\documentclass[12pt]{amsart}
\usepackage{graphicx}
\usepackage{amssymb}
\usepackage{amsmath}
\usepackage{epstopdf}
\usepackage{amsthm}
\usepackage{color}
\usepackage{enumerate}
\usepackage{wrapfig}
\usepackage{pifont}
\usepackage{hyperref}

\newtheorem{itheorem}{Theorem} 

\newtheorem{iconjecture}[itheorem]{Conjecture} 
\newtheorem{theorem}{Theorem}[section] 
\newtheorem{proposition}[theorem]{Proposition} 
\newtheorem{lemma}[theorem]{Lemma} 
\newtheorem{corollary}[theorem]{Corollary}


\newenvironment{ctheorem}[1]
  {\innerctheorem}
  {\endinnerctheorem}

\newenvironment{cconjecture}[1]
  {\innercconjecture}
  {\endinnercconjecture}

\newenvironment{cproposition}[1]
  {\innercproposition}
  {\endinnercproposition}

\newenvironment{ccorollary}[1]
  {\innerccorollary}
  {\endinnerccorollary}

\newtheorem{problem}{Problem}

\theoremstyle{remark}

\newtheorem{example}[theorem]{Example}
\theoremstyle{definition}
\newtheorem{definition}[theorem]{Definition} 
\newtheorem{definitions}[theorem]{Definitions} 
\newcommand{\ie}{i.e.,~} 

\def\cC{\mathcal{C}}
\def\cD{\mathcal{D}}

\def\cM{\mathcal{M}}

\def\cS{\mathcal{S}}
\def\cT{\mathcal{T}}

\def\cV{\mathcal{V}}


\def\RR{\mathbb{R}}
\def\R{\mathbb{R}}

\def\ZZ{\mathbb{Z}}
\def\Z{\mathbb{Z}}
\def\codeg{{\rm codeg}}
\def\deg{{\rm deg}}

\newcommand{\Gale}[1]{{#1}^{\star}} 
\newcommand{\bGale}[1]{\bar{#1}^{\star}} 

\newcommand{\veczero}{\boldsymbol{0}}


\def\conv{\mathrm{conv}}

\def\verts{\mathrm{vert}}
\def\aff{\mathrm{aff}\:}
\def\lin{\mathrm{lin}}

\def\rank{\mathrm{rank}\:}

\def\codeg{{\rm codeg}}
\def\deg{{\rm deg}}
\def\degc{{\deg}}
\def\codegc{\codeg}
\def\degG{\Gale{\deg}}
\def\codegG{\Gale{\codeg}}
\def\degZ{{\deg_{\ZZ}}}
\def\codegZ{{\codeg_{\ZZ}}}

\def\degreeG{{$\Gale{\text{degree}}$} }
\def\CayleyG{{Cayley$\Gale{}$} }

\def\dd{{\delta}}
\def\kk{{\kappa}}

\def\Ce{\cC}
\def\Tv{\cD}

\def\S{S}

\DeclareMathOperator{\Dep}{Dep}


\newcommand{\sprod}[2]{\langle {#1} , {#2} \rangle} 
\newcommand{\defn}[1]{\emph{#1}} 
\newcommand{\set}[2]{\ensuremath{\left\{#1\,\middle|\,#2\right\}}} 
\newcommand{\ffloor}[2]{\left\lfloor{\frac{#1}{#2}}\right\rfloor} 

\newcommand{\card}[1]{\vert {#1} \vert}

\newcommand{\ol}[1]{\overline{#1}}
\newcommand{\startproblem}{$\blacktriangleright$\,\,} 

\begin{document}

\title[The degree of point configurations]{The degree of point configurations:\\ Ehrhart theory, Tverberg points and almost neighborly polytopes}

\author{Benjamin Nill}
\address{Department of Mathematics, Stockholm University, SE - 10691 Stockholm, Sweden.}
\email{nill@math.su.se}
\author{Arnau Padrol}
\address{Institut f\" ur Mathematik, Freie Universit\"at Berlin,  Arnimallee 2, 14195 Berlin, Germany.}
\email{arnau.padrol@fu-berlin.de} 

\begin{abstract}
The degree of a point configuration is defined as the maximal codimension of its interior faces. This concept is
motivated from a corresponding Ehrhart-theoretic notion for lattice polytopes and is related to neighborly polytopes and the generalized lower bound theorem and, by Gale duality, to Tverberg theory.

The main results of this paper are a complete classification of point configurations of degree 1, as well as a structure result on point configurations whose degree is less than a third of the dimension. Statements and proofs involve the novel notion of a weak Cayley decomposition, and imply that the $m$-core of a set $\S$ of $n$ points in $\RR^r$ is contained in the set of 
Tverberg points of order $({3m-2(n-r)})$ of $\S$.
\end{abstract}


\maketitle
\section{Introduction and motivation}
\label{intro-sec}

\subsection{Introduction}
Consider the following three problems arising from different contexts.\smallskip
 
\startproblem Let $P$ be a lattice $d$-polytope (a polytope with vertices in the lattice $\Z^d$). The generating function enumerating the number of lattice points in 
multiples of $P$ is of the form:
\[\sum\limits_{k \geq 0} \card{(k P) \cap \Z^d} \, t^k =  \frac{h^*_P(t)}{(1-t)^{d+1}},\]
where the polynomial $h^*_P(t) = \sum_{i=0}^d h^*_i t^i$ is called the {\em $h^*$-polynomial} of~$P$ and its degree $\deg(h^*_P(t))$ is between $0$ and $d$.

\begin{problem}\label{prob:Ehrhart}
Classify the lattice $d$-polytopes whose $h^*$-polynomial's degree is bounded by a fixed constant.
\end{problem}

\startproblem A $d$-dimensional point configuration $A$ is \defn{$k$-almost neighborly}, if every subset of $A$ of size at most $k$ lies in a common face of $\conv(A)$, and it is \defn{$k$-neighborly}, if every subset of $A$ of size $\leq k$ is the vertex set of a face of $\conv(A)$.

A classical result states that if a $d$-dimensional point configuration is $k$-neighborly for any $k>\ffloor{d}{2}$, then it must be the vertex set of a $d$-dimensional simplex. What should be the analogous result for almost neighborly point configurations?

\begin{problem}\label{prob:AlmostNeighborly}
 Find structural constraints for $k$-almost neighborly point configurations when $k$ is small with respect to the dimension.
\end{problem}

\startproblem  Let $\S$ be a configuration of $n$ points in $\RR^r$. A point $x\in \RR ^r$ is a \defn{Tverberg point} of order $m$ (or \defn{$m$-divisible}), if there exist $m$ disjoint subsets $\S_1,\dots,\S_m$ of $\S$ such that $x\in \conv (\S_i)$ for $i=1,\dots m$. The set of Tverberg points of order $m$ of $\S$ is denoted by $\Tv_m(\S)$. Tverberg's Theorem asserts that $\Tv_m(\S)\neq \emptyset$ whenever $n\geq (m-1)(r+1)+1$, a bound that is tight. However, little is known about conditions that can ensure $\Tv_m(\S)\neq \emptyset$ even if $n< (m-1)(r+1)+1$.

A point $x\in \RR ^r$ is in the \emph{$m$-core} of $\S$, denoted by $\Ce_m(\S)$, if every closed halfspace containing $x$ also contains at least $m$ points of $\S$ (\ie $x$ is at \defn{halfspace depth} $m$). It is trivial to see that $\Tv_m(A)\subseteq\Ce_m(A)$, while usually $\Tv_m(A)\nsupseteq\Ce_m(A)$. 
\begin{problem}\label{prob:Tverberg}
What is the largest~$m'$ such that $\Ce_m(A)\subset \Tv_{m'}(A)$?
\end{problem}

While these problems might seem disconnected, they are actually strongly related. We will explain how they are linked and use the intuition of recent results concerning Problem~\ref{prob:Ehrhart} to provide partial answers for Problems~\ref{prob:AlmostNeighborly} and~\ref{prob:Tverberg}. 
We hope that this opens a two-way path between Ehrhart theory and geometric combinatorics, and that future advances on Problems~\ref{prob:AlmostNeighborly} and~\ref{prob:Tverberg} will also be used to improve our knowledge of Problem~\ref{prob:Ehrhart}.

Let us explain the relation between Problem~\ref{prob:Ehrhart} and Problem~\ref{prob:AlmostNeighborly} briefly. Given a lattice $d$-polytope $P$, the degree of $h^*(P)$ is given as $d-k$ where $k$ is the largest positive integer such that $kP$ has no interior lattice points. Now, here is our naive observation: this clearly implies that any set of $k$ lattice points in~$P$ has to lie in a common facet, since otherwise their sum would lie in the interior of $kP$. Therefore, $\ZZ^d\cap P$ is a $k$-almost-neighborly point configuration. Understanding constraints for almost neighborly configurations is a first step for understanding lattice polytopes of bounded Ehrhart $h^*$-degree.

Gale duality provides the translation between Problems~\ref{prob:AlmostNeighborly} and~\ref{prob:Tverberg}. Indeed, $k$-almost neighborly configurations correspond to configurations that contain the origin in their $(k-1)$-core, and vice versa. And as it  turns out, Tverberg points of order~$m$ are in correspondence with so-called weak Cayley decompositions of length~$m$, which is a central concept in our study of almost neighborly configurations.

Section~\ref{intro-sec} of this paper contains the summary of our main results and their interpretations in different contexts, in particular the relation with the problems stated above. The reader is encouraged to skim through it according to background and interest. At the center of our presentation is the notion of the degree of a point configuration. We hope to convince the reader that this is a natural and worthwhile invariant to study. 
In Section~\ref{sec:vectorconfigurations}, we introduce the \degreeG of a vector configuration (its dual counterpart), which is the language used for our proofs. We show the equivalence of the different formulations of our results. Their proofs are contained in Sections~\ref{sec:degk} and~\ref{sec:deg1}.   

\subsubsection*{Acknowledgements} 
The authors want to thank Aaron Dall and Julian Pfeifle for many stimulating conversations, and Alexander Esterov for sharing his ideas and insights. 
AP is supported by the DFG Collaborative Research Center SFB/TR~109 ``Discretization in Geometry and Dynamics'' as well as by AGAUR grant 2009 SGR 1040 and FI-DGR grant from Catalunya's government and the ESF. 
BN is supported by the US National Science Foundation (DMS 1203162).

\subsection{The main notions and results}\label{sec:mainnotions}

Let $A$ be a finite point configuration in $\RR^d$. We will always require that $A$ is full-dimensional (i.e., its affine span equals $\R^d$), and we will allow that $A$ contains repeated points.
We say that a non-empty subset $S\subset A$ is an \defn{interior face} of $A$, if $\conv(S)$ does not lie on the boundary of $\conv (A)$. 
Recall that a {\em facet} of a polytope is a codimension one face. Here are our main definitions:

\begin{definition}
 \label{def:combdegree}
The \defn{degree}, $\degc(A)$, is the maximal codimension of an interior face of~$A$. 
The \defn{codegree} of $A$ is given as $\codegc(A) := d+1-\degc(A)$ 
and equals the maximal positive integer $\kk$ such that every subset of $A$ of size $< \kk$ lies in a common facet of $\conv(A)$.

\end{definition}

In particular, $0\leq \degc(A)\leq d$; and we are interested in those configurations where $\degc(A)\ll d$. Examples of such configurations are $k$-fold pyramids, because $\degc(A)=\degc(A')$ whenever $A'$ is a pyramid over $A$ (see Corollary~\ref{cor:pyramid}). The following converse statement (that takes into account the number of points of the configuration) is proved in Section~\ref{sec:degk}.

\begin{ccorollary}{\ref{cor:easybound}}
Any $d$-dimensional configuration of $n$ points and degree~$\dd$ such that \(d \ge \dd + \frac{n-1}{2}\) is a pyramid.
\end{ccorollary}

For our next example of configurations of small degree, we need the following definition, inspired by the concept of a \defn{Cayley polytope} (see Section~\ref{sec:PrimalCayley}):

\begin{definition}\label{def:strongweakcayley}
A point configuration $A$ admits a \defn{weak Cayley decomposition} of length $m\ge1$, if there exists a partition $A = A_0\uplus A_1 \uplus \cdots \uplus A_m$, such that for any 
$1\leq i\leq m$, $A\setminus A_i$ is the set of points of a proper face of~$\conv (A)$.
The sets $A_1\dots A_m$ are called the \defn{factors} of the decomposition. 

While we allow~$A_0$ to be the empty set, the factors $A_1, \ldots, A_m$ have to be non-empty, because otherwise $\conv(A\setminus A_i)$ would not be a proper face of~$\conv(A)$.
\end{definition}

For example, vertex sets of Lawrence polytopes admit weak Cayley decompositions, because they are Cayley embeddings of zonotopes (see~\cite{HuberRambauSantos2000}). Proposition~\ref{prop:Lawrence} shows that their degree characterizes them. In general, any point configuration that admits a ``long'' weak Cayley decomposition has small degree. 

\begin{cproposition}{\ref{prop:degree-cayley}}
If $A\subset \RR^d$ admits a weak Cayley decomposition of length~$m$, then $\degc(A)\leq d+1-m$.
\end{cproposition}

One of our main results is a converse statement to this proposition:

\begin{itheorem}\label{thm:d+1-3dd}
Any $d$-dimensional point configuration with degree $\dd<\frac{d}{3}$ 
admits a weak Cayley decomposition of length at least $d-3\dd+1$.
\end{itheorem}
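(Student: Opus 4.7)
My plan is to pass to the Gale dual and reinterpret the statement in Tverberg-theoretic terms. Under Gale duality, a subset $S\subseteq A$ lies in a proper face of $\conv A$ if and only if $0\in\conv(V|_{A\setminus S})$, where $V=\Gale{A}\subset\RR^{n-d-1}$ is the Gale dual. A weak Cayley decomposition of $A$ of length $m$ is therefore the same data as a Tverberg partition of $V$ at the origin of order $m$: pairwise disjoint nonempty subsets $B_1,\dots,B_m\subseteq V$ with $0\in\conv(B_i)$ for every $i$. Via the correspondence developed in Section~\ref{sec:vectorconfigurations}, the hypothesis $\deg(A)=\dd$ is equivalent to the origin having halfspace depth at least $d-\dd+1$ in $V$, because a subset $S\subseteq A$ of size $\leq d-\dd$ lies in a common facet iff removing any $\leq d-\dd$ vectors from $V$ keeps $0$ in the convex hull. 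The arithmetic $3(d-\dd+1)-2(n-r)=d-3\dd+1$ (with $r=n-d-1$) then shows Theorem~\ref{thm:d+1-3dd} to be equivalent to the $m$-core/Tverberg inclusion $\Ce_{d-\dd+1}(V)\ni 0\in\Tv_{d-3\dd+1}(V)$ announced in the abstract.

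My strategy to build the Tverberg partition is iterative extraction of positive circuits. Set $V^{(0)}=V$; at step $k$ pick a positive circuit $C_k\subseteq V^{(k-1)}$ (a minimal subset of the residual with $0\in\conv(C_k)$), set $V^{(k)}=V^{(k-1)}\setminus C_k$, and continue as long as $0\in\conv(V^{(k)})$. The number of circuits extracted equals the Tverberg order achieved. Two quantities must be tracked simultaneously: the halfspace depth of $0$ in $V^{(k)}$, which starts at $d-\dd+1$ and must stay positive for the next circuit to exist, and the size of each extracted circuit $|C_k|$, which the small-degree hypothesis allows us to bound in terms of $\dd$ using the Gale-degree machinery.

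The main obstacle is controlling how the small-degree structure is inherited by the residuals $V^{(k)}$. The key intermediate lemma I would establish is that under the small-degree hypothesis each positive circuit has size at most $r+1+O(\dd)$ and each extraction reduces the halfspace depth of $0$ by at most one, so that each iteration costs at most three units in the combined (halfspace-depth, circuit-overhead) budget. This tight accounting is precisely where the constant $3$ in $d-3\dd+1$ is forced: starting with depth budget $d-\dd+1$ and absorbing up to $3\dd$ units of slack for circuit overhead across all iterations yields at least $d-3\dd+1$ successful extractions. Establishing this per-iteration cost precisely --- probably through the Gale-degree circuit characterizations of Section~\ref{sec:vectorconfigurations}, together with an inductive argument on halfspace depth that mirrors the pyramid peeling in Corollary~\ref{cor:easybound} --- is the technical heart of the proof.
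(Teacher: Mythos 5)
Your Gale-dual reformulation is correct and is exactly the paper's setup (Propositions~\ref{prop:primaldual} and~\ref{prop:dualtotverberg}), but the iterative-extraction strategy differs from the paper's proof and has a genuine gap precisely in the step that does the work. The claim that each extraction reduces the halfspace depth of $0$ by at most one is unestablished and is false for arbitrary circuit choices: removing a positive circuit $C$ can lower $\codegG$ by up to $|C|-1$ in a single step (for $h$ not containing $C$, Lemma~\ref{lem:poscircuithyperplane} only bounds $|\ol{h}^+\cap C|\le |C|-1$, and $|C|$ can be as large as $r+1$). Concretely, $V=\{e_1,e_2,e_3,-e_1,-e_2,-e_3,e_1+e_2+e_3\}\subset\RR^3$ has $\codegG(V)=3$, yet extracting the positive circuit $\{-e_1,-e_2,-e_3,e_1+e_2+e_3\}$ leaves three linearly independent vectors with $\codegG=0$, a drop of three. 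So you would need to prove that a \emph{good} circuit can always be chosen at each step, and that is exactly what is missing. The budget arithmetic is also off: depth $d-\dd+1$ minus a total overhead of $3\dd$ gives $d-4\dd+1$, not $d-3\dd+1$.

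The paper's proof avoids per-step depth tracking entirely. After reducing to $V$ totally cyclic and pure, it inducts on the degree $\dd$ rather than on the number of circuits extracted: for any hyperplane $h$ spanned by vectors of $V$, set $W=V\cap h$. Purity forces $\degG(V/W)\ge 1$ (Lemma~\ref{lem:puredeggeq1}), so the additivity inequality of Proposition~\ref{prop:subspacequotient} gives $\degG(W)\le\dd-1$, and a short computation using Proposition~\ref{prop:easybound} shows that the invariant $d_{W}-3\dd_{W}\ge d-3\dd$ is preserved. The base case $\dd=0$ (Corollary~\ref{cor:deg0}) identifies the configuration as a direct sum of positive circuits and produces all $d+1$ factors at once. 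In other words, the paper locates a degree-$0$ subconfiguration inside a descending chain of hyperplanes and reads off all the circuits from that, rather than peeling them off $V$ one at a time; all the extracted circuits end up lying in a common hyperplane, which is why the vectors outside it can simply be ignored. Your approach, if the missing circuit-selection lemma could be supplied, might plausibly improve the constant toward the conjectured $d-2\dd+1$, but as written it does not prove the stated theorem.
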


For configurations of degree~$\leq 1$, this result can be strengthened. First of all,  Proposition~\ref{prop:deg0} shows that vertex sets of $d$-simplices are the only configurations with $\degc(A)=0$ (up to repeated points). 
This means that the first interesting configurations have $\degc(A)=1$, such as those depicted in Figure~\ref{fig:exdeg1}. In Section~\ref{sec:deg1} we provide a complete classification of these configurations.

\begin{itheorem}\label{thm:deg1}
Let $A$ be a $d$-dimensional configuration of $n$ points. Then $\degc(A)\leq1$ if and only if one of the following holds (up to repeated points)
\begin{enumerate}
\item\label{it:dd=1:d=1} $d\leq 1$; 
or
\item\label{it:dd=1:d=2} $d \ge 2$ and $A$ is a $k$-fold pyramid over a two-dimensional point configuration without interior points; 
or
\item\label{it:dd=1:prism} $d \ge 3$ and $\conv(A)$ is a $k$-fold pyramid over a prism over a simplex with the non-vertex points of $A$ all on the ``vertical'' edges of the prism; or 
\item\label{it:dd=1:simplex} $d \ge 3$ and $\conv(A)$ is a simplex with all non-vertex points of $A$ on the edges adjacent to a vertex $a$ of $\conv(A)$.
\end{enumerate}
\end{itheorem}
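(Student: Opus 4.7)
The plan is, after verifying the easy direction, to argue by induction on $d$, reducing to non-pyramidal configurations and then invoking Theorem~\ref{thm:d+1-3dd} to extract their structure. For the easy direction, in each of (1)--(4) I would check the equivalent formulation that every subset of $A$ of cardinality $\le d-1$ lies in a common facet of $\conv(A)$. Case (1) is immediate. Case (2) follows from Corollary~\ref{cor:pyramid} (pyramids preserve degree), together with the observation that in dimension two the only possible codimension-$2$ interior face would be an interior point. In cases (3) and (4), the relevant facets of $\conv(A)$ are naturally indexed (by the vertical edges of the prism, respectively by the vertices of the simplex), and for any $S \subset A$ with $|S| \le d-1$ a pigeonhole on this index set produces a facet containing $S$.

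For the hard direction I would induct on $d$. The bases $d \le 1$ and $d = 2$ are direct: for $d = 2$, an interior point would give a codimension-$2$ interior face, forcing $A$ to lie in case (2). For $d \ge 3$, the key dichotomy is whether $A$ is a pyramid. If so, with apex $v$, then $A' := A \setminus \{v\}$ has $\deg(A') = \deg(A) \le 1$ in dimension $d-1$ by Corollary~\ref{cor:pyramid}; the inductive hypothesis places $A'$ in one of (1)--(4), and each of these cases is closed under pyramidal extension.

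It remains to treat non-pyramidal $A$ in dimensions $d \ge 3$. For $d \ge 4$, Theorem~\ref{thm:d+1-3dd} with $\dd = 1$ yields a weak Cayley decomposition $A = A_0 \uplus A_1 \uplus \cdots \uplus A_m$ with $m \ge d-2$. The non-pyramidal hypothesis forces $|A_i| \ge 2$ for every $i \ge 1$, since a singleton factor $\{v\}$ with $A \setminus \{v\}$ a proper face would place $v$ outside $\aff(A \setminus \{v\})$, making $v$ an apex. The main structural step is then to show that each $A_i$ consists of exactly two vertices of $\conv(A)$ joined by an edge, together with possibly extra points on that edge; this uses the codegree condition (every $<d$ points lie in a common facet) jointly with the weak Cayley constraint that each point of $A_i$ lies on $\conv(A \setminus A_j)$ for every $j \ne i$. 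Once this rigidity is established, $A_0 = \emptyset$ produces the prism structure of (3), while $A_0 \ne \emptyset$ pins a distinguished vertex $v_0$ with every factor of the form $\{v_i\} \cup \{\text{extras on } [v_0, v_i]\}$, giving (4). The base case $d = 3$, outside the range of Theorem~\ref{thm:d+1-3dd}, is handled directly: using that every pair of points of $A$ lies on a common facet and that $A$ is non-pyramidal, one enumerates the few possibilities for $\conv(A)$ and verifies that it must be combinatorially either a triangular prism or a tetrahedron, with extras placed as in (3) or (4) respectively.

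The main obstacle, as I see it, is the rigidity statement for the $A_i$'s in the non-pyramidal inductive step: the weak Cayley decomposition controls the ambient face $\conv(A \setminus A_i)$ but leaves the internal geometry of $A_i$ open, so forcing each $A_i$ to span only an edge of $\conv(A)$ requires invoking the degree-$1$ hypothesis repeatedly in order to rule out the interior faces of small dimension that more complicated $A_i$'s would inevitably create.
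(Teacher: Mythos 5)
Your plan for the easy direction is fine (the pigeonhole over the $d$ factors does the job in cases (3) and (4)), and reducing to irreducible (non-pyramid) configurations via Corollary~\ref{cor:pyramid} is also the right first move. But the core of the hard direction has a genuine gap that the rigidity discussion does not address: Theorem~\ref{thm:d+1-3dd} applied with $\dd=1$ produces a weak Cayley decomposition of length only $d-3\cdot 1+1 = d-2$, whereas the classification you are trying to reach---a prism over a $(d-1)$-simplex, or a $d$-simplex with extras on the edges at a vertex---has a weak Cayley decomposition of length exactly $d$. So even if you could establish that each of the $d-2$ factors provided by Theorem~\ref{thm:d+1-3dd} spans a single edge, you are still two factors short, and nothing in the proposal explains how to extract two additional disjoint edges (or, equivalently, two additional disjoint positive circuits in the Gale dual) from what remains in $A_0$. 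Upgrading $d-2$ to $d$ is precisely the nontrivial content of the theorem, and it cannot be obtained by quoting Theorem~\ref{thm:d+1-3dd}.

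The paper does not deduce Theorem~\ref{thm:deg1} from Theorem~\ref{thm:d+1-3dd} at all. It proves the Gale-dual statement, Theorem~\ref{thm:deg1_dual}, by a fresh induction on the rank $r$ of the vector configuration (with $d$ fixed). After reducing to a totally cyclic, irreducible, pure configuration, the base case $r=d-1$ is Proposition~\ref{prop:Lawrence}: the configuration must be centrally symmetric, so antipodal pairs give $d$ disjoint positive circuits. The inductive step contracts along a vector $v$ with $\lin(v)\cap V=\{v\}$, lifts the $d$ factors $\tilde C_i$ of $V/v$ to circuits $C_i$ of $V$, and then shows that at most one $C_i$ can contain $v$. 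The machinery that makes this work is a collection of structural facts about circuits in pure degree-$1$ configurations---Proposition~\ref{prop:nosmallcircuits} (circuits of size $\le r$ are positive), Corollary~\ref{cor:disjointcircuits} (small circuits are disjoint), and Lemma~\ref{lem:onlysmallcircuits} (the only small circuits are the factors)---together with the composition-of-hyperplanes argument at the end. Those circuit lemmas, rather than any analogue of the "rigidity" you describe, are what carry the proof; a workable primal version of your approach would have to rediscover and repackage them, and in particular would need a mechanism to manufacture the two missing factors, which Theorem~\ref{thm:d+1-3dd} alone does not supply.
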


\begin{figure}[htpb]
\centering
	\includegraphics[width=.2\textwidth]{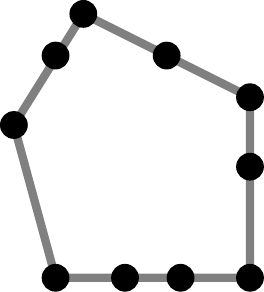}\qquad\qquad\includegraphics[width=.2\textwidth]{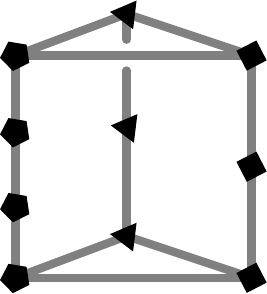}\qquad\qquad\includegraphics[width=.2\textwidth]{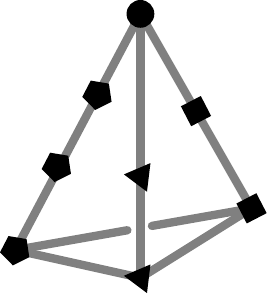}
\caption{Three point configurations of degree $1$ (weak Cayley decompositions indicated with point shapes).}\label{fig:exdeg1}
\end{figure}

The reader may have noticed that this classification implies that if $\degc(A)=1$, then $A$ has a weak Cayley decomposition of length at least $d-1$ (and of length $d$ if $d>2$). This observation (among others, as will be explained below) motivates our main conjecture:

\begin{iconjecture}\label{conj:d+1-2dd}
Any $d$-dimensional point configuration of degree $\dd<\frac{d}{2}$ admits a weak Cayley decomposition of length at least $d+1-2\dd$. 
\end{iconjecture}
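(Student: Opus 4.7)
The plan is to work by strong induction on the dimension~$d$, using the Gale dual language of Section~\ref{sec:vectorconfigurations}: in those terms a weak Cayley decomposition of length~$m$ becomes a partition of~$\Gale{A}$ into~$m$ classes each of which positively spans, and the degree bound translates into a constraint on the positive circuits of~$\Gale{A}$. The base cases~$\dd=0$ and~$\dd=1$ are already supplied by Proposition~\ref{prop:deg0} (which forces a~$d$-simplex, hence a length-$(d+1)$ Cayley decomposition) and by the classification in Theorem~\ref{thm:deg1}; in each of its cases one reads off a decomposition of length at least~$d-1=d+1-2\dd$, either from the extra apices of a pyramid (singleton factors) or from the $d$ pairs of ``vertical'' vertices of a pyramid over a prism over a simplex.

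For the inductive step, assume~$d\ge 2\dd+1$ and let~$A$ be~$d$-dimensional of degree~$\dd$. The pyramid reduction is immediate: if~$a$ is an apex of a pyramid structure on~$A$ then~$\{a\}$ is a factor, the base is~$(d-1)$-dimensional of degree~$\dd$ by Corollary~\ref{cor:pyramid}, and induction applied to the base produces the remaining~$d-2\dd$ factors. In the non-pyramid case, Corollary~\ref{cor:easybound} guarantees~$n\ge 2(d-\dd)+2$, which leaves enough room in~$\Gale{A}$ to locate a minimal positive circuit~$A_1\subset A$ whose complement spans a facet~$F$ of~$\conv(A)$. The strategy is then to peel off~$A_1$, pass to the face configuration~$A'=A\cap F$ (equivalently, contract by~$A_1$ on the Gale side), apply induction to~$A'$ of dimension~$d-1$ to produce a decomposition of length at least~$d-2\dd$, and prepend~$A_1$ to reach the desired~$d+1-2\dd$. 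An integrality edge case arises when~$\dd=(d-1)/2$, where the inductive hypothesis does not literally apply to~$A'$, and must be treated directly—conveniently, the target length then collapses to~$2$ and the existence of~$A_1$ already suffices.

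The main obstacle—and the reason the conjecture is not already a corollary of Theorem~\ref{thm:d+1-3dd}—is the control of the degree under this reduction. For the argument above to close, one needs~$\degc(A')\le\dd$, i.e.\ the passage to the facet~$F$ must not raise the degree; but there is no general reason the degree cannot jump at each step, and the proof of Theorem~\ref{thm:d+1-3dd} seems to accommodate precisely such a trade-off, which is what limits it to~$\dd<d/3$. To close the gap to~$\dd<d/2$ I expect one will need either (i)~a smarter choice of~$A_1$—for instance one whose Gale dual is a positive circuit generic with respect to the interior faces of maximum codimension of~$A$—guaranteeing no degree jump, or (ii)~a strengthened inductive hypothesis tracking, besides~$\dd$, a secondary invariant such as the codimension of the minimal interior face or a refined count of positive circuits in~$\Gale{A}$, chosen so that the numerology of~$d$ versus~$2\dd$ is preserved at every step. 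Phrasing the right such invariant is where I expect the real content of the proof to lie.
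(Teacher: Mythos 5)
The statement you are attempting is Conjecture~\ref{conj:d+1-2dd}, which the paper leaves open (Example~\ref{ex:sharpconj} shows the bound would be sharp); the paper proves only the weaker Theorem~\ref{thm:d+1-3dd} for $\dd<d/3$, so there is no proof to compare against and you are right to present this as a plan rather than a completed argument. Still, you have misdiagnosed where your induction breaks. The worry you flag---that passing to the facet configuration $A'=A\cap F$ might raise the degree---is a non-issue: since $A'\subset A$, Corollary~\ref{cor:degpointdeletioncontraction} gives $\degc(A')\le\degc(A)=\dd$ automatically, and the integrality edge case $\dd=(d-1)/2$ can be handled exactly as you say.

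The genuine obstacle is the \emph{lifting} step, which you pass over in silence: a weak Cayley decomposition of $A'$ does not lift to one of $A$. If $A_i\subset A'$ has $A'\setminus A_i$ lying on a proper face of the facet $F$, the set $A\setminus A_i=(A'\setminus A_i)\cup A_1$ still contains all of $A_1$, which sits off $F$, so there is no reason for it to lie on any proper face of $\conv(A)$. Dually, contracting to $V/V_1=\Gale{A'}$ destroys the circuit correspondence: a positive circuit $\tilde C$ of $V/V_1$ expands only to some circuit $C$ of $V$ with $C\setminus V_1=\tilde C$, and $C$ may meet $V_1$ or fail to be positive, so distinct positive circuits of $V/V_1$ need not yield disjoint positive circuits of $V$. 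This lifting problem is precisely the hard content of the paper's proof of Theorem~\ref{thm:deg1_dual}, where it is managed only for contraction by a single vector and only for $\dd=1$, via Proposition~\ref{prop:nosmallcircuits}, Corollary~\ref{cor:disjointcircuits} and Lemma~\ref{lem:onlysmallcircuits}. The paper's proof of the weaker Theorem~\ref{thm:d+1-3dd_dual} sidesteps lifting entirely by descending not to a quotient but to a hyperplane subconfiguration $W=V\cap h$, where positive circuits of $W$ are automatically positive circuits of $V$---but that descent is exactly what costs the extra $\dd$ in the numerology and limits the result to $\dd<d/3$. A secondary invariant tracking the degree, as you propose, will not close this gap; what is missing is a way to control how circuits expand under contraction.
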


The conjectured bound (if correct) is
sharp by Example~\ref{ex:sharpconj}, which shows that the join of $k$ pentagons is a configuration of degree~$\dd=k$ in dimension $d=3k-1$ that does not admit any weak Cayley decomposition of length larger than $d+1-2\dd=k$.

\subsection{Core and Tverberg points}\label{sec:tverberg}

Let $\S$ be a configuration of $n$ points in $\RR^r$. Recall the definitions of $\Ce_m(\S)$ and $\Tv_m(\S)$ from the statement of Problem~\ref{prob:Tverberg}. 
That is, $x\in \Ce_m(\S)$ if every closed halfspace containing $x$ also contains at least $m$ points of $\S$; and $x\in \Tv_m(\S)$ if $x$ belongs to the convex hull of $m$ disjoint subsets of $\cS$. An example is depicted in Figure~\ref{fig:CoreAndDivisible}.

\begin{figure}[htpb]
	\centering
	\begin{tabular}{ccc}
\includegraphics[width=.3\textwidth]{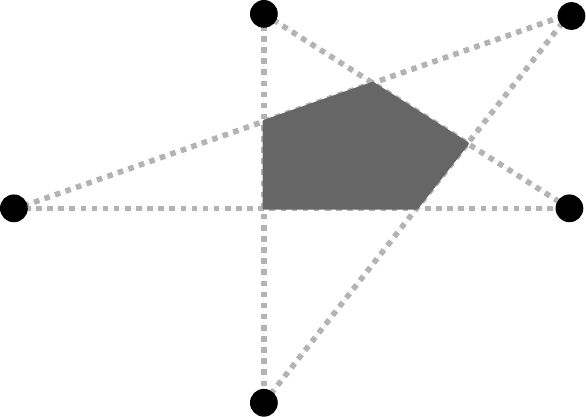}
&\qquad\qquad&
\includegraphics[width=.3\textwidth]{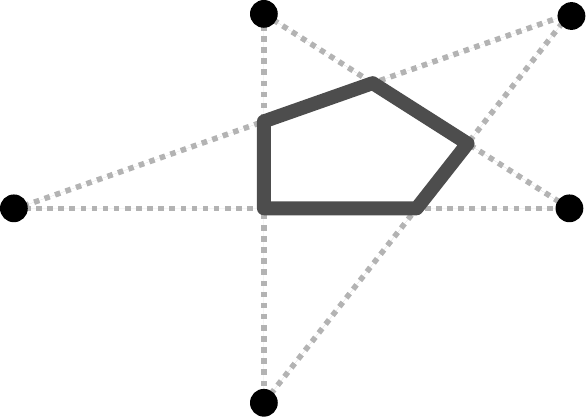}\\
$\Ce_2(\S)$&\qquad\qquad&$\Tv_2(\S)$
	\end{tabular}
\caption{When $\S$ is the vertex set of a pentagon, $\Ce_2(\S)$ is the inner pentagon delimited by the interior diagonals, while $\Tv_2(\S)$ is only the boundary of this inner pentagon.}\label{fig:CoreAndDivisible}
\end{figure}

It is easy to see that $\conv(\Tv_m(\S))\subset \Ce_m(\S)$. Equality was conjectured \cite{Reay1982,Sierksma1982}, and actually holds when $d=2$ or $m=1$. However, Avis found a counterexample for $n=9$, $d=3$ and $m=3$~\cite{Avis1993}, and Onn provided a systematic construction for counterexamples~\cite{Onn2001}.

Tverberg's Theorem asserts that whenever $n\geq (m-1)(r+1)+1$ then $\Tv_m(\S)\neq \emptyset$  (see \cite[Chapter 8]{MatousekLecturesDiscreteGeometry}). In~\cite{Kalai2000}, Kalai asked for conditions that can guarantee that $\Tv_m(\S)\neq \emptyset$ even if $n< (m-1)(r+1)+1$.

As we will explain in Section~\ref{sec:relationtotverberg}, there is a direct correspondence between weak Cayley decompositions and Tverberg points, as well as between the codegree and core points. With it, the proof of Theorem~\ref{thm:d+1-3dd} directly yields the following result, which implies that whenever certain (deep) core points exist,  the set of Tverberg points is also not empty.

\begin{ctheorem}{\ref{thm:d+1-3dd}$_{\text{T}}$}\label{thm:CoreIsDivisible}\label{thm:d+1-3dd_tverberg}
$\Ce_m(\S)\subset \Tv_{3m-2(n-r)}(\S)$.
\end{ctheorem}

Note that this result is only non-trivial if $m > \frac23(n-r)$. On the other hand, $\Ce_m(\S) \not= \emptyset$ implies $m \le n-r$. 
Hence, Theorem~\ref{thm:CoreIsDivisible} is of interest for configurations that admit points in some $m$-core with a relatively large $m$.
In this context our main conjecture, Conjecture~\ref{conj:d+1-2dd}, is equivalent to $\Ce_m(\S)\subset \Tv_{2m-(n-r)}(\S)$.

\subsection{Polytopes, point configurations and triangulations}

\subsubsection{Almost neighborly polytopes}

Recall that a $d$-polytope $P$ is \defn{$k$-neighborly}, if every subset of vertices of $P$ of size $\leq k$ is the set of vertices of a face of $P$. 
The following well-known result (see, for example~\cite[Chapter 7]{Gruenbaum}) motivates the definition of a $d$-polytope as \defn{neighborly}, if it is $\ffloor{d}{2}$-neighborly.

\begin{theorem}\label{thm:kneighd/2}
If a $d$-polytope $P$ is $k$-neighborly for any $k>\ffloor{d}{2}$, then~$P$ must be the $d$-dimensional simplex.
\end{theorem}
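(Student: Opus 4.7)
The plan is to prove the contrapositive: assume $P$ has at least $d+2$ vertices (so is not the $d$-simplex) and show it cannot be $k$-neighborly for any $k > \ffloor{d}{2}$. The key idea combines Radon's theorem with the defining property of a proper face.

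First I would select an arbitrary subset $V$ of $d+2$ vertices of $P$. Since $|V| > d+1$, Radon's theorem provides a partition $V = S_1 \sqcup S_2$ with $\conv(S_1) \cap \conv(S_2) \neq \emptyset$. Relabelling if necessary so that $|S_1| \leq |S_2|$, one has
\[
|S_1| \;\leq\; \ffloor{d+2}{2} \;=\; \ffloor{d}{2}+1 \;\leq\; k,
\]
where the middle equality is an easy parity check valid for both even and odd $d$. Invoking $k$-neighborliness then produces a proper face $F$ of $\conv(P)$ whose vertex set is exactly $S_1$.

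Next I would extract a contradiction by choosing any point $x \in \conv(S_1) \cap \conv(S_2)$. On one hand $x \in \conv(S_1) \subseteq F$; on the other hand $x$ is a convex combination $x = \sum_{s\in S_2} \lambda_s s$ with nonnegative coefficients summing to one. The face property of $F$, i.e.\ that $F$ is the intersection of $\conv(P)$ with a supporting hyperplane, forces every vertex appearing with positive weight to lie on $F$, hence in $S_1 = \verts(F)$. Since $S_1\cap S_2=\emptyset$, this is impossible, finishing the argument. I do not anticipate a real obstacle: the proof rests only on Radon's theorem and the standard separating-hyperplane characterization of faces, together with the clean parity identity above.
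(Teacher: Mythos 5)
Your argument is correct, and it is essentially the classical textbook proof: the paper itself does not prove Theorem~\ref{thm:kneighd/2} but cites Gr\"unbaum, Chapter~7, where the same Radon-partition argument appears. Your parity computation $\lfloor (d+2)/2\rfloor = \lfloor d/2\rfloor + 1 \le k$ is right, and the final step correctly uses the supporting-hyperplane characterization of faces to force a vertex of $S_2$ into $\verts(F)=S_1$, contradicting disjointness (note that $F$ is automatically a proper face since $|S_1| < d+2 \le |\verts(P)|$).
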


Neighborly polytopes are a very important
family of polytopes because of their extremal properties (see~\cite[Sections~9.4]{OrientedMatroids1993}, \cite[Chapter~7]{Gruenbaum}). 
In the definition of $k$-neighborly, one can relax the condition of being the set of vertices of a face by belonging to the set of vertices of a facet.
This concept can be generalized to point configurations (not necessarily in convex position), and gives rise to the definition \defn{$k$-almost neighborly point configuration} as in Problem~\ref{prob:AlmostNeighborly}.
The name `almost neighborly' was coined by Gr\"unbaum in~\cite[Exercices 7.3.5 and 7.3.6]{Gruenbaum}. According to him, this notion was already considered by Motzkin under the name of \emph{$k$-convex sets}~\cite{Motzkin1965}. In \cite{Breen1972} Breen proved that a point configuration is $k$-almost neighborly if and only if all its subconfigurations of size $\leq2d+1$ are. 

In our notation, a configuration $A$ is $k$-almost neighborly if and only if $\codegc(A)>k$. In particular,
Theorem~\ref{thm:deg1} classifies $(d-1)$-almost neighborly point configurations, and Corollary~\ref{cor:easybound} states that any $k$-almost neighborly point configuration with less than $2(k+1)$ points must be a pyramid.
Moreover, Theorem~\ref{thm:d+1-3dd} gives an explicit structure result for $k$-almost neighborly point configurations with $k > \frac{2}{3} d$ in terms of weak Cayley decompositions. Our main conjecture, Conjecture~\ref{conj:d+1-2dd}, would extend this to $k > \frac{d}{2}$. Hence, this can be seen as a potentially precise analogue of Theorem~\ref{thm:kneighd/2} for almost neighborly point configurations.

The concept of almost neighborliness is related to the concept of weakly neighborliness~\cite{Bayer1993}. In particular, in~\cite[Theorem 15]{Bayer1993} Bayer already classified $3$-dimensional polytopes $P$ with $\degc(\verts(P))=1$ as prisms over simplices and pyramids over polygons.

\subsubsection{The Generalized Lower Bound Theorem}
\label{sec:triang}

Let $\cT$ be a $(d-1)$-dimensional simplicial complex, and $f_i(\cT)$ denote the number of $i$-dimensional faces of $\cT$. Then the numbers $h_i(\cT)$ are defined by the polynomial relation \[    \sum_{i=0}^d h_i(\cT)\, t^i= \sum_{i=0}^ d f_{i-1}(\cT)\, t^i \, (1-t)^{d-i}.\]
This polynomial is called the \defn{$h$-polynomial} $h_\cT(t)$ of $\cT$.
 
By the famous $g$-theorem \cite{BilleraLee81,Stanley1980}, $h$-polynomials of the boundary complex of simplicial $d$-polytopes ${P}$ are completely known. In particular, $h_{\partial {P}}(t)$ has degree $d$, it satisfies the Dehn-Sommerville equations $h_i(\partial {P}) = h_{d-i}(\partial {P})$, and it is unimodal (\ie $h_{i}(\partial {P}) \ge h_{i-1}(\partial {P})$ for all $1\le i \le \lfloor d/2\rfloor$). In 1971, McMullen and Walkup~\cite{McMullenWalkup1971} posed the following famous conjecture regarding its unimodality, which is now known as the Generalized Lower Bound Theorem:

\begin{theorem}[Generalized Lower Bound]\label{thm:glbt}
Let ${P}$ be a simplicial $d$-polytope. For $i \in \{1, \ldots, \lfloor d/2\rfloor\}$, 
\begin{enumerate}[(i)]
 \item $h_{i}(\partial {P}) \geq h_{i-1}(\partial {P})$; and
 \item $h_{i}(\partial {P}) = h_{i-1}(\partial {P})$ if and only if ${P}$ can be triangulated without interior faces of dimension $\leq d-i$.
\end{enumerate}
\end{theorem}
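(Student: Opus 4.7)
The plan is to prove the Generalized Lower Bound Theorem by combining the hard Lefschetz theorem for simplicial polytopes (for the inequality in (i)) with a finer analysis of when equality holds in (ii). The principal algebraic object is the Stanley--Reisner face ring $k[\partial P]$ of the boundary complex, which is Cohen--Macaulay since $\partial P$ is a simplicial sphere. The proof combines classical $g$-theorem machinery with the more recent geometric/algebraic rigidity arguments of Murai--Nevo.

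For part~(i), I would choose a linear system of parameters $\Theta = \theta_1, \ldots, \theta_d \in k[\partial P]_1$ and set $A = k[\partial P]/(\Theta)$, so that $\dim_k A_j = h_j(\partial P)$ by Cohen--Macaulayness. The hard Lefschetz theorem, established by Stanley via the cohomology of the associated projective toric variety for rational simplicial polytopes (and extended to the general simplicial case by McMullen's polytope algebra or a limiting argument from rational polytopes), guarantees a linear form $\omega \in A_1$ such that multiplication by $\omega^{d-2j}\colon A_j \to A_{d-j}$ is an isomorphism for $j \le \lfloor d/2 \rfloor$. Factoring this map through $A_{j+1}, \ldots, A_{d-j-1}$ forces $\omega\colon A_{i-1} \to A_i$ to be injective for $i \le \lfloor d/2 \rfloor$, which yields $h_{i-1}(\partial P) \le h_i(\partial P)$.

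For part~(ii), the direction ``$\Leftarrow$'' follows from a careful comparison of the $h$-polynomial of a triangulation $T$ of $P$ with that of its boundary $\partial T = \partial P$. Using Stanley's formula relating $h_j(T)$ to $h_j(\partial P)$ through the interior face numbers (or, equivalently, the Dehn--Sommerville-type identities for relative $h$-polynomials of simplicial balls), the hypothesis that $T$ has no interior faces of dimension $\le d-i$ translates directly into the vanishing of the differences $h_j(T) - h_j(\partial P)$ for the relevant indices, which in turn yields $h_i(\partial P) = h_{i-1}(\partial P)$. For the direction ``$\Rightarrow$'' I would follow the Murai--Nevo strategy: if $h_i = h_{i-1}$, then by the Lefschetz argument above $\omega\colon A_{i-1} \to A_i$ must be a bijection, and this rigidity transfers, via the correspondence between squarefree modules and simplicial complexes, into strong geometric constraints on $\partial P$. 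Producing the required triangulation $T$ of $P$ then amounts to exhibiting a sequence of stellar-type subdivisions whose combinatorics is controlled by the (now trivial) cokernels appearing in the Lefschetz chain, and checking that none of them introduce interior faces of dimension $\le d-i$.

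The main obstacle is clearly the ``$\Rightarrow$'' direction of part~(ii). Part~(i) is already deep in that it reduces to hard Lefschetz, but the equality characterization was only resolved by Murai and Nevo and requires a substantial additional input beyond the $g$-theorem: namely, a delicate study of the minimal free resolution of $k[\partial P]/(\Theta,\omega)$ together with a geometric realization argument that converts purely algebraic equalities into an actual triangulation of $P$. Establishing the hard Lefschetz theorem itself in the non-rational case, needed for the full generality of part~(i), is another nontrivial step where I would rely on either McMullen's polytope algebra or a deformation argument from rational polytopes.
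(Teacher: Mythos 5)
The paper does not prove Theorem~\ref{thm:glbt}; it is quoted as a known result, with part~(i) attributed to Stanley~\cite{Stanley1980} (as part of the $g$-theorem) and part~(ii) to Murai and Nevo~\cite{MuraiNevo2012}. So there is no in-paper proof to compare against, and I will evaluate your sketch on its own merits as an outline of the known arguments.

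Your sketch of part~(i) via hard Lefschetz is correct in outline: injectivity of $\omega\colon A_{i-1}\to A_i$ for $i\le\lfloor d/2\rfloor$ follows by factoring the Lefschetz isomorphism $\omega^{\,d-2(i-1)}\colon A_{i-1}\to A_{d-i+1}$ through $A_i$, and the non-rational case is handled by McMullen's polytope algebra (or, nowadays, by the anisotropy arguments of Adiprasito and of Papadakis--Petrotou). The ``$\Leftarrow$'' direction of part~(ii) is likewise standard and already appears in McMullen--Walkup~\cite{McMullenWalkup1971}: comparing the $h$-vector of a triangulation $T$ of $P$ (a simplicial $d$-ball) with that of $\partial T=\partial P$ via the Dehn--Sommerville-type relations for balls, the absence of interior faces of dimension $\le d-i$ forces $g_i(\partial P)=0$.

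The ``$\Rightarrow$'' direction, which is the genuinely new content of Murai--Nevo, is where your sketch goes astray. Their construction is \emph{not} a sequence of stellar-type subdivisions, and such a sequence would not work here: stellar subdivisions introduce new vertices, whereas the triangulation $T$ of the GLBT uses only the vertices of $P$; and even combinatorially there is no reason that $T$ should arise from a controlled chain of local moves. What Murai--Nevo actually do is write down the candidate triangulation in one shot as the complex of all subsets $F$ of the vertex set of $\partial P$ whose skeleton (up to the appropriate dimension) is contained in $\partial P$, and then prove --- via generic initial ideals and algebraic shifting, a much finer rigidity statement than bijectivity of a single Lefschetz map $\omega\colon A_{i-1}\to A_i$ --- that this abstract complex is a simplicial $d$-ball with boundary $\partial P$, that it is geometrically realized by $P$, and that it has the required interior-face vanishing. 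Your phrase about ``the (now trivial) cokernels appearing in the Lefschetz chain'' does not supply this: triviality of the cokernel at one degree is precisely the hypothesis $g_i=0$, and the whole content of the theorem is that this single numerical equality has such strong structural consequences. You name the right tools (rigidity, resolutions/shifting), but the passage from the algebra to the concrete triangulation of $P$ is the missing, and very substantial, step.
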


The first part of the conjecture was solved by Stanley in 1980, as a part of the proof of the $g$-theorem~\cite{Stanley1980}. The second part of the conjecture had remained open until very recently, when it was proved by Murai and Nevo~\cite{MuraiNevo2012}. 

It is instructive to reformulate the previous theorem. For this, let us consider a triangulation $\cT$ of an arbitrary $d$-polytope ${P}$. 
An \defn{interior face} of $\cT$ is a face of $\cT$ that is not contained in a facet of ${P}$. In this situation, the degree of the $h$-polynomial of $\cT$ is well-known, see \cite[Prop.~2.4]{McM04} or \cite[Corollary~2.6.12]{DeLoeraRambauSantosBOOK}. 

\begin{proposition}
Let $\cT$ be a triangulation of a polytope. Then \linebreak $\degc(h_\cT(t))$ equals the maximal codimension of an interior face of $\cT$.
\end{proposition}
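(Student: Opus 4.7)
The plan is to use Stanley's reciprocity theorem for simplicial balls. Let $d$ denote the dimension of the polytope $P$; then $\cT$ is a simplicial $d$-ball, and its $h$-polynomial is the polynomial of degree at most $d+1$ given by the usual formula. Introduce the \emph{interior $h$-polynomial}
\[h^{\circ}_\cT(t) = \sum_{i=0}^{d+1} f^{\circ}_{i-1}(\cT)\, t^i\, (1-t)^{d+1-i},\]
where $f^{\circ}_{j}(\cT)$ denotes the number of interior $j$-dimensional faces of $\cT$, with the convention $f^{\circ}_{-1} = 0$ (the empty face lies in every boundary facet). Stanley's reciprocity theorem for simplicial balls (a consequence of Cohen--Macaulayness and Gorenstein duality for the canonical module) yields the identity $h^{\circ}_i(\cT) = h_{d+1-i}(\cT)$ for all $i$, equivalently $h^{\circ}_\cT(t) = t^{d+1}\, h_\cT(1/t)$. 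In particular, the degree of $h_\cT(t)$ equals $d+1$ minus the order of vanishing of $h^{\circ}_\cT(t)$ at $t=0$.

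The second step is to read off that order of vanishing directly from the defining sum. Let $j^*$ be the minimum dimension of an interior face of $\cT$, so that the maximum codimension of an interior face equals $d - j^*$. For every $i$, the polynomial $t^i(1-t)^{d+1-i}$ has lowest-degree monomial $t^i$ with coefficient $+1$. The terms with $i \le j^*$ vanish because $f^{\circ}_{i-1}(\cT) = 0$ in that range, while the term indexed by $i = j^*+1$ contributes $f^{\circ}_{j^*}(\cT) > 0$ to the coefficient of $t^{j^*+1}$. Terms with $i > j^*+1$ begin in degree strictly larger than $j^*+1$ and cannot cancel this contribution. Hence
\[h^{\circ}_{j^*+1}(\cT) = f^{\circ}_{j^*}(\cT) \neq 0, \qquad h^{\circ}_i(\cT) = 0 \text{ for } i \le j^*,\]
so the order of vanishing of $h^{\circ}_\cT(t)$ at $t=0$ is exactly $j^*+1$.

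Combining the two steps gives $\deg(h_\cT(t)) = (d+1) - (j^*+1) = d - j^*$, which is precisely the maximum codimension of an interior face of $\cT$, as required.

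The only nontrivial input is Stanley's reciprocity theorem, which is the main obstacle and which I would quote rather than reprove; everything else reduces to a transparent bookkeeping of leading coefficients in the defining sum, with no possible cancellation. As a sanity check, when $\cT$ is the trivial triangulation of a $d$-simplex one computes $h_\cT(t) = 1$, matching the fact that the only interior face is the top simplex itself, of codimension $0$.
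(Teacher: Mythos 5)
Your proof is correct, and it is essentially the standard argument. Note that the paper does not prove this proposition but simply cites \cite[Prop.~2.4]{McM04} and \cite[Corollary~2.6.12]{DeLoeraRambauSantosBOOK}; the cited proof (in DeLoera--Rambau--Santos, for instance) proceeds in just the way you describe, invoking the Dehn--Sommerville relations for simplicial balls (equivalently Stanley/Macdonald reciprocity $h^{\circ}_i = h_{d+1-i}$, where $h^{\circ}$ is the relative $h$-polynomial of the ball modulo its boundary) and then reading off the order of vanishing of $h^{\circ}_\cT(t)$ at $t=0$ from the leading monomials $t^i$ of $t^i(1-t)^{d+1-i}$, exactly as you do. One small presentational point: it is worth saying explicitly that the relative $h$-vector of the pair $(\cT,\partial\cT)$ coincides with your interior $h$-vector because $f_j(\cT) - f_j(\partial\cT) = f^{\circ}_j(\cT)$, so that the duality you quote really is the one about interior faces. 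With that spelled out, your argument closes the gap cleanly, and your sanity check on the trivial triangulation of a simplex (giving $h_\cT(t)=1$) is accurate.
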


Considering again a simplicial $d$-polytope ${P}$, one defines $g_0(\partial {P}) := 1$, and $g_i(\partial {P}) = h_{i}(\partial {P})-h_{i-1}(\partial {P})$ for $i=1, \ldots, \lfloor \frac{d}{2} \rfloor$. They form the coefficients of the so-called \defn{\text{$g$-polynomial}} $g_{\partial {P}}(t)$. Therefore, Theorem~\ref{thm:glbt} yields for a simplicial polytope ${P}$ that
\[\degc(g_{\partial {P}}(t)) = \min \left\{\degc(h_\cT(t)) \;:\; \cT \text{ triangulation of }{P}\right\}.\]
In other words, the degree $s$ of the $g$-polynomial of a simplicial polytope certifies the existence of {\em some} triangulation that avoids interior faces of dimension $\le d-1-s$. Equivalently, the simplicial polytope ${P}$ is called \defn{$s$-stacked} \cite{McMullenWalkup1971}.

For general polytopes, it is also possible to define (toric) $h$- and $g$-polynomials~\cite{Sta-h-vector}. In this case, by \cite{Sta-local} any rational polytope ${P}$ (conjecturally any polytope) satisfies 
\[\degc(g_{\partial {P}}(t)) \le \min \left\{\degc(h_\cT(t)) \;:\; \cT \text{ triangulation of }{P}\right\}.\]
It is known that simplices are the only polytopes for which $\degc(g_{\partial {P}}(t))=0$. Note that the previous inequality may not be an equality. For instance, a $3$-polytope $P$ which is a prism over a pentagon satisfies $\degc(h_\cT(t)) = 2$ for any triangulation, while $\degc(g_{\partial {P}}(t)) = 1$. In this general situation, it is a hard, open problem to classify all polytopes with $\degc(g_{\partial {P}}(t))=1$ (these polytopes are called \defn{elementary}, see Section~4.3 in \cite{KalaiAspectsPaper}).

To describe how our results fit into this framework, let us consider the degree of the vertex set $\verts({P})$ of a $d$-polytope ${P}$. By observing that any interior simplex $S$ of $\verts({P})$ can be extended to a triangulation that uses~$S$ as a face, we see that $\degc(\verts({P}))$ is the maximal codimension of an interior simplex of some triangulation of~${P}$. In other words,
\[\degc(\verts({P})) = \max \left\{\degc(h_\cT(t)) \;:\; \cT \text{ triangulation of }{P}\right\}.\]
Hence, classifying polytopes of degree $\delta$ is equivalent to studying polytopes where {\em all} triangulations avoid interior faces of dimension $\le d-1-\delta$. This problem is more tractable than the one described above, and Theorem~\ref{thm:deg1}  solves it for $\delta=1$.

Finally, a particular motivation for the study of point configurations of degree $1$ comes from the {Lower Bound Theorem} for balls (see~\cite[Theorem 2.6.1]{DeLoeraRambauSantosBOOK}). It states that if $A$ is a $d$-dimensional configuration of $n$ points, then any triangulation using all the points in $A$ has at least $(n-d)$ full-dimensional simplices, and equality is achieved if and only if every $(d-2)$-face of the triangulation lies on the boundary of $\conv(A)$. 
Hence, $\degc(A)=1$ holds precisely when \emph{all} triangulations using all the points of~$A$ have size~$(n-d)$. This reflects the fact that all triangulations of $A$ are stacked. This interpretation of Theorem~\ref{thm:deg1} is already being used by B\"or\"oczky, Santos and Serra in \cite{BoroczkySantosSerra2013} to derive results in additive combinatorics.

\subsubsection{Totally splittable polytopes}

A \defn{split} of a polytope is a 
subdivision with exactly two maximal cells, which are separated by a \defn{split hyperplane}. A polytope $P$ is called \defn{totally splittable}, if each triangulation of $P$ 
is a common refinement of splits. In~\cite[Theorem 9]{HerrmannJoswig2010}, Herrmann and Joswig establish a complete classification of totally splittable polytopes: simplices, polygons, prisms over simplices, crosspolytopes and a (possible multiple) join of these.

Two splits of $P$ are called \defn{compatible}, if their split hyperplanes do not intersect in the interior of $P$. 
It is easy to see that for a polytope $P$, the degree of its vertex set $\verts(P)$ is at most $1$ if and only if any triangulation of $P$ is a common refinement of compatible splits. 
As a corollary, every polytope of degree~$1$ is totally splittable. In particular, by analyzing each of the cases of Herrmann and Joswig's result one could deduce an independent proof of Theorem~\ref{thm:deg1} for the case that the points in $A$ are in convex position.

\subsection{The relation to Ehrhart theory}\label{sec:Ehrhart}

\subsubsection{The lattice degree of a lattice polytope}

Let us consider the situation where $P \subset \R^d$ is a \defn{lattice polytope}, i.e., its vertices are in the lattice $\Z^d$. As we mentioned before, the \defn{$h^*$-polynomial} is defined by
\[\sum\limits_{k \geq 0} \card{(k P) \cap \Z^d} \, t^k =  \frac{h^*_P(t)}{(1-t)^{d+1}}.\]
Stanley \cite{Sta80,Sta86} showed that the coefficients of $h^*_P$ are non-negative 
integers. Ehrhart theory can be understood as the study of these coefficients.

The degree of $h^*_P(t)$, i.e., the maximal $i \in \{0, \ldots, d\}$ with $h^*_i\not= 0$, is called the \defn{(lattice) degree} $\degZ(P)$ of $P$ 
\cite{BN07}. The \defn{(lattice) codegree} of $P$ is given as $\codegZ(P) := d+1-\degZ(P)$ and equals the minimal positive integer $k$ 
such that $k P$ contains interior lattice points. In recent years these notions and their (algebro-)geometric interpretations 
have been intensively studied \cite{Bat06,BN07,BN08,DiRHNP11,DN10,HNP09,Nil08}. 

It was already noted in \cite[Prop.~1.6]{BN07} that a lattice $d$-polytope $P$ satisfies \begin{equation}
\label{eq:degcleqdegZ}\degc(P\cap\ZZ^d) \le \degZ(P).\end{equation}
If $P$ is normal (i.e., any lattice point in $k P$ is the sum of $k$ lattice points in~$P$), then $\degc(P\cap \Z^d) = \degZ(P)$. 
However, \eqref{eq:degcleqdegZ} is not an equality in general, as the following example in $3$-space shows: $P = \conv(0,e_1,e_2,e_1+e_2+2e_3)$. 
This is a so-called \defn{Reeve simplex} \cite{Ree57}. It satisfies $\degc(P\cap\ZZ^d)=0$, but $\degZ(P) = 2$.

\subsubsection{Cayley decompositions}\label{sec:PrimalCayley}

Our main results in Section~\ref{sec:mainnotions}, are motivated by analogous statements in Ehrhart theory. 
In particular, the notion of a weak Cayley decomposition originates in the widely used construction of Cayley polytopes, which also play a very important role in the study of the degree of lattice polytopes~\cite{BN07,HNP09}.

\begin{definitions}
 Let $A$ be a point configuration in $\RR^d$. We say that $A$ has a
\begin{itemize}
 \item\defn{lattice Cayley decomposition} of length $m$, if $A\subset \ZZ^d$ and there is a lattice projection $\ZZ^d \to \ZZ^{m-1}$ such that $A$ maps onto $\conv(0,e_1, \ldots, e_{m-1})$.
 \item\defn{(combinatorial) Cayley decomposition} of length $m$, 
if there exists a partition $A = A_1 \uplus \cdots \uplus A_m$, such that for any  $\emptyset \not= I \subsetneq \{1, \ldots, m\}$, \(\conv \left( \bigcup\nolimits_{i \in I} A_i\right)\) is a proper face of~$\conv(A)$.

The sets $A_1\dots A_m$ are called the \defn{factors} of the decomposition. Note that they have to be non-empty (because $A_i=\emptyset$ would imply that $\conv(\cup_{i \in \{1, \ldots, m\}\setminus\{i\}} A_i) = \conv(A)$).

\end{itemize}
\end{definitions}

Obviously, if $A$ has a lattice Cayley decomposition, then it has a combinatorial Cayley decomposition whose factors are the preimages of each of the vertices of the simplex. And of course, there are combinatorial Cayley decompositions that are not lattice. However, it is not hard to prove that $A$ has a combinatorial Cayley decomposition of length $m$ if and only if $A$ is combinatorially equivalent (as an oriented matroid) to a configuration $A'$ that can be projected onto the vertex set of a $(m-1)$-simplex.

Despite these analogies, we will see below that the most convenient concept for our purposes turns out to be that of \defn{weak Cayley decompositions}, which we defined in Section~\ref{sec:mainnotions} and that is slightly more general than Cayley decompositions.  Note that the interpretations in Sections~\ref{sec:tverberg} and~\ref{sec:relationtotverberg} also show that it is natural to consider this definition.

Let us remark that the importance of Cayley decompositions arises from the {\em Cayley trick}~\cite{HuberRambauSantos2000,DeLoeraRambauSantosBOOK}. The Cayley trick states that there is a correspondence between configurations $A$ that are a Minkowski sum of $m$ factors, $A=A_1+\dots+ A_m $, with configurations $\widehat A$ that admit a Cayley decomposition of length $m$ (which are known as the \defn{Cayley embedding} of $A_1,\dots,A_n$). With this correspondence there is an isomorphism between the lattice of mixed subdivisions of $A$ and the lattice of subdivisions of $\widehat A$.

\subsubsection{Analogies between the degree and the lattice degree}

From our viewpoint, the degree may be seen as a natural combinatorial generalization of the Ehrhart-theoretic lattice degree.  For example, consider the following properties of the lattice degree of lattice polytopes. Let $P$ be a $d$-dimensional lattice polytope with $r + d + 1$ vertices and lattice degree $\degZ(P)=s$:

\begin{enumerate}[(i)]
\item\label{it:deg0} $P$ has degree $s=0$ if and only if $P$ is unimodularly equivalent to the {\em unimodular simplex} $\conv(0,e_1, \ldots, e_d)$.
\item\label{it:subp} For a lattice polytope $Q \subset P$, we have $\degZ(Q) \le \degZ(P)$ by Stanley's monotoni\-city theorem \cite{Sta93}.
\item\label{it:pyr} If $P$ is a {\em lattice pyramid} over $Q$ (i.e., $P \cong \conv(0,Q\times\{1\}) \subset \R^{d+1}$), then 
$\degZ(P) = \degZ(Q)$.
\item\label{it:deg1} Lattice $d$-polytopes $P$ of degree $s=1$ were classified in \cite{BN07}: 
either $P$ is a $(d-2)$-fold lattice pyramid over the 
triangle with the vertices $(0,0),(2,0),(0,2)$ or $P$ has a lattice Cayley decomposition of length $d$.
\item\label{it:latticepyramid} The following result was shown in \cite{Nil08}: If 
\[d > r(2s + 1) + 4s - 2,\]
then $P$ is a lattice pyramid over an lattice $(d-1)$-polytope.
\item\label{it:latticeCayley} And in~\cite{HNP09}: If $d >f(s) := (s^2+19s-4)/2$, then $P$ has a lattice Cayley decomposition of length $d+1-f(s)$.
\end{enumerate}

Let us compare these results with the combinatorial statements for arbitrary point configurations. Let $A$ be a $d$-dimensional point configuration with $r + d + 1$ points and combinatorial degree $\degc(P)=\dd$:

\begin{enumerate}[(I)]
\item $\degc(A) = 0$ if and only if $A$ is the vertex set of a $d$-simplex (Proposition~\ref{prop:deg0}).
\item For $A' \subset A$, we have $\degc(A') \le \degc(A)$ (Corollary~\ref{cor:degpointdeletioncontraction}).
\item If $A$ is a \defn{pyramid} over $A'$, then $\degc(A) = \degc(A')$ (Corollary~\ref{cor:pyramid}).
\item If $\degc(A) = 1$, then $A$ is a $k$-fold pyramid over a polygon of degree $1$ or admits a weak Cayley decomposition of length~$d$ (Theorem~\ref{thm:deg1}).
\item If
$d \ge r+2\dd$, then $A$ is a pyramid (Corollary~\ref{cor:easybound}).
\item If $d>3\dd$, then $A$ admits a weak Cayley decomposition of length $d+1-3\dd$ (Theorem~\ref{thm:d+1-3dd}).
\end{enumerate}

If a lattice polytope has a lattice Cayley decomposition of length $m$, then its lattice degree is at most $d+1-m$. It was asked in \cite{BN07} whether there might be a converse to this. Above statement \eqref{it:latticeCayley} answered this question affirmatively. The assumption in \eqref{it:latticeCayley} is surely not sharp, it is conjectured that $f(s)=2s$ should suffice, see \cite{DiRHNP11,DN10}. 
Therefore, it seems at first very tempting to also conjecture the analogue statement in the combinatorial setting, at least for vertex sets of polytopes: Namely, 
that for a $d$-dimensional polytope $P$, $d > 2\degc(\verts(P))$ would imply that $\verts(P)$ has a combinatorial Cayley decomposition of length $d+1-2\degc(\verts(P))$. Note that this statement indeed holds for $\degc(\verts(P))=1$ by Theorem~\ref{thm:deg1}. However, rather surprisingly, this guess is wrong as the following example shows.

\begin{example}\label{ex:liftedexceptionalsimplex}
Consider the $(d+1)$-dimensional point configuration \[A=\{0,2 e_1, \dots , 2 e_d, e_1 + e_{d+1},e_1-e_{d+1}, \dots, e_d + e_{d+1},e_d-e_{d+1}\}.\] It is in convex position (i.e., $A$ is the vertex set of $\conv(A)$) and has degree~$2$. However, $A$ does not admit a combinatorial Cayley decomposition of length $>1$.
\end{example}

Even if the point configuration of Example~\ref{ex:liftedexceptionalsimplex} does not admit a combinatorial Cayley decomposition, the subsets $B_i=\{0,2 e_i, e_i + e_{d+1},e_i - e_{d+1}\}$ fulfill all the necessary conditions except for the {disjointness}. Indeed, this point configuration has a weak Cayley decomposition of length~$d$, with factors $A_i=\{2 e_i, e_i + e_{d+1},e_i - e_{d+1}\}$ (and $A_0=\{0\}$). So, 
Example~\ref{ex:liftedexceptionalsimplex} motivates why even for polytopes (instead of more general point configurations) it is necessary to consider weak Cayley decompositions.

Summing up, Theorem~\ref{thm:d+1-3dd} should be seen as the correct combinatorial analogue of the statement \eqref{it:latticeCayley} for lattice polytopes. Moreover, the conjecture that $f(s)=2s$ suffices in the lattice setting matches precisely Conjecture~\ref{conj:d+1-2dd}.

\section{Vector configurations and the dual degree}\label{sec:vectorconfigurations}

The common setting for the proof of the results announced in previous sections will be that of vector configurations. After introducing the necessary notation, we will state our main results in this dual setting in Section~\ref{Sec:mainresultsvectors} and explain the equivalence of all these theorems in Sections~\ref{sec:relationtodegree} and \ref{sec:relationtotverberg}.

\subsection{Notation}

A \defn{vector configuration} $V:=(v_1,\dots,v_n)$ is a finite set of $n$ (possibly repeated) vectors in $\RR^{r}$, which we will assume to be full dimensional (its linear span is the whole $\RR^r$).

Its sets of \defn{linear dependences} $\Dep(V)$ is
$$\Dep(V)=\set{\lambda\in \RR^n}{\sum_{i=1}^n \lambda_i v_i=\veczero}.$$
The sets $C(\lambda):=\set{i}{v_i\in V \text{ and }\lambda_i\neq 0}$ for $\lambda \in\Dep (V)$ are called the \defn{vectors} of $\cM(V)$, the oriented matroid of $V$. The set of vectors of $\cM(V)$ is denoted by $\cV(V)$.
We see the vectors of $\cM(V)$ as signed sets, since each vector $C:=C(\lambda)$ can be decomposed into $C^+=\set{i}{\lambda_i>0}$ and  $C^-=\set{i}{\lambda_i<0}$.
If $C^-=\emptyset$, we say that $C$ is a \defn{positive vector}. The inclusion minimal vectors are called the \defn{circuits} of $\cM(V)$.

Note that we describe a vector $C$ of $\cM(V)$ as a pair $(C^+,C^-)$ of sets of indices of vectors in $V$. This way we can associate vectors of $\cM(V)$ with vectors of related configurations such as $V/v$ or $V\setminus v$ (see below). However, we will often abuse notation and identify $C$, $C^+$ and $C^-$ with the vector subconfigurations $V_C:=\set{v_i\in V}{i\in C}$, $V_{C^+}:=\set{v_i\in V}{i\in C^+}$ and $V_{C^-}:=\set{v_i\in V}{i\in C^-}$ respectively. Hence, we will use $v_i \in C$ and $i \in C$ interchangeably.

In this context, we will say that a subconfiguration $W\subseteq V$ is a positive vector when there is a positive vector $C$ with $V_{C^+}=W$. Observe that $W$ is a positive vector if and only if the origin $\veczero$ is contained in the relative interior of the convex hull of $W$ (seen as points instead of vectors).

An (oriented) linear hyperplane $h$ is defined by a normal vector $v$ and corresponds to the set of points $h:=\set{x}{\sprod{v}{x}=0}$. Its positive (resp. negative) side is the open halfspace $h^+:=\set{x}{\sprod{v}{x}>0}$ (resp. $h^-:=\set{x}{\sprod{v}{x}<0}$). We denote by $\ol h^+=h\cup h^+$ and  $\ol h^-=h\cup h^-$ the corresponding closed halfspaces.

Fix a vector configuration $V$, and let $h_1$ and $h_2$ be linear hyperplanes with normal vectors $v_1$ and $v_2$. We define their composition \defn{$h_1\circ h_2$} (with respect to $V$) as a hyperplane~$h$ with normal vector $v_1+\varepsilon v_2$ for some very small $\varepsilon$ whose value depends on $V$, $h_1$ and $h_2$. Let $v\in V$. If $\varepsilon$ is small enough, then $v\in h$ if and only if $v\in h_1\cap h_2$, and $v\in h^\pm$ if and only if either $v\in h_1^\pm$ or $v\in h_1$ and $v\in h_2^\pm$.

\subsubsection{Deletion and Contraction} Two handy operations on vector configurations $V$ are deletion and contraction (see \cite[Section 6.3(d)]{ZieglerLecturesOnPolytopes}). 
\label{sec:delcontr}

The \defn{deletion} $V\setminus v$ of $v \in V$ is the configuration $V\setminus \{v\}$. The \defn{contraction} $V/v$ of a non-zero vector $v \in V$ is given by projecting $V$ parallel to $v$ onto some linear hyperplane that does not contain $v$ and then deleting $v$. For example, one can use the map $v_i\mapsto \tilde v_i:=v_i -\frac{\sprod{v}{v_i}}{\sprod{v}{v}}v$, and then $V/v=\set{\tilde v_i}{v_i\neq v}$ (see Figure~\ref{fig:exampleContractionDeletion} for an example). The contraction of $\veczero$ is just its deletion. 

In terms of vectors of $\cM(V)$,
\begin{align*}
\cV(V\setminus v)&=\set{(C^+,C^-)}{(C^+,C^-)\in \cV(V),\; v \not\in C^+ \cup C^-},\\
\cV(V\, /\, v)&=\set{(C^+\setminus \{v\},C^-\setminus \{v\})}{(C^+,C^-)\in \cV(V)},
\end{align*}
where the equalities of vectors $C$ of $V$ and vectors $\tilde C$ of $V/v$ in the previous statement should be understood in the sense that their elements have the same corresponding indices. 
\begin{figure}[htpb]
\begin{center}
\begin{tabular}{ccccc}
\includegraphics[width=.2\textwidth]{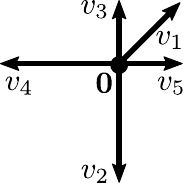}&\qquad\qquad&
\includegraphics[width=.2\textwidth]{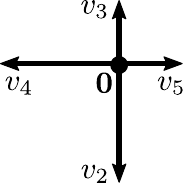}&\qquad\qquad&
\includegraphics[width=.2\linewidth]{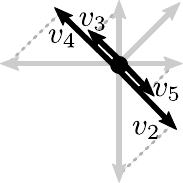}\\\\
$V$&&$V\setminus v_1$&&$V/v_1$
\end{tabular}
\end{center}
\caption{Example of deletion and contraction on~$V$.}
\label{fig:exampleContractionDeletion}
\end{figure}

The definition of deletion and contraction naturally extend to subsets $W\subset V$ by iteratively deleting (resp. contracting)
every element in $W$. In particular, $V/W$ can be obtained by projecting $V\setminus W$ onto a subspace orthogonal to $\lin (W)$. Observe that each linear hyperplane $\tilde h$ in $V/W$ is the image under the projection of a unique hyperplane $h$ in $V$ that goes through the linear span of $W$. 
\begin{lemma}\label{lem:hyperplanescontraction}
If $\pi$ is the projection parallel to $\lin(W)$ onto a complementary subspace, then $\pi$ induces a bijection between hyperplanes in $V$ that contain $\lin(W)$ and hyperplanes in $V/W$, in such a way that $v_i\in h^\pm$ if and only if $\pi(v_i) \in \pi (h)^\pm$.
  \end{lemma}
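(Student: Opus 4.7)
The statement is a standard linear-algebra bijection between hyperplanes, so the plan is essentially to unwind the definitions carefully and then track signs. I would begin by fixing the setup: let $L = \lin(W)$ and choose a complement $U$ so that $\RR^r = L \oplus U$, with $\pi \colon \RR^r \to U$ the projection with kernel $L$. Every $x \in \RR^r$ decomposes uniquely as $x = x_L + x_U$ with $\pi(x) = x_U$.

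Next I would use the fact that every linear hyperplane $h \subset \RR^r$ is the kernel of a nonzero linear functional $\phi \in (\RR^r)^*$, and the conditions "$h$ is a hyperplane" and "$h$ contains $L$" together translate to "$\phi \neq 0$ and $\phi|_L = 0$". Any such $\phi$ factors uniquely as $\phi = \tilde\phi \circ \pi$ with $\tilde\phi \in U^*$ nonzero, because the kernel of $\pi$ is precisely $L$ and $\pi$ is surjective onto $U$. Moreover $h = \pi^{-1}(\ker \tilde\phi)$, which (since $\pi$ is surjective) yields $\pi(h) = \ker \tilde\phi$, a linear hyperplane in $U$. Conversely, any hyperplane $\tilde h = \ker \tilde\phi$ in $U$ pulls back to the hyperplane $\pi^{-1}(\tilde h) = \ker(\tilde\phi \circ \pi)$ of $\RR^r$, which visibly contains $L$. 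These two constructions are mutually inverse, establishing the bijection. Under the identification of hyperplanes in $V/W$ with hyperplanes of $U$ (note that elements of $W$ project to $\veczero$ and are deleted, so $V/W$ really lives in $U$), this is exactly the asserted bijection.

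Finally, for the sign statement I would simply observe that once the orientation of $h$ is fixed by a choice of normal (equivalently, by the sign of $\phi$), the functional $\tilde\phi$ is uniquely determined, so $\pi(h)$ inherits a canonical orientation. For every $v_i \in V \setminus W$ we then have $\phi(v_i) = \tilde\phi(\pi(v_i))$, whence
\[
\sign\bigl(\phi(v_i)\bigr) = \sign\bigl(\tilde\phi(\pi(v_i))\bigr),
\]
which is precisely the claim that $v_i \in h^{\pm}$ (resp.\ $v_i \in h$) if and only if $\pi(v_i) \in \pi(h)^{\pm}$ (resp.\ $\pi(v_i) \in \pi(h)$). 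There is no genuine obstacle here; the only care needed is to keep the choice of functional (and hence the orientation) consistent on both sides of the bijection, which the factorization $\phi = \tilde\phi \circ \pi$ does automatically.
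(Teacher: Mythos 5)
Your proof is correct. The paper actually states this lemma without proof (treating it as a direct observation following from the paragraph just above it, which remarks that each hyperplane of $V/W$ is the image of a unique hyperplane of $V$ through $\lin(W)$), so there is nothing to compare against; your factorization argument $\phi = \tilde\phi \circ \pi$ through the kernel $L = \lin(W)$ is a clean and complete way to make the paper's implicit claim precise, including the consistency of orientation, which is the only point that requires any care.
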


\subsection{The dual degree}

\begin{definition}
Let $V$ be an $r$-dimensional vector configuration. Its \defn{dual degree} is the nonnegative integer
\begin{equation}\label{eq:defdegG}\degG(V):=\max_{h} |h^+\cap V|-r,\end{equation}
where $h$ runs through all linear hyperplanes of $\RR^r$. 
That is, $\degG(V)=\dd$ if and only if $\dd$ is the minimal integer such that for every linear hyperplane $h$ there are at most $r+\dd$ vectors of $V$ in $h^+$. 

The \defn{dual codegree} of $V$ is defined as \begin{equation}\label{eq:defcodegG}\codegG(V):=\min_{h} |\ol h^+\cap V|,\end{equation}
with $h$ running through all linear hyperplanes. Note that if $|V|=r+d+1$, then \begin{equation}\label{eq:codegG=d+1-degG}\codegG(V)=d+1-\degG(V).\end{equation}

\label{dual-degree-definition-label}
\end{definition}

\begin{example}\label{ex:lawrence}
 Let $V$ be a centrally symmetric configuration of $n$ non-zero vectors in $\RR^r$. Then every linear hyperplane $h$ contains at most one representative of each antipodal pair in $h^+$. Any hyperplane in general position attains this bound, which shows that $\degG(V)=\frac{n}{2}-r$.
\end{example}

A first property of the dual degree of a vector configuration is that it can only decrease when taking subconfigurations and contractions. We omit its easy proof, which follows from the definitions.

\begin{proposition}\label{prop:subconfigurations}
For $v\in V$, $\degG(V\setminus v)\leq \degG(V)$ and $\degG(V/v)\leq \degG(V)$.
\end{proposition}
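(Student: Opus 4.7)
The plan is to handle the two inequalities separately; both reduce, via Lemma~\ref{lem:hyperplanescontraction} and a small perturbation argument, to direct consequences of the definition of the dual degree.

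For the deletion inequality, the inclusion $V\setminus v \subset V$ gives $|h^+\cap (V\setminus v)|\leq |h^+\cap V|$ for every linear hyperplane $h \subset \RR^r$, so whenever $V\setminus v$ still spans $\RR^r$ one obtains $\degG(V\setminus v)\leq \degG(V)$ immediately by taking the maximum over $h$. The corner case, when $V\setminus v$ spans only a proper linear hyperplane $H\subsetneq \RR^r$ (which forces $v\notin H$, in particular $v\neq \veczero$), is handled by starting from a maximizing hyperplane $h' \subset H$ for $V\setminus v$ as an $(r-1)$-dimensional configuration and extending it to a hyperplane $\tilde h\subset \RR^r$ whose ambient normal is tilted in a direction $w\in H^\perp$ with $\sprod{w}{v}>0$: this preserves the signs of all vectors of $V\setminus v\subset H$ while placing $v$ in $\tilde h^+$, so $|\tilde h^+\cap V|=|h'^+\cap(V\setminus v)|+1$, exactly absorbing the dimension jump from $r-1$ to $r$.

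For the contraction inequality, Lemma~\ref{lem:hyperplanescontraction} supplies a sign-preserving bijection between hyperplanes of $V/v$ and linear hyperplanes of $\RR^r$ containing $\lin(v)$. Since $v$ lies on every such~$h$, this identifies
\[\degG(V/v)\;=\;\max_{h\supset \lin(v)}\, |h^+\cap V|\;-\;(r-1),\]
so it suffices to prove $\max_{h\supset \lin(v)} |h^+\cap V|\leq \max_h |h^+\cap V|-1$. To this end, given any $h$ through $v$ with normal $u$, I would perturb to $u_\epsilon:=u+\epsilon v$ with $\epsilon>0$ small. Because $v\neq \veczero$ and $\sprod{u}{v}=0$, one has $\sprod{u_\epsilon}{v}=\epsilon\sprod{v}{v}>0$, so $v$ now lies on the positive side; and for each $v_i\in V\setminus\{v\}$ with $\sprod{u}{v_i}\neq 0$, the sign of $\sprod{u_\epsilon}{v_i}$ agrees with that of $\sprod{u}{v_i}$ for $\epsilon$ below a finite threshold. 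Hence the hyperplane $h_\epsilon$ with normal $u_\epsilon$ satisfies $|h_\epsilon^+\cap V|\geq |h^+\cap V|+1$, giving the required strict gap.

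The main (and only mild) obstacle is the perturbation bookkeeping: one must pick $\epsilon$ small enough to avoid flipping any of the finitely many already-strict inner products $\sprod{u}{v_i}$, which is routine. Everything else follows directly from the definitions and Lemma~\ref{lem:hyperplanescontraction}.
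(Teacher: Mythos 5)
The paper gives no written proof of Proposition~\ref{prop:subconfigurations} --- it simply remarks that the statement ``follows from the definitions'' --- so there is nothing to compare against; what you wrote is the natural elaboration, and it is correct. The contraction half is exactly what one would expect: Lemma~\ref{lem:hyperplanescontraction} identifies $\degG(V/v)$ with $\max_{h\supset\lin(v)}|h^+\cap V|-(r-1)$, and perturbing the normal $u$ by $\epsilon v$ pushes $v$ into the open positive side without disturbing the strict inequalities, which gives the required gap of $1$. (You implicitly assume $v\neq\veczero$ there; the paper defines $V/\veczero$ as $V\setminus\veczero$, which reduces to your deletion case, so this is harmless.)

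One small imprecision in the deletion corner case: if the ambient normal is taken to be $u'+\epsilon w$ with $u'\in H$ the $H$-normal of $h'$ and $w\in H^\perp$, then for infinitesimal $\epsilon$ the sign of $\sprod{u'+\epsilon w}{v}$ is governed by $\sprod{u'}{v}$, not by $w$, since $v$ generally has a nonzero $H$-component; so an infinitesimal tilt need not place $v$ in $\tilde h^+$. The fix is immediate: among the normals $u'+tw$ ($t\in\RR$), all of which induce the orientation $h'^+$ on $H$, choose $t$ large enough that $\sprod{u'}{v}+t\sprod{w}{v}>0$; or, more in keeping with the infinitesimal setup you use in the contraction half, make $w$ the dominant normal and use $w+\epsilon u'$. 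With that adjustment the argument is complete.
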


\subsection{\CayleyG and weak \CayleyG decompositions}
\begin{definitions}
 Let $V$ be a vector configuration in $\RR^r$. Then $V$ admits~a
\begin{itemize}
 \item \defn{(combinatorial) \CayleyG decomposition} of length $m$, 
if there exists a partition  $V=V_1\uplus \cdots \uplus V_m$ such that for $i=1\ldots m$, $V_i$ is a positive vector of~$\cM(V)$. That is, for each factor $V_i$  there is a positive vector ${\lambda^{(i)}}\in \RR^{|V_i|}$ such that $\sum_{v_j\in V_i} ({\lambda^{(i)}})_j\cdot v_j=\veczero$.
 \item \defn{weak \CayleyG decomposition} of length $m$, if it contains $m$ disjoint positive vectors of $\cM(V)$, called the \defn{factors} of the decomposition. 
\end{itemize}
\end{definitions}
Since every positive vector contains a positive circuit, we will often assume that the factors of a weak \CayleyG decomposition are circuits (that is, inclusion-wise minimal).

\begin{proposition}\label{prop:degree-cayley}
If a vector configuration $V$ in $\RR^r$ admits a weak \CayleyG decomposition of length~$m$, then $\degG({V})\leq n-r-m$.
\end{proposition}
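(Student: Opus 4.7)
The plan is to unwind the definition of the dual degree as a maximum over linear hyperplanes and to use the defining property of a positive vector to force at least $m$ elements of $V$ to sit outside of any given open halfspace.

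First I would rephrase the target inequality. By \eqref{eq:defdegG} the bound $\degG(V)\le n-r-m$ is equivalent to
\[
|h^+\cap V|\;\le\; n-m \quad\text{for every linear hyperplane }h,
\]
or, complementarily, $|\overline{h^-}\cap V|\ge m$ for every linear hyperplane $h$, where $\overline{h^-}=h\cup h^-$ denotes the closed halfspace opposite to $h^+$. So it suffices to prove that for any linear hyperplane $h\subset\RR^r$, at least $m$ of the vectors of $V$ fail to lie in the open halfspace $h^+$.

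Next I would use the key property of positive vectors. Let $V_1,\dots,V_m$ be the disjoint factors of the weak \CayleyG decomposition, each a positive vector of $\cM(V)$. Fix any linear hyperplane $h$ with normal vector $u$, so that $h^+=\set{x\in\RR^r}{\sprod{u}{x}>0}$. For each factor $V_i$ there exist strictly positive coefficients $\lambda_j>0$ (for $v_j\in V_i$) with $\sum_{v_j\in V_i}\lambda_j v_j=\veczero$. Pairing with $u$ gives
\[
0\;=\;\sprod{u}{\textstyle\sum_{v_j\in V_i}\lambda_j v_j}\;=\;\sum_{v_j\in V_i}\lambda_j\sprod{u}{v_j}.
\]
Since every $\lambda_j$ is strictly positive, the values $\sprod{u}{v_j}$ cannot all be strictly positive; hence at least one $v_j\in V_i$ satisfies $\sprod{u}{v_j}\le 0$, i.e.\ lies in $\overline{h^-}$.

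Finally, I would combine this with the disjointness of the factors. Each of the $m$ disjoint sets $V_1,\dots,V_m$ contributes at least one vector to $\overline{h^-}\cap V$, and these contributions are pairwise distinct. Thus $|\overline{h^-}\cap V|\ge m$, proving the required inequality for every $h$ and completing the proof. The only non-bookkeeping step is the observation that a positive vector cannot be contained in an open linear halfspace; this is immediate from pairing a nontrivial nonnegative linear dependence against the halfspace normal, so I do not expect any real obstacle.
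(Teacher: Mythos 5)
Your proof is correct and takes essentially the same approach as the paper: both observe that each factor, being a positive vector, must have at least one element in the closed negative halfspace of any linear hyperplane, and then count. You simply spell out the pairing-with-the-normal step that the paper leaves implicit.
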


\begin{proof}
 If ${V\subset \RR^r}$ has a weak \CayleyG decomposition whose factors are $V_1,\dots,V_m$, then every linear hyperplane ${h}$ contains at least one element of every factor in $\ol {h}^-$. Therefore $|{h}^+\cap {V}|\leq n-m$ for any ${h}$, which proves that $\degG({V})\leq n-r-m$.
\end{proof}

\subsection{The main results for vector configurations}\label{Sec:mainresultsvectors}
Here we restate in terms of vector configurations the main results announced in previous sections. Below we show the equivalence of these theorems, which will be proven in Sections~\ref{sec:degk} and \ref{sec:deg1}.

\begin{ctheorem}{\ref{thm:d+1-3dd}$_{\text{D}}$}\label{thm:d+1-3dd_dual}
 Let $V$ be a vector configuration of rank~$r$ with $r+d+1$ elements and dual degree $\dd:=\degG(V)$. Then $V$ has a weak \CayleyG decomposition of length at least $d-3\dd+1$.
\end{ctheorem}

For the case of configurations of degree~$1$, this result can be improved as follows.
\begin{ctheorem}{\ref{thm:deg1}$_{\text{D}}$}\label{thm:deg1_dual}
 Let ${V}$ be a vector configuration in $\RR^r$ with $n=r+d+1$ elements and $d\ge3$. If $\degG({V})=1$, then ${V}$ has a weak \CayleyG decomposition of length $d$.
\end{ctheorem}

This leads to formulate the following conjecture.
\begin{cconjecture}{\ref{conj:d+1-2dd}$_{\text{D}}$}\label{conj:d+1-2dd_dual}
Any vector configuration $V$ of rank~$r$ and $r+d+1$ elements  and dual degree $\dd:=\degG(V)<\frac{d}{2}$ admits a weak \CayleyG decomposition of length at least $d+1-2\degG(V)$. 
\end{cconjecture}

This conjecture, if true, is easily seen to be sharp.
\begin{example}\label{ex:sharpconj}
 Consider the rank~$r=2$ vector configuration $V$ whose endpoints are the set of vertices of a regular pentagon centered at the origin (this is the Gale dual of a pentagon). This configuration has dual degree~$\dd=1$ and $r+d+1=5$ elements. It admits a weak \CayleyG decomposition of lenght $d+1-2\dd=1$, but it cannot have a weak \CayleyG decomposition of length $2$.

 If we embed $k$ copies of this vector configuration into $k$ orthogonal subspaces of $\RR^{r=2k}$, we obtain a vector configuration of degree~$\dd=k$ with $r+d+1=5k$ elements. It admits trivially a weak \CayleyG decomposition into $d+1-2\dd=k$ factors, and it is not hard to see that it does not admit any decomposition into more factors.
\end{example}

\subsection{The relation with the degree of point configurations}\label{sec:relationtodegree}

The definitions of dual degree and weak \CayleyG decompositions have been chosen in such a way that they correspond to the original definitions of degree and weak Cayley decompositions from Section~\ref{sec:mainnotions}. They are related through \defn{Gale duality}, in the same fashion neighborly point configurations and balanced vector configurations are related (see, for example~\cite{Padrol2012}).

\subsubsection{Gale duality}

We will only provide a very brief summary of some basic results on Gale duality. For an introduction one can consult
~\cite[Lecture 6]{ZieglerLecturesOnPolytopes}, and \cite[Chapter 9]{OrientedMatroids1993} for a more detailed treatment and the relation with oriented matroid duality. 

Gale duality relates a configuration $A:=(a_1,\dots,a_n)$ of $n$ labeled points whose affine span is $\RR^d$ with a configuration $V:=(v_1,\dots,v_n)$ of $n$ labeled vectors in $\RR^{r:=n-d-1}$.  The configuration $V$ is called a \defn{Gale dual} of $A$ and denoted~$\Gale A$. 
Remark that there may be repeated vectors in $V$, even if all the $a_i$ were different.

The key property of Gale duality is that it translates affine evaluations into linear dependencies. We will only need a particular consequence of this statement.

\begin{lemma}\label{lem:gale}
Let $A:=\{a_1,\ldots,a_n\}\subset\RR^d$ as before and $V:=\{v_1,\ldots,v_n\}\subset\RR^{n-d-1}$ denote its Gale dual. For any $I\subset [n]$, let $F:=\set{a_i}{i\in I}$ and $\bGale{F}:=\set{v_i}{i\notin I}$. Then:
\begin{enumerate}[(i)]
 \item\label{it:gale1} $F$ is contained in a supporting hyperplane of $\conv(A)$ if and only if $\bGale{F}$ contains a positive vector of $\cM(V)$.
 \item\label{it:gale2} $F$ are the only points contained in a supporting hyperplane of $A$ if and only if $\bGale{F}$ is a positive vector of $\cM(V)$.
\end{enumerate}
\end{lemma}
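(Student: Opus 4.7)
The plan is to recast Gale duality as an isomorphism between affine evaluations on $A$ and linear dependences of $V$, after which both statements become direct translations of the standard separation characterization of supporting hyperplanes. Concretely, let $M$ be the $(d+1)\times n$ matrix whose $i$-th column is $\widehat{a}_i := (\trans{a_i},1)$; since $A$ is full-dimensional, $M$ has rank $d+1$, so by definition of the Gale dual the vectors $v_1,\ldots,v_n$ may be chosen as the columns of an $r\times n$ matrix whose rows form a basis of $\ker M$. Taking orthogonal complements one then obtains the translation
\[
 \sum_{i=1}^n \lambda_i v_i = \veczero \quad\Longleftrightarrow\quad \lambda_i = \ell(a_i) \ \text{ for some affine functional } \ell\colon\RR^d\to\RR,
\]
and, since $A$ affinely spans $\RR^d$, the functional $\ell$ is determined by $\lambda$ and vanishes identically if and only if $\lambda = \veczero$.

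To prove (\ref{it:gale1}) I would use that $F=\set{a_i}{i\in I}$ lies in a supporting hyperplane of $\conv(A)$ if and only if there is an affine functional $\ell\not\equiv 0$ on $\RR^d$ satisfying $\ell(a_i)=0$ for all $i\in I$ and $\ell(a_i)\le 0$ for all $i\in[n]$. Setting $\lambda_i:=-\ell(a_i)$ turns this into the existence of a nonzero $\lambda\in\RR^n$ with $\lambda\ge \veczero$, with $\lambda_i=0$ for $i\in I$, and that is an affine evaluation on $A$. By the displayed equivalence this is the same as the existence of a nonzero $\lambda\in\RR^n$ with $\lambda\ge\veczero$, $\supp(\lambda)\subseteq[n]\setminus I$, and $\sum_i\lambda_i v_i=\veczero$, which is precisely the statement that $\bGale{F}=\set{v_j}{j\notin I}$ contains a positive vector of $\cM(V)$.

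For (\ref{it:gale2}) the hypothesis is strengthened: $F$ equals $H\cap A$ for some supporting hyperplane $H$, which is equivalent to choosing $\ell$ above with $\ell(a_i)<0$ for every $i\notin I$ while still $\ell(a_i)=0$ for $i\in I$. Under the same translation this corresponds to the existence of $\lambda\in\RR^n$ with $\lambda\ge\veczero$, $\supp(\lambda)$ equal to (rather than merely contained in) $[n]\setminus I$, and $\sum_i\lambda_i v_i=\veczero$, \ie to the condition that $\bGale{F}$ \emph{itself} is a positive vector of $\cM(V)$. The only step deserving attention is ensuring that the correspondence $\ell\leftrightarrow\lambda$ matches $\ell\not\equiv 0$ with $\lambda\neq\veczero$, which follows from the full-dimensionality of $A$; apart from this, both directions are bookkeeping on supports once the affine-evaluation/linear-dependence dictionary is in place.
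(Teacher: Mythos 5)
The paper states this lemma without proof, pointing the reader to standard references on Gale duality and summarizing the key mechanism as "Gale duality translates affine evaluations into linear dependencies." Your argument implements exactly that dictionary in the standard way: choosing $V$ as the columns of a matrix whose rows span $\ker M$ for $M = (\widehat{a}_1 \mid \cdots \mid \widehat{a}_n)$, the orthogonality $\text{row}(M) = (\ker M)^\perp$ gives the displayed equivalence $\sum_i \lambda_i v_i = \veczero \iff \lambda_i = \ell(a_i)$, and the rest is the usual supporting-functional description of faces plus bookkeeping on signs and supports. This is correct and is essentially the same approach the paper has in mind, so there is nothing to add beyond noting that full-dimensionality of $A$ is indeed what makes the $\lambda \leftrightarrow \ell$ correspondence a bijection with $\lambda = \veczero \iff \ell \equiv 0$, which you address.
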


With this, we are ready to prove the proposition that translates between the degree and the dual degree.

\begin{proposition}\label{prop:primaldual}
 Let $A:=\{a_1,\ldots,a_n\}\subset\RR^d$ be a point configuration and $V:=\{v_1,\ldots,v_n\}\subset\RR^{n-d-1}$ its Gale dual. Then,
 \begin{itemize}
  \item $\degc(A)=\degG(V)$ and $\codegc(A)=\codegG(V)$, 
  \item $A$ admits a combinatorial (resp. weak) Cayley decomposition with factors $A_1,\dots, A_m$ if and only if $V$ admits a combinatorial (resp. weak) \CayleyG decomposition with factors $V_1,\dots, V_m$, where $V_i:=\set{v_j}{a_j\in A_i}$.
 \end{itemize}
\end{proposition}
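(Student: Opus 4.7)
The proposition consists of two independent correspondences---one for (co)degree and one for (weak) Cayley decompositions---both of which are direct dictionary translations via Lemma~\ref{lem:gale}. My plan is to handle each correspondence in turn, translating primal notions (interior faces, proper faces of $\conv(A)$) into dual ones (open halfspaces, positive vectors) by combining the two parts of Lemma~\ref{lem:gale} with the alternate characterization of $\codegc(A)$ from Definition~\ref{def:combdegree}.

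For the (co)degree equality, I first recast interior faces in dual language. A non-empty subset $F\subseteq A$ is \emph{not} an interior face iff it lies in some supporting hyperplane of $\conv(A)$, which by Lemma~\ref{lem:gale}(i) is equivalent to $\bGale F$ containing a positive vector of $\cM(V)$. A standard Gordan-type separation argument shows that a subset $T\subseteq V$ contains no positive vector exactly when $T$ lies in some open linear halfspace $h^+$; hence $F$ is an interior face iff $\bGale F\subseteq h^+$ for some $h$. Using the characterization of $\codegc(A)$ from Definition~\ref{def:combdegree} as the minimum size of an interior face and minimizing $|F|=|V\setminus(h^+\cap V)|$ over valid pairs $(F,h)$ yields
\[
\codegc(A)\;=\;\min_h|\ol h^-\cap V|\;=\;\min_h|\ol h^+\cap V|\;=\;\codegG(V),
\]
after exchanging $h\leftrightarrow -h$. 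Since $n=r+d+1$, the identities $\codegc(A)=d+1-\degc(A)$ and \eqref{eq:codegG=d+1-degG} then give $\degc(A)=\degG(V)$.

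For the Cayley correspondence I would invoke Lemma~\ref{lem:gale}(ii), which identifies subsets that are \emph{exactly} the set of points on a proper face of $\conv(A)$ with subsets whose Gale dual is \emph{itself} a positive vector (not merely contains one). In the weak case, this means $A\setminus A_i$ is the set of points on a proper face iff $\bGale{A\setminus A_i}=V_i$ is a positive vector; disjointness of the $A_i$ inside the partition of $A$ transports directly to disjointness of the $V_i$ inside $V$ (with the remainder $A_0$ matching $V\setminus\bigcup_i V_i$). In the combinatorial case, applying Lemma~\ref{lem:gale}(ii) to $\bigcup_{i\in I}A_i$ for each non-empty proper $I\subsetneq[m]$ rephrases the primal condition as ``$\bigcup_{j\in J}V_j$ is a positive vector for every non-empty proper $J\subseteq[m]$''; the elementary observation that summing the positive coefficient vectors of two positive vectors gives a positive vector shows that this is equivalent to each $V_i$ individually being a positive vector, which matches the dual definition.

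I do not foresee a substantive obstacle, as every step is a straightforward unwinding of definitions through Lemma~\ref{lem:gale}. The main delicate point is respecting the distinction between \emph{lying in} a supporting hyperplane (Lemma~\ref{lem:gale}(i), corresponding to a Gale dual \emph{containing} a positive vector) and \emph{being exactly} the set of points on a proper face (Lemma~\ref{lem:gale}(ii), corresponding to a Gale dual \emph{being} a positive vector). A routine sanity check is that the standing hypothesis $\aff A=\RR^d$ forces $\sum_i v_i=0$, so $V$ never lies in an open halfspace; this guarantees that the minimum defining $\codegG(V)$ is attained by a non-empty configuration and makes the correspondence with non-empty interior faces clean.
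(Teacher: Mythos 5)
Your proof is correct and follows essentially the same route as the paper: both parts rest on Lemma~\ref{lem:gale} combined with a Gordan/Farkas separation argument for the (co)degree, and on Lemma~\ref{lem:gale}(ii) for the Cayley part. The only organizational differences are that you run the degree argument through the codegree (minimum size of an interior face equals $\min_h|\ol h^+\cap V|$) rather than directly through the degree as the paper does, and that you spell out the sum-of-positive-vectors observation for the combinatorial Cayley case, which the paper compresses into ``the same argument with $A_0=\emptyset$.''
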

\begin{proof}
 We prove first that $\degc(A)=\degG(V)$. By definition, $\degc({A})=\dd$ if and only if every subset $S$ of ${A}$ of size $d-\dd$ is contained in a supporting hyperplane of $\conv(A)$. Equivalently, if $W$ contains the origin in its convex hull for every $W\subset V$ of size $n-d+\dd=r+\dd+1$ (see Lemma~\ref{lem:gale}). Therefore, if~$\degc({A})=\dd$ there cannot be a hyperplane $h$ in $\RR^r$ through the origin that contains more than $r+\dd$ vectors of $V$ in $h^+$ (by the Farkas Lemma, see \cite[Section 1.4]{ZieglerLecturesOnPolytopes}). This proves that $\degG(V)\leq \deg(A)$.
  Conversely, if there is a set of $r+\dd$ vectors whose convex hull does not contain the origin (which by Lemma~\ref{lem:gale} means that there is an interior face of $A$ of cardinality $\leq d+1-\dd$), then we can separate this set from the origin by a hyperplane~$h$, again by the Farkas Lemma. This proves that $\degG(V)\geq \deg(A)$ and hence that $\degG(V)= \deg(A)$. Moreover, by~\eqref{eq:codegG=d+1-degG} \[\codegG(V)=d+1-\degG(V)=d+1-\degc(A)=\codegc(A).\]
 
 Now assume that ${A} = A_0\uplus A_1 \uplus \cdots \uplus A_m$ is a weak Cayley decomposition. That is $A\setminus A_i$ is the set of points in a proper face of~$\conv({A})$, for any $1\leq i\leq m$. Then, by Lemma~\ref{lem:gale}, $V_i=\set{v_j}{a_j\in A_i}$ is a positive vector of $\cM({V})$ for $0\leq i\leq m$, which contains a positive circuit. Thus, ${V}$ has a weak \CayleyG decomposition of length~$m$. The converse is direct.

 The same argument with $A_0,V_0=\emptyset$ shows the equivalence between combinatorial Cayley decompositions of $A$ and combinatorial \CayleyG decompositions of $V$.
\end{proof}

With this we can see how our results are directly related. Indeed, the fact that Theorem~\ref{thm:d+1-3dd_dual} implies Theorem~\ref{thm:d+1-3dd} is straightforward by Proposition~\ref{prop:primaldual}. Conjecture~\ref{conj:d+1-2dd_dual} translates into Conjecture~\ref{conj:d+1-2dd}. Moreover, since the dual of the direct sum is the join (see \cite[Exercise 9.9]{ZieglerLecturesOnPolytopes} for the definition), Example~\ref{ex:sharpconj} shows that the join of $k$ pentagons proves the tightness of the conjecture.

Theorem~\ref{thm:deg1_dual} implies the classification of Theorem~\ref{thm:deg1}, because it shows that in dimension $d\geq 3$ if $A$ has degree~$1$, then either $A$ is a pyramid or it admits a weak Cayley decomposition of length~$d$. Observe that the dimension of each factor of a weak Cayley decomposition of length $d$ cannot be greater than $1$, since all factors are included in a flag of faces of length $d-1$. 
Factors of dimension~$0$ are just apices of pyramids, which can be ignored by Corollary~\ref{cor:pyramid}. Therefore, the only $d$-dimensional configurations that admit weak Cayley decompositions of length~$d$ are (up to repeated points)
\begin{itemize}
\item either $k$-fold pyramids over prisms over simplices with extra  points on the ``vertical'' edges (in which case $A_0=\emptyset$,  and each vertical edge is a factor of a combinatorial  Cayley decomposition of length $d$);
\item or $d$-simplices $\Delta_{d}$ with a vertex $a$ and points on the edges adjacent to $a$
  (here, $A_0 = \{\aff(a)\cap A\}$ and for each edge $e_i$ of $\Delta_{d}$ containing $a$, ${A_i}:=(e_i\cap A)\setminus A_0$ is a factor of a weak Cayley
  decomposition of length~$d$).
\end{itemize}

Hence, to recover the formulation of Theorem~\ref{thm:deg1} presented in the introduction, we only need to observe that a $2$-dimensional point configuration $A$ has degree $\deg(A)\leq 1$ if and only if it does not have interior points.

\subsection{The relation with core and Tverberg points}\label{sec:relationtotverberg}

\begin{proposition}\label{prop:dualtotverberg}
Let $S=\{s_1,\dots,s_n\}$ be a point configuration in $\RR^r$, and consider the vector configuration $V=\{v_1,\dots,v_n\}$ consisting of the set of vectors joining the origin $\veczero$ to the points in $S$. That is, $v_i=\overrightarrow{\veczero,s_i}$. Then,
\begin{itemize}
 \item $\veczero\in \Ce_m(S)$ if and only if $\codegG(V) \geq m$, and
 \item $\veczero\in \Tv_m(S)$ if and only if $V$ admits a weak \CayleyG decomposition of length $m$.
\end{itemize}
\end{proposition}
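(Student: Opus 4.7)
The proof splits along the two bulleted claims, and both reduce to a careful bookkeeping through the identification of each point $s_i$ with its position vector $v_i$.

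For the first claim, I would argue that the halfspace depth of $\veczero$ is realized by closed halfspaces bounded by \emph{linear} hyperplanes. Every closed halfspace containing $\veczero$ has the form $H_{a,b} = \{x : \sprod{a}{x} \leq b\}$ with $a \neq 0$ and $b \geq 0$. For fixed $a$, the quantity $|H_{a,b} \cap S|$ is monotone nondecreasing in $b$, so the depth-minimizing choice takes $b=0$. Since $s_i \in \{x : \sprod{a}{x} \leq 0\}$ if and only if $v_i$ lies in the same set, the depth of $\veczero$ equals $\min_{h} |\ol{h}^{-}\cap V|$, which by the symmetry $h \mapsto -h$ coincides with $\codegG(V) = \min_h |\ol{h}^+ \cap V|$. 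Hence $\veczero \in \Ce_m(S)$ if and only if $\codegG(V) \geq m$.

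For the second claim, the pivotal observation---already latent in the excerpt's remark that the origin lies in the relative interior of $\conv(W)$ if and only if $W$ is a positive vector---is that for any $T \subseteq S$, we have $\veczero \in \conv(T)$ if and only if $V_T := \{v_j : s_j \in T\}$ contains a positive vector of $\cM(V)$. The forward direction restricts a convex combination $\veczero = \sum \lambda_j s_j$ to its positive support $T' \subseteq T$, obtaining $\veczero$ in the relative interior of $\conv(T')$, so that $V_{T'}\subseteq V_T$ is a positive vector; the reverse direction normalizes a positive dependency $\sum_{j \in J} \mu_j v_j = \veczero$ (with all $\mu_j > 0$) to a convex combination. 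Applied to each member of a Tverberg partition $S_1, \dots, S_m$, this observation converts $\veczero \in \Tv_m(S)$ into the existence of $m$ pairwise disjoint positive vectors of $\cM(V)$ chosen inside the disjoint $V_{S_i}$, which is precisely a weak \CayleyG decomposition of length $m$. Conversely, disjoint positive vectors $V_1, \ldots, V_m$ give rise to disjoint point subsets $S_1, \ldots, S_m$ each containing $\veczero$ in (the relative interior of) its convex hull, recovering the Tverberg witness.

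None of these steps presents a serious obstacle. The only small subtlety is confirming that the halfspace-depth minimum in the first part is realized by hyperplanes through the origin, which follows immediately from the monotonicity argument above. Everything else is routine unwinding of definitions via the identification $s_i = v_i$ and the excerpt's characterization of positive vectors of $\cM(V)$.
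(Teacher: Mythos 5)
Your proof is correct and follows essentially the same route as the paper's: reduce the core condition to linear hyperplanes (so that halfspace depth becomes the dual codegree), and translate each Tverberg part into a positive vector via the characterization that a subconfiguration is a positive vector exactly when the origin lies in the relative interior of its convex hull. You are in fact slightly more careful than the paper's terse argument in two spots — the monotonicity justification for restricting to linear hyperplanes, and the passage from $\veczero \in \conv(S_i)$ to a positive-support subset $T' \subseteq S_i$ so that $V_{T'}$ is genuinely a positive vector — but these are refinements of the same proof, not a different approach.
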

\begin{proof}
The origin is in the $m$-core of $S$, if every closed halfspace containg it contains at least $m$ points of $S$. Obviously, it is enough to consider those closed halfspaces $\ol h^+$ that contain the origin in their boundary $h$. For those, a point $s_i\in S$ is contained in $\ol h^+$ if and only if the vector $v_i=\overrightarrow{\veczero,s_i}$ belongs to $\ol h^+$, and therefore the claimed equivalence follows from Definition~\ref{dual-degree-definition-label}\eqref{eq:defcodegG}.

To see that $\veczero\in \Tv_m(S)$ if and only if $V$ admits a weak \CayleyG decomposition of length $m$, recall that $V_i\subset V$ is a positive vector of $\cM(V)$ if and only if its set of endpoints $S_i=\set{s_j}{v_j\in V_i}$ contains the origin in the relative interior of its convex hull.
\end{proof}

From this proposition it is direct to deduce that Theorems~\ref{thm:d+1-3dd} and~\ref{thm:d+1-3dd_tverberg} are equivalent.

\section{Weak \CayleyG decompositions}\label{sec:degk}

This section is devoted to the proof of Theorem~\ref{thm:d+1-3dd_dual}.
\subsection{Subconfigurations and quotients}

The following proposition relates the degree of the restriction of a vector configuration to a subspace to the degree of its contraction. It will become a very useful tool for our proofs.
\begin{proposition}\label{prop:subspacequotient}
Let ${V}$ be a vector configuration and let ${W}\subset {V}$ be a subconfiguration of ${V}$ such that $\lin ({W})\cap {V}={W}$.
If we use the notation
\begin{itemize}
 \item $\rank ({V})=r$, $|{V}|=r+d+1$ and $\degG({V})=\dd$;
 \item $\rank ({W})=r_{W}$, $|{W}|=r_{W}+d_{W}+1$ and $\dd_{W}=\degG({W})$ (in $\RR^{r_{W}}$); and
 \item $\rank ({V}/{W})=r_{/{W}}$, $|{V}/{W}|=r_{/{W}}+d_{/{W}}+1$ and $\dd_{/{W}}=\degG({V}/{W})$,
\end{itemize}
then
\begin{align}
 r&=r_{W}+r_{/{W}},\notag\\
 d&=d_{W}+d_{/{W}}+1,\notag\\
 \dd&\geq \dd_{W}+\dd_{/{W}}.\label{eq:decompositioninequality}
 \end{align}
\end{proposition}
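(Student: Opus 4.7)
The first two equalities are bookkeeping. For $r=r_W+r_{/W}$, I will appeal directly to the definition of contraction: $V/W$ is obtained by projecting parallel to $\lin(W)$ onto a complementary subspace and then deleting the zero images, so $\rank(V/W)$ equals the codimension of $\lin(W)$ in $\RR^r$. For $d=d_W+d_{/W}+1$, the hypothesis $\lin(W)\cap V=W$ ensures that exactly the vectors of $W$ are projected to $\veczero$, so $|V/W|=|V|-|W|$; substituting the three cardinalities $r+d+1$, $r_W+d_W+1$ and $r_{/W}+d_{/W}+1$ and using the first equation yields the identity.

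The heart of the argument is the inequality $\dd\ge \dd_W+\dd_{/W}$, which I plan to prove by a hyperplane lifting argument. Pick a linear hyperplane $h_W$ of $\lin(W)$ attaining $|h_W^+\cap W|=r_W+\dd_W$, and a linear hyperplane $\tilde h$ of $V/W$ attaining $|\tilde h^+\cap (V/W)|=r_{/W}+\dd_{/W}$. By Lemma~\ref{lem:hyperplanescontraction}, $\tilde h$ pulls back under the projection $\pi$ to a linear hyperplane $h$ of $\RR^r$ that contains $\lin(W)$ and preserves the sign pattern on $V\setminus W$, so $|h^+\cap(V\setminus W)|=r_{/W}+\dd_{/W}$. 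Separately, extend $h_W$ arbitrarily to a linear hyperplane $\hat h$ of $\RR^r$ with $\hat h\cap\lin(W)=h_W$.

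The final step is to combine $h$ and $\hat h$ via the composition operation from Section~2.1. The key observation is that since $h\supseteq\lin(W)$ and $\lin(W)\cap V=W$, the vectors of $V$ lying on $h$ are exactly those of $W$. In the composition $h\circ\hat h$, every vector of $V\setminus W$ stays on its original ($h$-)side, while each vector of $W$ (all of which sit on $h$) is placed on the positive side precisely when it lies in $\hat h^+$; and $\hat h\cap W=h_W\cap W$, so the vectors of $W$ that end up in $(h\circ\hat h)^+$ are exactly those in $h_W^+\cap W$. Adding the two contributions,
\[
|(h\circ\hat h)^+\cap V|\;=\;(r_{/W}+\dd_{/W})+(r_W+\dd_W)\;=\;r+\dd_W+\dd_{/W},
\]
and the definition of $\degG(V)$ immediately gives $\dd\ge \dd_W+\dd_{/W}$.

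I do not expect any serious obstacle; the only subtle point is checking that the composition rule from Section~2.1 behaves correctly when the primary hyperplane $h$ passes through a whole sub-configuration $W$. This is exactly the situation the rule was designed for, and the hypothesis $\lin(W)\cap V=W$ is what prevents any vector of $V\setminus W$ from accidentally lying on $h$ and spoiling the count.
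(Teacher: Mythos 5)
Your argument is correct and matches the paper's proof essentially step for step: choose a worst hyperplane $h_W$ in $\lin(W)$ and a worst $\tilde h$ for $V/W$, pull $\tilde h$ back via Lemma~\ref{lem:hyperplanescontraction} to a hyperplane $h\supseteq\lin(W)$ of $\RR^r$, extend $h_W$ to a hyperplane $\hat h$ of $\RR^r$, and count $|(h\circ\hat h)^+\cap V|$. One small slip in the justification: the hypothesis $\lin(W)\cap V=W$ does \emph{not} ensure that the only vectors of $V$ lying on $h$ are those of $W$ --- $h$ is a full hyperplane of $\RR^r$ and contains any $v\in V\setminus W$ whose image lies on $\tilde h$. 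This is harmless, because any such vector that falls into $(h\circ\hat h)^+$ only increases the count, so the displayed equality is really a $\ge$, which is all that the inequality $\dd\ge\dd_W+\dd_{/W}$ requires; alternatively, perturb $\tilde h$ into general position with respect to $V/W$ before lifting to recover the equality.
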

\begin{proof}
By construction, $r=r_{W}+r_{/{W}}$. Moreover, counting the number of elements in ${V}$ we get $r+d+1=r_{W}+d_{W}+1+r_{/{W}}+d_{/{W}}+1$, which implies that $d=d_{W}+d_{/{W}}+1$. 

Since the degree of ${W}$ is $\dd_{W}$, there is an oriented hyperplane ${h}_{W}$ of $\lin ({W})$ that contains $r_{W}+\dd_{W}$ elements of ${W}$ in ${h}_{W}^+$. 
Let ${h}_{W}'$ be a hyperplane of $\RR^r$ such that ${h}'_{W}\cap \lin {W}={h}_{W}$. Note that such a hyperplane always exists, for example take the only hyperplane that contains ${h}_{W}$ and the orthogonal complement of $\lin ({W})$. 
Since ${V}/{W}$ has degree $\dd_{/{W}}$, there is an oriented hyperplane ${h}_{/{W}}$ of the quotient $V/W$ that has $r_{/{W}}+\dd_{/{W}}$ elements of~${V/W}$ at ${h}_{/{W}}^+$. 
By Lemma~\ref{lem:hyperplanescontraction}, there is a hyperplane~${h}_{/{W}}'$ of~$\RR^r$ that contains $\lin ({W})$ such that ${h}_{/{W}}'^+\cap V={h}_{/{W}}^+\cap V/W$ (identifying elements of $V/W$ with the corresponding elements of $V$). Then 
\begin{align*}
r+\dd&\geq |({h}_{/{W}}'\circ {h}_{W}')\cap {V}|\\&=|{h}_{/{W}}^+\cap V/W|+|{h}_{{W}}^+\cap W|=r_{/{W}}+\dd_{/{W}}+r_{W}+\dd_{W}.
\end{align*} 
And therefore, $\dd_{{W}}+\dd_{/{W}}\leq \dd$.
\end{proof}

Observe that we took the ``worst'' hyperplane in $\RR^r$ containing $\lin ({W})$ (worst in terms of $|{h}^+\cap V|$), and slightly perturbed it so that it cut $\lin ({W})$ in its worst hyperplane. The proposition states that this perturbed hyperplane cannot be worse than the worst hyperplane that cuts $V$.

\subsection{Some simplifications}
Before continuing to the proof of Theorem~\ref{thm:d+1-3dd_dual}, we will show how it can be reduced to some special cases of vector configurations.

\subsubsection{Totally cyclic configurations}

\begin{definition}
 A vector configuration $V$ of rank~$r$ is \defn{totally cyclic}, if either $r=0$ or $|h^+\cap V| \geq 1$ for every hyperplane $h$.
\end{definition}

Totally cyclic configurations are precisely those that arise as Gale duals of point configurations (see, for example, \cite[Corollary 6.16]{ZieglerLecturesOnPolytopes}).

\begin{lemma}\label{lem:totallycyclicGaledual}
A vector configuration $V$ is the Gale dual of a point configuration (up to rescaling by positive scalars) if and only if it is totally cyclic. 
\end{lemma}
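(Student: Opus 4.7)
The plan is to reduce the lemma to a standard application of a Farkas-type alternative via the following chain of equivalences: $V$ is the Gale dual of a point configuration up to positive rescaling if and only if there exist scalars $\alpha_i>0$ with $\sum_i\alpha_i v_i=\veczero$ (equivalently, $V$ itself is a positive vector of $\cM(V)$), which in turn is equivalent to $V$ being totally cyclic.

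For the first equivalence I would begin by recording the defining property of Gale duality: if $A=\{a_1,\dots,a_n\}\subset\RR^d$ is full-dimensional with Gale dual $V=\Gale A$ of rank $n-d-1$, then $\Dep(V)\subset\RR^n$ coincides with the space of vectors of the form $(\sprod{c}{a_i}+c_0)_{i=1}^n$ for $c\in\RR^d$, $c_0\in\RR$. Taking $c=\veczero$ and $c_0=1$ shows that $\vecone\in\Dep(V)$, that is, $\sum_i v_i=\veczero$. Conversely, given a rank $n-d-1$ configuration $V$ with $\sum_i v_i=\veczero$, the subspace $\Dep(V)\subset\RR^n$ has dimension $d+1$ and contains $\vecone$; extending to a basis $\vecone,\lambda^{(1)},\dots,\lambda^{(d)}$ and setting $a_i:=(\lambda^{(1)}_i,\dots,\lambda^{(d)}_i)\in\RR^d$ produces a full-dimensional point configuration $A$ (precisely because the chosen basis is linearly independent) whose Gale dual is $V$ by construction. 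Rescaling each $v_i$ by a positive $\alpha_i$ leaves the oriented matroid unchanged but replaces the required identity by $\sum_i\alpha_i v_i=\veczero$, completing this equivalence.

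For the second equivalence I would invoke Stiemke's lemma, a standard companion of the Farkas Lemma already used in the proof of Proposition~\ref{prop:primaldual}: for a full-rank configuration $V$, exactly one of the two alternatives holds --- either there exist $\alpha_i>0$ with $\sum_i\alpha_i v_i=\veczero$, or there exists a nonzero linear functional $\phi$ with $\sprod{\phi}{v_i}\ge 0$ for all $i$. Since $V$ spans $\RR^r$, the second alternative forces $\sprod{\phi}{v_j}>0$ for some $j$, and after replacing $\phi$ by $-\phi$ we obtain a hyperplane $h=\ker\phi$ with $V\cap h^+=\emptyset$, contradicting total cyclicity; conversely, the presence of a positive dependence directly prevents the existence of any such $\phi$ and so makes $V$ totally cyclic. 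The degenerate case $r=0$ is built into the definition and corresponds to $V$ being the zero configuration, tautologically the Gale dual of the vertex set of an $(n-1)$-simplex. I do not anticipate any real obstacle beyond matching the open/closed halfspace conventions of Definition~\ref{dual-degree-definition-label} to the strict/non-strict inequalities appearing in Stiemke's alternative; once this bookkeeping is done, both implications are immediate.
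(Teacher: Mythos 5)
Your proof is correct, and it is essentially the standard argument; the paper itself offers no proof of this lemma, instead citing \cite[Corollary 6.16]{ZieglerLecturesOnPolytopes}, so there is no proof in the source to compare against. Your two-step reduction is exactly the textbook route: Gale duality of a full-dimensional point configuration in $\RR^d$ is captured by $\Dep(V)$ being a $(d{+}1)$-dimensional subspace of $\RR^n$ containing $\vecone$, so ``Gale dual up to positive rescaling'' amounts to the existence of a positive dependence $\sum_i\alpha_i v_i=\veczero$ (that is, $V$ itself being a positive vector of $\cM(V)$); and Stiemke's transposition theorem, with the full-rank hypothesis used to pass from ``$A^{\!\top}\phi\neq 0$'' to ``$\phi\neq 0$,'' converts this into the statement that no hyperplane $h$ has $h^+\cap V=\emptyset$, which is precisely total cyclicity, with the $r=0$ case handled tautologically. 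One spot where you should be slightly more careful in phrasing, though it is standard and harmless: the configuration you construct from the basis $\vecone,\lambda^{(1)},\dots,\lambda^{(d)}$ has the same $\Dep$-space as $V$, hence equals $V$ only up to an invertible linear change of coordinates on $\RR^r$; since the paper, like most of the literature, regards Gale duals as defined up to exactly such a transformation, this is the right notion, but the sentence ``whose Gale dual is $V$ by construction'' should be read with that convention in mind.
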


\begin{lemma}\label{lem:totallycyclicsubconf}
Any vector configuration $V$ with $\codegG(V)\geq 1$ contains a totally cyclic subconfiguration $W\subseteq V$ with $\codegG(W)=\codegG(V)$.
\end{lemma}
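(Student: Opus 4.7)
The strategy is to take $W \subseteq V$ to be the support of the unique maximal positive vector of $\cM(V)$, i.e., the union of all subsets of $V$ whose convex hull contains the origin in its relative interior. This union is again the support of a positive vector, since the sum $\lambda^{(1)}+\lambda^{(2)}$ of two non-negative vanishing dependencies on $V$ is again one, with support $\supp(\lambda^{(1)})\cup\supp(\lambda^{(2)})$; equivalently, $W = V\cap L$, where $L$ is the lineality space of $\cone(V)$. The hypothesis $\codegG(V)\geq 1$ says $V$ lies in no open halfspace, so by Farkas' lemma there is a non-trivial non-negative dependency on $V$, whence $W\neq\emptyset$. The maximal dependency $\sum_{w\in W}\mu_w\, w = 0$ then has $\mu_w>0$ for every $w$; pairing with any nonzero linear functional $\nu$ on $\lin(W)$, the relation $\sum \mu_w \langle \nu, w\rangle = 0$ with strictly positive weights forces $\langle \nu, w\rangle>0$ for some $w\in W$, so $W$ is totally cyclic in $\lin(W)$.

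The second step is to build a hyperplane $h\subset\RR^r$ with $W\subset h$ and $V\setminus W\subset h^+$. In the quotient $\RR^r/L$ the image of $\cone(V)$ is a pointed cone, so it admits a linear functional $\ol u$ strictly positive on its nonzero elements; pulling back yields a functional $u\colon\RR^r\to\RR$ that vanishes on $L$ (hence on $W$) and is strictly positive on $V\setminus W$, and $h:=\ker u$ is the desired hyperplane.

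The third and conceptually key step is to show $\codegG(W)=\codegG(V)$ using $h$. The inequality $\codegG(V)\geq\codegG(W)$ is routine: any hyperplane $h'$ of $\RR^r$ either contains $\lin(W)$ (so $W\subset\ol{h'}^+$ and $|\ol{h'}^+\cap W|=|W|\geq\codegG(W)$) or cuts $\lin(W)$ in a hyperplane $h_W:=h'\cap\lin(W)$ with $|\ol{h'}^+\cap W|=|\ol{h_W}^+\cap W|\geq\codegG(W)$; in either case $|\ol{h'}^+\cap V|\geq\codegG(W)$. For the reverse inequality, given a hyperplane $h_W$ of $\lin(W)$ I extend it to an arbitrary $h'\subset\RR^r$ with $h'\cap\lin(W)=h_W$ and form the composition $h^*:=(-h)\circ h'$. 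Since $V\setminus W\subset h^+=(-h)^-$ strictly, no such vector lies in $\ol{h^*}^+$; and since $W\subset h=-h$, each $w\in W$ lies in $\ol{h^*}^+$ exactly when $w\in\ol{h'}^+$, i.e., when $w\in\ol{h_W}^+$. Hence $|\ol{h^*}^+\cap V|=|\ol{h_W}^+\cap W|$, and minimizing over $h_W$ gives $\codegG(V)\leq\codegG(W)$. The main obstacle is this composition argument: it is the mechanism that converts a hyperplane tailored to $\lin(W)$ into an equally effective hyperplane of the ambient $\RR^r$ that is blind to $V\setminus W$.
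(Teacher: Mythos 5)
Your proof is correct, and it takes a genuinely different route from the paper's. The paper argues by induction on the rank~$r$: if $V$ is not already totally cyclic, pick a hyperplane $h$ (spanned by vectors of $V$) with $h^-\cap V=\emptyset$, set $W=V\cap h$ of rank $r-1$, and use Proposition~\ref{prop:subspacequotient} in the form $\codegG(V)\le\codegG(W)+\codegG(V/W)$ together with $\codegG(V/W)=0$ to conclude $\codegG(V)=\codegG(W)$; then recurse. You instead identify the eventual fixed point of that recursion in one shot: $W=V\cap L$ where $L$ is the lineality space of $\cone(V)$, equivalently the union of supports of all positive vectors of $\cM(V)$, and you verify directly that this $W$ is totally cyclic (via the strictly positive maximal dependency) and that $\codegG(W)=\codegG(V)$. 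Your inequality $\codegG(V)\ge\codegG(W)$ is a routine restriction argument, and the reverse inequality is where your work lies: you separate $V\setminus W$ strictly from $L$ by a hyperplane $h$, and then the composition $(-h)\circ h'$ converts any codegree-minimizing hyperplane $h_W$ of $\lin(W)$ into a hyperplane of $\RR^r$ whose closed positive side misses $V\setminus W$ entirely and agrees with $\ol{h_W}^+$ on $W$. This is a legitimate replacement for Proposition~\ref{prop:subspacequotient}; both approaches rely on the same hyperplane-composition mechanic, but the paper packages it once inside Proposition~\ref{prop:subspacequotient} and then iterates, while you deploy it directly against the final $W$. The paper's version is shorter given that Proposition~\ref{prop:subspacequotient} is already proved; yours has the virtue of making the subconfiguration $W$ explicit and coordinate-free, and it avoids the (mild) bookkeeping of the induction. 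One small point worth flagging: when $\lin(W)=\{0\}$ (i.e., $W$ consists only of zero vectors), there are no hyperplanes $h_W$ of $\lin(W)$ to extend, so the reverse inequality needs the degenerate convention $\codegG(W)=|W|$; your separating functional $u$ still gives a hyperplane $h$ with $\ol h^-\cap V=W$, which yields $\codegG(V)\le|W|=\codegG(W)$ directly, so the argument goes through but deserves a sentence.
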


\begin{proof}
 The proof is by induction on the rank $r$ of $V$, and trivial if $r=0$ or $r=1$. 
 If $V$ is not totally cyclic, there must be a hyperplane ${h}$ with ${h}^-\cap V=\emptyset$, which we can assume to be spanned by vectors in $V$. Let $W=V\cap {h}$, and observe that $\codegG(V)\geq \codegG(W)$. Moreover, $\codegG(V/W)=0$ because ${h}^-\cap V=\emptyset$. Finally, since $\codegG(V)\leq \codegG(W)+\codegG(V/W)$ by Proposition~\ref{prop:subspacequotient}, we see that $\codegG(V)=\codegG(W)$, and the result follows by induction.
\end{proof}

\subsubsection{Irreducible configurations}

\begin{lemma}\label{lem:remove0}
 $\degG(V)=\degG(V\cup\{\veczero\})$ for any vector configuration~$V$.
\end{lemma}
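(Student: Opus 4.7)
The plan is to argue directly from the definition in \eqref{eq:defdegG}. Set $V' := V \cup \{\veczero\}$. The two quantities entering the definition of $\degG$ are the rank $r$ of the configuration and the maximum, over linear hyperplanes $h$, of $|h^+ \cap V'|$. I would handle these separately.

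First I would observe that $V$ and $V'$ have the same rank: adding the zero vector cannot change the linear span, so $\rank(V') = \rank(V) = r$. Thus the maximum in \eqref{eq:defdegG} is taken over linear hyperplanes in the same ambient space $\RR^r$ in both cases.

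Next, the key point: every linear hyperplane $h$ in $\RR^r$ passes through the origin, so $\veczero \in h$ and in particular $\veczero \notin h^+$ (recall $h^+$ is the \emph{open} positive halfspace). Therefore, for every linear hyperplane $h$,
\[
|h^+ \cap V'| \;=\; |h^+ \cap (V \cup \{\veczero\})| \;=\; |h^+ \cap V|.
\]
Taking the maximum over $h$ on both sides and subtracting $r$ gives $\degG(V') = \degG(V)$, which is the claim.

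There is essentially no obstacle here; the lemma is a formal consequence of the fact that $\veczero$ lies on every linear hyperplane, and is stated mainly to allow later arguments to assume without loss of generality that $\veczero \in V$ (or that it does not).
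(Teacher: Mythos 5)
Your proof is correct and is essentially the same one-line argument the paper gives: since $\veczero$ lies on every linear hyperplane, $h^+\cap V = h^+\cap(V\cup\{\veczero\})$, and the claim follows from the definition of $\degG$. The extra remark about the rank being unchanged is a sensible (if implicit in the paper) completeness check.
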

\begin{proof}
 For every linear hyperplane ${h}$, we have ${h}^+\cap V={h}^+\cap (V\cup\{\veczero\})$; hence, $\degG (V)=\degG (V\cup\{\veczero\})$.
\end{proof}

Therefore, adding and removing copies of the origin to a vector configuration does not change its degree, which motivates the following definition.

\begin{definition}
 We say that a vector configuration ${V}$ is \defn{irreducible}, if it does not contain the origin.
\end{definition}

Here is a simple observation about irreducible vector configurations.

\begin{proposition}\label{prop:easybound}
An irreducible vector configuration ${V}\in \RR^r$ of dual degree $\dd$ cannot contain more than $2r+2\dd$ vectors.
\end{proposition}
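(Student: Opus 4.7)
The plan is to exploit the fact that irreducibility rules out the origin, so every vector of $V$ spans a line, and to split $V$ cleanly by a generic hyperplane and then bound each side by the definition of the dual degree.

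First, I would observe that since $V$ is irreducible, every $v \in V$ is non-zero, so the set $v^{\perp}$ is a proper hyperplane of $\RR^r$. Therefore the set of linear hyperplanes $h \subset \RR^r$ that contain at least one vector of $V$ is a finite union of proper subvarieties in the space of hyperplanes, hence of measure zero. By a standard genericity argument I can choose an oriented linear hyperplane $h$ such that $h \cap V = \emptyset$; equivalently, $V = (h^{+} \cap V) \uplus (h^{-} \cap V)$ as a disjoint union.

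Next, by the definition of the dual degree in~\eqref{eq:defdegG}, the positive open halfspace bound yields
\[
|h^{+} \cap V| \le r + \dd.
\]
Applying the same inequality to the hyperplane $h'$ obtained by reversing the orientation of $h$ (so that $(h')^{+} = h^{-}$), we get $|h^{-} \cap V| \le r + \dd$ as well. Adding the two inequalities gives
\[
|V| = |h^{+} \cap V| + |h^{-} \cap V| \le 2(r + \dd),
\]
which is the desired bound. The only non-routine step is the existence of a hyperplane avoiding all vectors of $V$, which is immediate from irreducibility; without this hypothesis the argument would fail precisely because a copy of $\veczero$ lies in every hyperplane and could not be split off.
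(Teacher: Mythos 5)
Your proof is correct and takes essentially the same approach as the paper: choose a generic hyperplane avoiding all vectors of $V$ (possible by irreducibility) and apply the $r+\dd$ bound to each open halfspace. You simply spell out the genericity and orientation-reversal steps in more detail than the paper does.
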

\begin{proof}
 Take any generic linear hyperplane ${h}$, so that $V \cap H = \emptyset$. By the definition of $\degG$, there are at most $r+\dd$ vectors in~${h}^+$ and in~${h}^-$.
\end{proof}

In terms of Gale duality, if $A'$ is a pyramid over $A$ (i.e., $A = A' \cup \{p\}$ and $p\notin \aff(A')$), then $\Gale {A'}=\Gale A\cup \{\veczero\}$, adding the origin to $\Gale A$ (cf. \cite[Lecture 6]{ZieglerLecturesOnPolytopes}).
Therefore, rephrasing these statements in the primal setting proves two results that we alluded to before:

\begin{corollary}\label{cor:pyramid}
 If $A'$ is a pyramid over $A$, then $\degc{A'}=\degc{A}$.
\end{corollary}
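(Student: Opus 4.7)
The plan is to deduce this corollary directly from the dual picture, using the machinery already set up. By Proposition~\ref{prop:primaldual}, the combinatorial degree of a point configuration equals the dual degree of its Gale dual, so it suffices to show that $\degG(\Gale{A'}) = \degG(\Gale{A})$.

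First I would invoke the standard fact about Gale duals of pyramids (referenced in the paragraph immediately preceding the corollary, see~\cite[Lecture~6]{ZieglerLecturesOnPolytopes}): if $A' = A \cup \{p\}$ with $p \notin \aff(A)$, then a Gale transform of $A'$ can be obtained from one of $A$ by adjoining the zero vector, i.e.\ $\Gale{A'} = \Gale{A} \cup \{\veczero\}$. This is the only nontrivial ingredient, and it is a routine consequence of the definition of the Gale transform via affine dependences (adding a coordinate for $p$ simply produces an additional zero entry in every affine dependence of $A'$).

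Then Lemma~\ref{lem:remove0} yields $\degG(\Gale{A} \cup \{\veczero\}) = \degG(\Gale{A})$, and combining this with Proposition~\ref{prop:primaldual} gives
\[
  \degc(A') \;=\; \degG(\Gale{A'}) \;=\; \degG(\Gale{A} \cup \{\veczero\}) \;=\; \degG(\Gale{A}) \;=\; \degc(A),
\]
as desired. There is no real obstacle here: the corollary is essentially a dictionary translation of Lemma~\ref{lem:remove0}, so the only thing to verify carefully is the Gale-duality statement that pyramidality on the primal side corresponds to adding a zero vector on the dual side.
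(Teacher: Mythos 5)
Your proof is correct and follows exactly the paper's approach: the paper likewise deduces this corollary by noting that the Gale dual of a pyramid is obtained by adjoining the origin, and then invoking Lemma~\ref{lem:remove0} together with Proposition~\ref{prop:primaldual}. (Incidentally, the paper's parenthetical has the roles of $A$ and $A'$ accidentally swapped; you state the pyramid condition correctly.)
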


\begin{corollary}\label{cor:easybound}
Any $d$-dimensional configuration $A$ of $n$ points with 
$d \ge \degc(A)+\frac{n-1}{2}$
is a pyramid. 
\end{corollary}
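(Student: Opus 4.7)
The plan is to deduce the corollary from Proposition~\ref{prop:easybound} via Gale duality, as the paper hints in the paragraph immediately preceding the statement.

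First I would set up notation. Let $A$ be a $d$-dimensional configuration of $n$ points, and let $V = \Gale{A}$ be its Gale dual. Then $V$ is a vector configuration with $|V| = n$ in $\RR^r$ where $r = n - d - 1$, and by Proposition~\ref{prop:primaldual} it has $\degG(V) = \degc(A) =: \dd$.

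Next I would translate the assumption ``$A$ is not a pyramid'' to the dual side. As recalled in the paragraph preceding the corollary, $A' $ being a pyramid over $A$ corresponds to $\Gale{A'} = \Gale{A} \cup \{\veczero\}$, so $A$ fails to be a pyramid precisely when $V$ is irreducible in the sense of the paper (\ie $\veczero \notin V$).

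Now I would argue by contradiction: suppose that $A$ is not a pyramid, while $d \ge \dd + \frac{n-1}{2}$. Since $V$ is then irreducible of rank $r$ and dual degree $\dd$, Proposition~\ref{prop:easybound} gives
\[ n = |V| \le 2r + 2\dd = 2(n-d-1) + 2\dd, \]
which rearranges to $d \le \dd + \frac{n-2}{2} < \dd + \frac{n-1}{2}$, contradicting the hypothesis. Hence $A$ must be a pyramid.

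There is no real obstacle here; the only thing to be careful about is the direction of the Gale-duality translation (pyramid $\leftrightarrow$ origin in the dual, not-pyramid $\leftrightarrow$ irreducible dual), which is exactly the content of the remark the authors place just before the corollary. Once that is in hand, the arithmetic of Proposition~\ref{prop:easybound} closes the argument immediately.
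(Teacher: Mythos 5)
Your proof is correct and follows exactly the route the paper indicates: translate via Gale duality to the dual configuration $V=\Gale{A}$ of rank $r=n-d-1$, note that $A$ is a pyramid iff $\veczero\in V$, and then apply the counting bound of Proposition~\ref{prop:easybound} to the irreducible case. The arithmetic and the duality translation are both handled correctly.
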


\subsubsection{Pure vector configurations}

The translation of Proposition~\ref{prop:subconfigurations} into the primal setup reads as follows.
\begin{corollary}\label{cor:degpointdeletioncontraction}
For any point configuration $A$ and for any point $a\in {A}$, $\degc({A}\setminus a)\leq \degc({A})$ and $\degc({A}/a)\leq \degc({A})$.
\end{corollary}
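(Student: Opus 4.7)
The plan is to derive this corollary directly from its dual counterpart, Proposition~\ref{prop:subconfigurations}, via Gale duality. First, I would fix a Gale dual $V = \Gale{A}$ of $A$, and let $v \in V$ denote the vector corresponding to the point $a \in A$. By Proposition~\ref{prop:primaldual}, $\degc(A) = \degG(V)$, so it suffices to re-express the degrees of $A \setminus a$ and $A/a$ in terms of operations on $V$.

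Next, I would invoke the standard duality between deletion and contraction in oriented matroid theory: deleting a point $a$ in the primal corresponds to contracting the vector $v$ in the Gale dual, while contracting $a$ in the primal corresponds to deleting $v$ in the Gale dual. Concretely, $\Gale{(A\setminus a)} = V/v$ and $\Gale{(A/a)} = V\setminus v$. Applying Proposition~\ref{prop:primaldual} once more then gives
\[
  \degc(A\setminus a) \;=\; \degG(V/v) \qquad \text{and} \qquad \degc(A/a) \;=\; \degG(V\setminus v).
\]

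Finally, Proposition~\ref{prop:subconfigurations} asserts that both $\degG(V\setminus v) \leq \degG(V)$ and $\degG(V/v) \leq \degG(V)$, which combined with the identities above yields the two desired inequalities. The only subtlety is verifying that the Gale dual of a primal deletion is indeed the contraction of the corresponding vector (and vice versa); this is a classical fact of oriented matroid duality (see, e.g., \cite[Lecture 6]{ZieglerLecturesOnPolytopes} or \cite[Chapter 9]{OrientedMatroids1993}), and it falls out of the characterization in Lemma~\ref{lem:gale} by matching positive vectors on index complements. Once this translation is set up, the argument is essentially immediate and there is no substantive obstacle to overcome.
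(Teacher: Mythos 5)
Your proof is correct and follows the same route the paper takes: the authors introduce Corollary~\ref{cor:degpointdeletioncontraction} explicitly as ``the translation of Proposition~\ref{prop:subconfigurations} into the primal setup,'' which is exactly the Gale-duality argument you spell out (including getting the deletion/contraction swap in the right direction and noting that the apex case $v=\veczero$ is handled by the paper's convention that contraction of $\veczero$ is deletion).
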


Here, the contraction ${A}/a$ is defined analogously as for vector configurations, using the homogeneization $a_i\mapsto \binom{a_i}{1}$ (see \cite[Lecture 6]{ZieglerLecturesOnPolytopes}).
This explains one of the reasons why it is natural to allow configurations that admit repeated points: even if ${A}$ has no repeated points, ${A}/a$ might contain some.
However, it is straightforward to see that deleting repeated points from~${A}$ changes neither the degree nor the property of having a weak Cayley decomposition: 

\begin{lemma}\label{lem:removerepeated}
If the point configuration $A'$ is obtained from $A$ after deleting all repeated points, then 
$\degc({A})=\degc({A}')$. Moreover, ${A}$ admits a (weak) Cayley decomposition of length $m$ if and only if ${A}'$ does. 
\end{lemma}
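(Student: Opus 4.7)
The plan is to argue directly from the definitions, leveraging throughout that $\conv(A)=\conv(A')$ and that deleting duplicates does not change $\conv$ nor the proper faces of $\conv(A)$.

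For the equality $\degc(A)=\degc(A')$, the inequality $\degc(A')\le \degc(A)$ is immediate: any interior face $S\subseteq A'$ is also a subset of $A$, and since the convex hulls coincide, $\conv(S)$ does not lie on the boundary of $\conv(A)$, so $S$ is an interior face of $A$ of the same dimension (hence the same codimension). For the reverse direction, given an interior face $S\subseteq A$ realizing the maximum codimension, I would replace each point of $S$ by its unique representative in $A'$ to obtain $\tilde S\subseteq A'$ with $\conv(\tilde S)=\conv(S)$. Then $\tilde S$ is an interior face of $A'$ of the same dimension, yielding $\degc(A')\ge \degc(A)$.

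For the second assertion, the key is a rigidity observation: in any weak Cayley decomposition $A=A_0\uplus A_1\uplus\cdots\uplus A_m$, all copies of a given point of $A$ must lie in the same block. Indeed, suppose two copies $a,a'$ of a point $p$ belong to distinct blocks, and that at least one of them, say $a$, lies in some $A_i$ with $i\ge 1$. By hypothesis $A\setminus A_i=F\cap A$ for a proper face $F$ of $\conv(A)$. Since $a'\in A\setminus A_i$, we have $p=a'\in F$, hence $a\in F\cap A=A\setminus A_i$, contradicting $a\in A_i$. The case $a\in A_0$, $a'\in A_j$ with $j\ge 1$ is symmetric. With this rigidity, the assignment $A_i\mapsto A'_i:=A_i\cap A'$ is a length-preserving bijection between weak Cayley decompositions of $A$ and of $A'$, with inverse obtained by including all $A$-copies of each point in $A'_i$. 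Non-emptiness of the factors $A'_i$ (for $i\ge 1$) follows from non-emptiness of $A_i$ together with the rigidity, and the defining face condition transfers between the two via the identity $F\cap A'=(F\cap A)\cap A'$. The combinatorial Cayley case is handled by the same argument, additionally enforcing $A_0=A'_0=\emptyset$.

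The main obstacle is establishing the rigidity observation above, which rests on reading ``$A\setminus A_i$ is the set of points of a proper face $F$'' as the equality $A\setminus A_i=F\cap A$, rather than a mere inclusion. Once that reading is fixed, the remainder of the proof is essentially bookkeeping.
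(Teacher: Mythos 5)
Your treatment of the degree equality and of the weak Cayley case is correct, and your reading of ``$A\setminus A_i$ is the set of points of a proper face'' as the equality $A\setminus A_i=F\cap A$ is indeed what the paper intends: it is exactly what the Gale-dual translation of Lemma~\ref{lem:gale} and Proposition~\ref{prop:primaldual} relies on. With that reading the rigidity observation and the resulting bijection between weak Cayley decompositions of $A$ and of $A'$ are fine.

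The gap lies in the closing claim that the combinatorial Cayley case is ``handled by the same argument.'' A combinatorial Cayley decomposition only requires that each $\conv\bigl(\bigcup_{i\in I}A_i\bigr)$ be a proper face; it does \emph{not} require $\bigcup_{i\in I}A_i$ to exhaust the $A$-points of that face, so the equality $A\setminus A_i=F\cap A$ on which your rigidity argument rests is simply not part of the hypothesis. And rigidity genuinely can fail: let $\conv(A)$ be a triangle with vertices $p,q,r$, put $A=\{a,a',q,r\}$ with $a=a'=p$, and set $A_1=\{a,q\}$, $A_2=\{a',r\}$. Each $\conv(A_i)$ is an edge, so this is a combinatorial Cayley decomposition of length $2$, yet the two copies of $p$ lie in distinct factors. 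Once rigidity fails, the restriction $A_i\cap A'$ depends on which copy survives in $A'$ and can fail to be a combinatorial Cayley decomposition of $A'$ at all (e.g.\ take $\conv(A)$ a pyramid over a hexagon with the duplicated point a hexagon vertex shared between the hexagon factor and the apex-edge factor; removing the ``wrong'' copy turns one factor's convex hull into a non-face pentagon). The combinatorial case therefore needs its own argument: one direction is immediate by replacing each point of $A'_i$ by all its $A$-copies, but the converse requires first passing to a rigid decomposition (e.g.\ via the projection characterization the paper asserts) or otherwise arguing directly that the shrunken convex hulls are still faces.
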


For this reason, we usually only consider point configurations without repeated points. Being \defn{pure} is the corresponding concept for vector configurations.

\begin{definition}\label{def:pure}
A vector configuration $V\subset\RR^r$ is \defn{pure} if and only if either $r=0$, or for every linear hyperplane~${h}$, $|{h}^+\cap V|\geq 2$ or $|{h}^- \cap V|\geq 2$. 
\end{definition}

The following lemma is the motivation for this definition. We omit its proof, which follows from oriented matroid duality (see for example \cite[Corollary 6.15]{ZieglerLecturesOnPolytopes}).

\begin{lemma}\label{lem:defpure}
 A point configuration $A$ has no repeated points if and only if its Gale dual $V$ is pure.
\end{lemma}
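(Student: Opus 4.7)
The plan is to deduce the equivalence from the correspondence that Gale duality sets up between circuits of the affine oriented matroid of $A$ and cocircuits of $\cM(V)$, both supported on the same index set and carrying the same sign pattern. Concretely, if the rows of a Gale dual matrix form a basis of the space of affine dependences of $A$, then every affine dependence on $A$ has the form $\lambda_i=\langle c,v_i\rangle$ for some $c\in\RR^r$, and its signs at each index match the signs of the corresponding cocircuit of $V$ read off the hyperplane $c^\perp$.

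I would first pin down the affine circuits of size at most $2$. A single-element support $\{i\}$ cannot support an affine dependence, since $\sum_k\lambda_k=0$ together with $\lambda_k=0$ for $k\ne i$ forces $\lambda_i=0$. A two-element support $\{i,j\}$ supports an affine dependence if and only if $\lambda_i=-\lambda_j\ne 0$ and $\lambda_i(a_i-a_j)=0$, \ie if and only if $a_i=a_j$, in which case the sign pattern is $(+_i,-_j)$. Hence $A$ contains a repeated point if and only if $\cM(V)$ has a cocircuit with support exactly $\{i,j\}$ and opposite signs on the two elements.

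For the ``if'' direction of the lemma, such a cocircuit is witnessed by a hyperplane $h$ with $V\cap h=V\setminus\{v_i,v_j\}$, $v_i\in h^+$ and $v_j\in h^-$, giving $|h^+\cap V|=|h^-\cap V|=1$, so $V$ is not pure. For the converse I start from a hyperplane $h$ with $|h^+\cap V|\le 1$ and $|h^-\cap V|\le 1$. Since $A$ is full-dimensional, $V$ spans $\RR^r$, so $V\not\subseteq h$. The case with only one vector strictly off $h$ would produce a cocircuit of $V$ of support size one, hence a size-one affine circuit of $A$, which was just excluded. The remaining possibility is $|h^+\cap V|=|h^-\cap V|=1$, yielding the opposite-sign two-element cocircuit that encodes a repeated point in $A$.

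The main piece of care is tracking signs through Gale duality, to ensure that the affine circuit $(+_i,-_j)$ really corresponds to the opposite-sign cocircuit configuration rather than to a same-side one; this is what delivers genuine non-purity, and it is settled by the explicit formula $\lambda_i=\langle c,v_i\rangle$. The degenerate case $r=0$ is immediate: purity holds vacuously by definition, while $n=d+1$ together with $A$ full-dimensional forces $A$ to be affinely independent and in particular to have no repeated points.
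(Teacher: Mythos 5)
Your proof is correct and takes exactly the route the paper alludes to when it omits the argument, namely the Gale/oriented-matroid duality under which circuits of the affine oriented matroid of $A$ correspond, with matching sign patterns, to cocircuits of $\cM(V)$, specialized to supports of size at most two. The one step you leave implicit in the converse direction — that the size-two covector $(\{i\},\{j\})$ you obtain is in fact a cocircuit — follows immediately once coloops are ruled out (any cocircuit conformal to it would otherwise have support of size one), so there is no real gap.
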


Using that deletion and contraction are dual operations (see for example \cite[Section 6.3(d)]{ZieglerLecturesOnPolytopes}), Lemma~\ref{lem:removerepeated} get translated as follows.
\begin{lemma}\label{lem:purecontraction}
Each totally cyclic vector configuration $V$ contains a pure subconfiguration $W$ with $\degG(V)=\degG(V/W)$ such that $V$ admits a weak \CayleyG decomposition of length $m$ if and only 
$V/W$ does.
\end{lemma}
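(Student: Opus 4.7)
My plan is to obtain Lemma~\ref{lem:purecontraction} as the dual statement of Lemma~\ref{lem:removerepeated} via Gale duality, exploiting that deletion of points in a primal configuration corresponds to contraction of the respective vectors in the Gale dual. Since $V$ is totally cyclic, I would first apply Lemma~\ref{lem:totallycyclicGaledual} to write $V$ as the Gale dual (up to positive rescaling of each vector) of a point configuration $A = \{a_1,\dots,a_n\}$; this rescaling does not affect $\cM(V)$ and hence preserves both $\degG(V)$ and the existence of weak \CayleyG decompositions.

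Next, I would let $A' \subseteq A$ be the subconfiguration obtained by keeping only one representative of each class of coincident points, and set $W := \{v_i \in V : a_i \in A \setminus A'\}$. The crux is the identification, via \cite[Section 6.3(d)]{ZieglerLecturesOnPolytopes}, of $V/W$ with the Gale dual of $A'$ (again up to positive rescaling), which relies on the standard fact that primal deletion and dual contraction are exchanged by Gale duality. Since $A'$ has no repeated points, Lemma~\ref{lem:defpure} then yields that $V/W$ is pure.

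With this correspondence in place, the conclusion is essentially bookkeeping. Lemma~\ref{lem:removerepeated} gives $\degc(A) = \degc(A')$ and that $A$ admits a weak Cayley decomposition of length~$m$ if and only if $A'$ does. Applying Proposition~\ref{prop:primaldual} to the pairs $(A,V)$ and $(A',V/W)$ translates these facts back to the dual, giving
\[
\degG(V) \;=\; \degc(A) \;=\; \degc(A') \;=\; \degG(V/W),
\]
and the biconditional that $V$ admits a weak \CayleyG decomposition of length~$m$ if and only if $V/W$ does.

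The main obstacle I expect is not conceptual but one of careful bookkeeping: keeping straight that the deletion of a primal point corresponds exactly to the contraction of the corresponding dual vector (not of some other one), and verifying that the positive rescaling implicit in Lemma~\ref{lem:totallycyclicGaledual} genuinely preserves every invariant under consideration. Once this is checked, the lemma follows as a direct dualization of the primal statement.
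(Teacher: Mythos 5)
Your argument is correct and is essentially the proof the paper has in mind: the paper gives only a one-line justification (``Using that deletion and contraction are dual operations \dots, Lemma~\ref{lem:removerepeated} gets translated as follows''), and you have simply spelled out that translation. You correctly use Lemma~\ref{lem:totallycyclicGaledual} to realize $V$ as a Gale dual, take $W$ to be the dual vectors of the discarded repeated points so that $V/W$ is (a positive rescaling of) $\Gale{A'}$, invoke Lemma~\ref{lem:defpure} for purity of $V/W$, and then push Lemma~\ref{lem:removerepeated} through Proposition~\ref{prop:primaldual}; the observation that positive rescaling preserves $\cM(V)$, and hence $\degG$ and weak Cayley$\Gale{}$ decompositions, closes the only potential gap. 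The one point worth flagging is that the lemma's phrasing (``pure subconfiguration $W$'') is best read, as you implicitly do and as the subsequent applications require, as asserting that the quotient $V/W$ is pure.
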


Actually, it is easy to prove that this lemma also holds when $V$ is not totally cyclic, but we will only need this formulation.
The next lemma also follows directly from the definition.
\begin{lemma}\label{lem:quotientsofpurearepure}
 If $V$ is a pure vector configuration, then $V/v$ is pure for each $v\in V$.
\end{lemma}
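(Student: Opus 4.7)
The plan is to check the defining inequality of purity (Definition~\ref{def:pure}) for each linear hyperplane in the ambient space of $V/v$ by lifting it to a hyperplane in the ambient space of $V$ and using the purity of $V$ there. The lifting mechanism is already supplied by Lemma~\ref{lem:hyperplanescontraction}, so the argument is essentially bookkeeping once the two degenerate cases are isolated.

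First I would dispose of the case $v=\veczero$. By the convention recalled in Section~\ref{sec:delcontr}, the contraction of $\veczero$ is just its deletion, so $V/v=V\setminus\{\veczero\}$ has the same rank~$r$ as $V$. Since $\veczero$ lies on every linear hyperplane $h$, one has $|h^{\pm}\cap V|=|h^{\pm}\cap(V\setminus\{\veczero\})|$, and purity is transferred from $V$ to $V/v$ verbatim.

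Next, I would handle $v\neq\veczero$. Then $V/v$ has rank $r-1$. If $r-1=0$, then $V/v$ is vacuously pure by the first clause of Definition~\ref{def:pure}. Otherwise, let $\tilde h$ be any linear hyperplane of the ambient space of $V/v$. By Lemma~\ref{lem:hyperplanescontraction} applied to $W=\{v\}$, there is a unique linear hyperplane $h$ of $\RR^r$ containing $\lin(v)$ whose projection equals $\tilde h$, and this bijection preserves signs: $v_i\in h^{\pm}$ if and only if its image lies in $\tilde h^{\pm}$. Because $v\in h$, removing $v$ does not affect the counts on the positive or negative sides, so
\[
|\tilde h^{+}\cap (V/v)|=|h^{+}\cap V|\quad\text{and}\quad |\tilde h^{-}\cap (V/v)|=|h^{-}\cap V|.
\]
Applying purity of $V$ to $h$ gives $|h^{+}\cap V|\geq2$ or $|h^{-}\cap V|\geq2$, hence the same for $\tilde h$ in $V/v$.

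Since $\tilde h$ was arbitrary, $V/v$ satisfies the purity condition, completing the proof. There is no real obstacle here; the only thing to watch is that the definition of purity permits the rank-zero escape clause, which is exactly what handles $r=1$, and that $v\in h$ is what kills the otherwise irritating discrepancy between $V$ and $V\setminus\{v\}$ when counting points strictly on one side of $h$.
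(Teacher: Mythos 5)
The paper does not supply a proof of this lemma; it merely remarks that it ``follows directly from the definition.'' Your argument fills in exactly the routine verification the authors had in mind: lift each hyperplane $\tilde h$ of the contracted space to a sign-preserving hyperplane $h\supseteq\lin(v)$ via Lemma~\ref{lem:hyperplanescontraction}, note that $v\in h$ makes the point counts on both open sides agree between $V$ and $V/v$, and appeal to the rank-zero clause of Definition~\ref{def:pure} and to the ``contraction of $\veczero$ is deletion'' convention to cover the two degenerate cases. This is correct, complete, and the same route the paper implicitly intends.
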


A first consequence of Lemma~\ref{lem:defpure} is the characterization of point configurations of degree~$0$.

\smallskip

\begin{lemma}~\label{lem:puredeggeq1}
 If ${V}$ is a pure vector configuration with $\rank(V)\geq 1$, then $\degG({V})\geq 1$.
\end{lemma}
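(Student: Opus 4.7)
The plan is to exhibit a linear hyperplane $h$ with $|h^+\cap V|\geq r+1$, which by definition gives $\degG(V)\geq 1$. My strategy is a perturbation argument: start with a hyperplane $h_0$ already containing many vectors of $V$, then tilt it slightly so that those vectors all fall strictly into the positive open halfspace, while preserving whatever positive population $h_0^+$ already had by purity.

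First I would handle the trivial case $r=1$ directly: the only linear hyperplane is $h_0=\{\veczero\}$, and purity gives either $|h_0^+\cap V|\geq 2=r+1$ or $|h_0^-\cap V|\geq 2$, either of which (after reorienting if necessary) yields $\degG(V)\geq 1$ immediately. For $r\geq 2$, since $V$ spans $\RR^r$ I can select $r-1$ linearly independent vectors $v_1,\dots,v_{r-1}\in V$ and set $h_0:=\lin(v_1,\dots,v_{r-1})$, a linear hyperplane. By purity applied to $h_0$, one of the open halfspaces $h_0^\pm$ contains at least two elements of $V$; after reorienting, I may assume $|h_0^+\cap V|\geq 2$ and pick $w_1,w_2\in h_0^+\cap V$.

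Now I tilt the normal. Writing $n_0$ for the normal defining $h_0^+$, I look for a direction $n_1$ such that the perturbed normal $n_0+\varepsilon n_1$ places every $v_i$ strictly on the positive side, while retaining $w_1,w_2$ there. Since $\sprod{n_0}{v_i}=0$, the first set of conditions reduces to $\sprod{n_1}{v_i}>0$ for $i=1,\dots,r-1$; because the $v_i$ are linearly independent in $\RR^r$, the set of such $n_1$ is a nonempty open cone (any vector whose coordinates in a basis dual to $v_1,\dots,v_{r-1}$ are positive works). For any such $n_1$ and sufficiently small $\varepsilon>0$, the strict inequalities $\sprod{n_0+\varepsilon n_1}{w_j}>0$ persist by continuity. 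The resulting hyperplane $h$ then satisfies
\[
\{v_1,\dots,v_{r-1},w_1,w_2\}\subseteq h^+\cap V,
\]
and these $r+1$ elements are distinct as multiset entries (the $v_i$ lie on $h_0$, the $w_j$ lie in $h_0^+$). Hence $|h^+\cap V|\geq r+1$, which gives $\degG(V)\geq 1$.

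The only delicate point is the simultaneous solvability of the strict inequalities $\sprod{n_1}{v_i}>0$, and this is exactly where the linear independence of the chosen $v_1,\dots,v_{r-1}$ is essential; any careful write-up just needs to name the cone witnessing nonemptyness and verify that the perturbation does not destabilize $w_1,w_2$. Everything else is bookkeeping about open halfspaces.
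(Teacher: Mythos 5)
Your argument is correct and produces the required hyperplane $h$ with $|h^+\cap V|\geq r+1$ by an explicit tilt of the normal vector. The paper takes a different route: it picks a hyperplane $h$ spanned by a subconfiguration $W\subset V$, uses purity to get $|h^+\cap V|\ge 2$, observes via Lemma~\ref{lem:hyperplanescontraction} that the contraction $V/W$ is then a rank-$1$ configuration of dual degree $\ge 1$, and transfers this bound back to $V$ with Proposition~\ref{prop:subconfigurations}. Conceptually the two proofs coincide: the contraction machinery is exactly a packaged version of the perturbation $n_0+\varepsilon n_1$ that you carry out by hand (the paper even formalizes this tilt separately as the composition $h_1\circ h_2$, though it does not invoke it in this particular proof). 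What your version buys is self-containedness — it avoids the deletion/contraction formalism and Proposition~\ref{prop:subconfigurations} entirely — at the cost of spelling out the inner-product estimates and a separate base case at $r=1$. The paper's version is shorter given the surrounding lemmas and needs no case split on the rank.
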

\begin{proof}
Let ${h}$ be a linear hyperplane spanned by some subconfiguration $W \subset {V}$. By Definition~\ref{def:pure}, we can assume that $|{h}^+\cap {V}|\geq 2$. Then the contraction~${V}/W$ is a pure configuration of rank $1$ that satisfies $\degG({V}/W)\geq 1$ because it has a hyperplane  $\tilde h$ with $|{\tilde h}^+\cap {V/W}|\geq 2$ by Lemma~\ref{lem:hyperplanescontraction}. The result now follows from Proposition~\ref{prop:subconfigurations}.
\end{proof}

\begin{proposition}\label{prop:deg0}
The degree of a point configuration ${A}$ is~$0$ if and only if ${A}$ is the set of vertices of a simplex (possibly with repetitions).
\end{proposition}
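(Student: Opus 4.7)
The plan is to reduce the question to the dual side and let the ``purity'' framework just developed do the work, combining Proposition~\ref{prop:primaldual} with Lemmas~\ref{lem:removerepeated}, \ref{lem:defpure}, and \ref{lem:puredeggeq1}.

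The forward direction is essentially by inspection. If $A$ is the vertex set of a $d$-simplex (possibly with repetitions), then by Lemma~\ref{lem:removerepeated} I may delete all repeated points without changing $\degc$. After doing so, every proper subset of the vertices of a $d$-simplex lies on a facet, so the only interior face is the full vertex set, which has codimension $0$; hence $\degc(A)=0$.

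For the converse, suppose $\degc(A)=0$. Invoking Lemma~\ref{lem:removerepeated} once more, I may assume $A$ has no repeated points, so by Lemma~\ref{lem:defpure} the Gale dual $V:=\Gale{A}$ is pure. By Proposition~\ref{prop:primaldual}, $\degG(V)=\degc(A)=0$. If $\rank(V)\ge 1$, then Lemma~\ref{lem:puredeggeq1} forces $\degG(V)\ge 1$, a contradiction. Therefore $\rank(V)=0$, which, by the relation $|A|=r+d+1$, means $|A|=d+1$. Since $A$ is full-dimensional with exactly $d+1$ points, these points are affinely independent, so $A$ is the vertex set of a $d$-simplex.

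There is no real obstacle here: all the substantive work has already been packaged into Lemma~\ref{lem:puredeggeq1} (``pure of positive rank has positive dual degree''), which itself rests on Proposition~\ref{prop:subconfigurations} and the definition of purity. The only thing to check carefully is the boundary case $r=0$, where purity holds vacuously by Definition~\ref{def:pure} and the implication from Lemma~\ref{lem:puredeggeq1} does not apply, letting the argument terminate on the simplex.
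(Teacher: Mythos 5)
Your proof is correct and uses essentially the same ingredients as the paper (Lemma~\ref{lem:removerepeated}, Lemma~\ref{lem:defpure}, and Lemma~\ref{lem:puredeggeq1} via Gale duality). The only cosmetic difference is that the paper first reduces to a $d$-dimensional subconfiguration of exactly $d+2$ points via Corollary~\ref{cor:degpointdeletioncontraction} before invoking purity, whereas you apply Lemma~\ref{lem:puredeggeq1} directly to the Gale dual of the full (repeat-free) configuration and conclude $\rank(V)=0$; both routes collapse to the same argument.
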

\begin{proof}
Because of Corollary~\ref{cor:degpointdeletioncontraction} and Lemma~\ref{lem:removerepeated}, it is enough to see that there are no $d$-dimensional point configurations of degree~$0$ with $d+2$ points, none of which are repeated; this follows from Lemma~\ref{lem:puredeggeq1}.
\end{proof}

By taking the Gale dual of the vertex set of a simplex (possibly with repetitions) we get the following result.

\begin{corollary}\label{cor:deg0}
 Any vector configuration $V$ of rank $r$ with $r+d+1$ elements and $\degG(V)=0$ has a weak \CayleyG decomposition of length $d+1$.
\end{corollary}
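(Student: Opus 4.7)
The plan is simply to translate Proposition~\ref{prop:deg0} through Gale duality, after a reduction to the totally cyclic case.

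First I would observe that $\codegG(V) = d+1-\degG(V) = d+1 \ge 1$ (the statement is vacuous when $d<0$). Hence Lemma~\ref{lem:totallycyclicsubconf} applies and yields a totally cyclic subconfiguration $W\subseteq V$ with $\codegG(W)=\codegG(V)=d+1$. Since $W$ is a subconfiguration, Proposition~\ref{prop:subconfigurations} gives $\degG(W)\le \degG(V)=0$, so $\degG(W)=0$. Writing $|W|=r_W+d_W+1$, the equality $\codegG(W)=d_W+1-\degG(W)=d+1$ forces $d_W=d$. Any weak \CayleyG decomposition of $W$ is automatically a weak \CayleyG decomposition of $V$ (with the remaining vectors thrown into nothing, as the factors need only be pairwise disjoint subsets of $V$). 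So it suffices to prove the corollary for the totally cyclic configuration $W$.

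Next, by Lemma~\ref{lem:totallycyclicGaledual}, $W$ is (up to rescaling each vector by a positive scalar) the Gale dual of some $d$-dimensional point configuration $A$ with $|A|=|W|=r_W+d+1$. Rescaling does not affect linear dependencies and hence preserves the property of being a positive vector of the oriented matroid, so it does not affect weak \CayleyG decompositions. By Proposition~\ref{prop:primaldual}, $\degc(A)=\degG(W)=0$, and Proposition~\ref{prop:deg0} then tells us that $A$ is, up to repeated points, the vertex set of a $d$-simplex.

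Finally, I would exhibit the weak Cayley decomposition of $A$ explicitly: if $v_1,\ldots,v_{d+1}$ are the vertices of the $d$-simplex $\conv(A)$, partition $A=A_1 \uplus \cdots \uplus A_{d+1}$ by collecting all copies of $v_i$ in $A_i$ (with $A_0=\emptyset$). Each $A_i$ is non-empty and $A\setminus A_i$ lies in the facet of $\conv(A)$ opposite to $v_i$, so this is a weak Cayley decomposition of length $d+1$. Transporting back through Proposition~\ref{prop:primaldual} yields the desired weak \CayleyG decomposition of $W$, hence of $V$, of length $d+1$. The only real step that needed checking was that one can indeed reduce to the totally cyclic situation; everything else is a direct application of results already established.
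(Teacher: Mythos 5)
Your proof is correct and follows essentially the same route as the paper: reduce to a totally cyclic subconfiguration $W$ with the same codegree, deduce $\degG(W)=0$ and $d_W=d$, pass to the Gale dual point configuration, invoke Proposition~\ref{prop:deg0} to identify it as the vertex set of a simplex, and read off the $d+1$ disjoint positive circuits. The only cosmetic differences are that you derive $\degG(W)=0$ via Proposition~\ref{prop:subconfigurations} where the paper uses a direct counting argument, and that you exhibit the Cayley decomposition on the primal side and transport it via Proposition~\ref{prop:primaldual}, whereas the paper phrases the same fact as ``$W$ is a direct sum of positive circuits.''
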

\begin{proof}
Observe that $V$ has $\degG(V)=0$ if and only if $\codegG(V)=d+1$. By Lemma~\ref{lem:totallycyclicsubconf}, $V$ has a totally cyclic subconfiguration $W$ with $\codegG(W)=d+1$. This subconfiguration has rank $r'$ and  $r'+d'+1$ elements. Since there are at least $r-r'$ elements of $V$ in $V\setminus W$, then $d'+1\leq d+1$. However, by definition $\codegG(W)\leq d'+1$ and hence $d=d'$. This implies that $\degG(W)=0$.

Since $W$ is totally cyclic, its Gale dual is a point configuration $A$ of degree $0$ (by Lemma~\ref{lem:totallycyclicGaledual} and Proposition~\ref{prop:primaldual}). Hence, by Proposition~\ref{prop:deg0}, $A$ is the vertex set of a simplex. Taking Gale duals, this implies that $W$ is a direct sum of positive circuits, i.e., $\RR^{ r'}$ is a direct sum of some subspaces $U_1, \ldots, U_l$ such that ${W}$ is the union of positive circuits $C_1, \ldots, C_l$ with $C_1 \subset U_1, \ldots, C_l \subset U_l$. These circuits form the factors of a weak \CayleyG decomposition of $W$ of length $d'+1=d+1$ which is also a weak \CayleyG decomposition of~$V$.
\end{proof}

\subsection{The proof of Theorem~\ref{thm:d+1-3dd_dual}}

We will use Proposition~\ref{prop:subspacequotient} to prove Theorem~\ref{thm:d+1-3dd_dual}. Recall that in this dual setting our goal is to find many disjoint positive circuits. In the proof we will iteratively find a subconfiguration ${W}$ of ${V}$ of lower rank that has smaller dual degree. 
Eventually we will find a configuration of degree $0$, and Corollary~\ref{cor:deg0} will certify that in this subconfiguration there are already many disjoint positive circuits.

\begin{ctheorem}{\ref{thm:d+1-3dd_dual}}
 Let $V$ be a vector configuration with $r+d+1$ elements and dual degree $\dd:=\degG(V)$. Then $V$ has a weak \CayleyG decomposition of length at least $d-3\dd+1$.
\end{ctheorem}

\begin{proof}

By Lemmas~\ref{lem:totallycyclicsubconf} and~\ref{lem:purecontraction}, we can assume that ${V}$ is totally cyclic and pure. 
The proof will be by induction on $\dd$. The base case is $\dd=0$, which we know to hold because of Corollary~\ref{cor:deg0}.

Let ${h}$ be any hyperplane spanned by elements of ${V}$. Let ${W}={V}\cap {h}$. Then ${V}/{W}$ is 
pure by Lemma~\ref{lem:quotientsofpurearepure} and has rank~$r_{/{W}}=1$,  $d_{/{W}}+2$ elements and degree $\dd_{/{W}}:=\degG({V}/{W})$. By Lemma~\ref{lem:puredeggeq1}, 
\begin{equation}\label{eq:ddgeq1}
 \dd_{/{W}}\geq 1.
\end{equation}
From Proposition~\ref{prop:easybound} we can deduce that $(d_{/{W}} -2\dd_{/{W}})\leq r_{/{W}}-1=0$. Therefore the previous equation~\eqref{eq:ddgeq1} implies that 
\begin{equation}
\label{eq:dmodhleq-1}
 (d_{/{W}} -3\dd_{/{W}})= (d_{/{W}} -2\dd_{/{W}})-\dd_{/{W}}\leq -\dd_{/{W}}\leq -1
\end{equation}

On the other hand, ${W}$ is a vector configuration of rank $r-1$ with $r+d_{{W}}$ elements and degree $\dd_{W}:=\degG({W})$. By Proposition~\ref{prop:subspacequotient}, 
\begin{equation}\label{eq:dwleqd-1}
\dd_{W}\stackrel{\eqref{eq:decompositioninequality}}{\leq} \dd-\dd_{/{W}}\stackrel{\eqref{eq:ddgeq1}}{\leq} \dd -1.                                                                                                                    
\end{equation}
Moreover, again by Proposition~\ref{prop:subspacequotient} and \eqref{eq:dmodhleq-1},
\[
 d_{W}-3\dd_{W}\stackrel{\eqref{eq:decompositioninequality}}{\geq} (d-3\dd)-(d_{/{W}} -3\dd_{/{W}})-1\stackrel{\eqref{eq:dmodhleq-1}}{\geq} d-3\dd.
\]

Since $\dd_{W}\leq \dd-1$ by~\eqref{eq:dwleqd-1}, we can apply induction on ${W}$, which certifies that~$W$ contains at least $d_{W}-3\dd_{W}+1\geq d-3\dd+1$ disjoint positive circuits, and hence so does~${V}$.
\end{proof}

Of course, this theorem is just a first step, since it only proves that there is a subspace that contains many disjoint circuits, but ignores the vectors outside of this subspace, which could form more disjoint circuits. Yet it is already close to the bound of Conjecture~\ref{conj:d+1-2dd_dual}, which would be optimal.
This should be compared with the situation for the original Ehrhart-theoretical counterpart of the conjecture. The currently best result (see statement \eqref{it:latticeCayley} in Section~\ref{sec:Ehrhart}) is not even linear in the lattice degree.

\section{Configurations of degree~$1$}\label{sec:deg1}

The goal of this section is to prove Theorem~\ref{thm:deg1_dual}.

\subsection{Lawrence polytopes}\label{sec:Lawrence}

Lawrence polytopes form a very interesting family of polytopes (cf.~\cite{BayerSturmfels1990}, \cite[Chapter~9]{OrientedMatroids1993}, \cite{Santos2002} or \cite[Lecture~6]{ZieglerLecturesOnPolytopes}). A \defn{Lawrence polytope} is a polytope $P$ such that the Gale dual $V$ of its vertex set is centrally symmetric (after rescaling with positive scalars). That is, maybe after rescaling, $-V=V$ (as a multiset). In Example~\ref{ex:lawrence} we computed their degree.

The following proposition shows that irreducible Lawrence polytopes can be also characterized as having extreme degree.  
Recall that Proposition~\ref{prop:easybound} stated that every irreducible vector configuration of rank $r$, $r+d+1$ elements and degree $\dd$ fulfills $r\geq d+1-2\dd$; Lawrence polytopes are precisely those that attain the equality.    

\begin{proposition}\label{prop:Lawrence}
 An irreducible vector configuration $V$ of rank $r$, $r+d+1$ elements and degree $\dd$ satisfies $r=d+1-2\dd$ if and only if $V$ is centrally symmetric (up to rescaling).
\end{proposition}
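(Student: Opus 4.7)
The proof breaks into the two implications. For $(\Leftarrow)$, if $V$ is centrally symmetric up to positive rescaling, then rescaling preserves the oriented matroid (hence the dual degree), and Example~\ref{ex:lawrence} gives $\dd = n/2 - r = (d+1-r)/2$, so $r = d+1-2\dd$.

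For $(\Rightarrow)$, the hypothesis $r = d+1-2\dd$ saturates the bound of Proposition~\ref{prop:easybound}, since $n = r+d+1 = 2r+2\dd$. As $V$ is irreducible, any generic hyperplane $h$ (one with $h\cap V=\emptyset$) satisfies $|h^+\cap V| + |h^-\cap V| = n$ with each term at most $r+\dd$, forcing $|h^+\cap V|=|h^-\cap V|=r+\dd$. The plan is to show that on every line $L=\lin(v)$ with $v\in V$, the two open rays of $L$ contain equally many vectors of $V$. Granting this, rescaling each $w\in V\cap L$ by the positive scalar sending $w\mapsto v$ (if $w$ is a positive multiple of $v$) or $w\mapsto -v$ (if negative) produces a multiset invariant under negation, proving central symmetry up to rescaling.

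To establish the ray-count equality, fix $v\in V$, set $L=\lin(v)$, and assume first $r\ge 2$. By a dimension count inside $L^\perp$, we can choose a hyperplane $h_0$ containing $L$ with $h_0\cap V\subseteq L$: for each $v_j\in V\setminus L$, $L^\perp\cap v_j^\perp$ is a proper subspace of $L^\perp$, and a generic normal $n_0\in L^\perp$ avoids all of them. Pick $u\in\RR^r$ with $\sprod{u}{v}\ne 0$ and take the one-parameter family $h_t$ with normals $n_t=n_0\cos t+u\sin t$. For small $t\ne 0$ the hyperplane $h_t$ is generic, and $\sprod{n_t}{v}=\sprod{u}{v}\sin t$ has opposite signs for $t>0$ and $t<0$. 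Since every $w\in V\cap L$ is a nonzero scalar multiple of $v$, the vectors on the positive ray of $L$ lie in $h_t^+$ for $t>0$ and in $h_t^-$ for $t<0$, the vectors on the negative ray do the opposite, and no vector in $V\setminus L$ changes sides between $h_{-\epsilon}$ and $h_{+\epsilon}$. Writing $p$ and $q$ for the numbers of vectors of $V$ on the two open rays of $L$, we get
\[r+\dd = |h_{+\epsilon}^+\cap V| = |h_0^+\cap V| + p,\qquad r+\dd = |h_{-\epsilon}^+\cap V| = |h_0^+\cap V| + q,\]
so $p=q$. The case $r=1$ is immediate from the same counting identity applied to the single hyperplane $\{0\}$. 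The step I expect to require the most care is verifying that $h_0$ and $u$ can be chosen so that only the vectors on $L$ change sides at $t=0$; this is a standard dimension count in $L^\perp$ but should be spelled out to rule out degeneracies.
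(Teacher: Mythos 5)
Your proof is correct, and the forward direction takes a genuinely different route from the paper's. The paper first proves Proposition~\ref{prop:subspacequotient}, which bounds $\degG(V)$ from below by $\degG(W) + \degG(V/W)$ for any flat $W = \lin(W) \cap V$, and applies it with $W = \lin(v)\cap V$: combined with the ``easy bound'' of Proposition~\ref{prop:easybound} applied to both $W$ and $V/W$, the saturation $r = d+1-2\dd$ forces $d_W = 2\dd_W$, and a rank-one configuration with $d_W = 2\dd_W$ is visibly balanced. Your argument instead works directly at the level of hyperplanes: you observe that saturation forces every generic hyperplane to split $V$ as $(r+\dd, r+\dd)$, pick a hyperplane $h_0$ through $L=\lin(v)$ meeting $V$ only in $L$ (possible since $L^\perp\not\subseteq v_j^\perp$ for $v_j\notin L$, and $\dim L^\perp\ge1$ when $r\ge2$), and rotate it through $L$ to convert the two equalities $|h_{\pm\epsilon}^+\cap V|=r+\dd$ into the ray-count equality $p=q$, handling $r=1$ separately. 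The two proofs establish the same intermediate claim (each line through a vector of $V$ carries a balanced subconfiguration) but package the perturbation step differently: the paper's version is shorter because it amortizes the hyperplane-composition argument into Proposition~\ref{prop:subspacequotient}, which is reused elsewhere, while yours is self-contained and avoids invoking the subspace--quotient machinery at the cost of redoing the rotation argument inline. One small remark on phrasing: the dimension count is needed only to choose $h_0$; the auxiliary vector $u$ can be completely arbitrary subject to $\sprod{u}{v}\ne 0$, and the fact that only vectors on $L$ change sides follows automatically once $h_0\cap V\subseteq L$, so the caveat at the end is slightly overcautious.
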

\begin{proof}
Example~\ref{ex:lawrence} shows the ``if'' part. 
To prove the converse, we will show that $W:=\lin (v)\cap V$ is centrally symmetric for each $v\in V$.
Let $d_{W}+2$ be the number of elements of $W$ and $\dd_{W}$ its degree. And let $\dd_{/W}$ be the degree of $V/W$, and $(r-1)+d_{/W}+1$ its number of elements. 

By Proposition~\ref{prop:easybound}, $(d_{/ W}-2\dd_{/W})\leq r-2$, and applying Proposition~\ref{prop:subspacequotient} we get that
 $d_{ W}-2\dd_{ W}\geq (d-2\dd)-(d_{/ W}-2\dd_{/W})-1\geq (d-2\dd)-(r-1)=0$.
Moreover, again by Proposition~\ref{prop:easybound}, $d_{W}-2\dd_{ W}\leq 0$. Therefore $d_{ W}=2\dd_{ W}$, and it is easy to check that any configuration of rank~$1$ fulfilling $d_{ W}=2\dd_{ W}$ must be centrally symmetric (again, up to rescaling).
\end{proof}

\subsection{Circuits in configurations of degree one}
 
In order to prove Theorem~\ref{thm:deg1_dual}, we need the following crucial result about circuits in vector configurations of dual degree~$1$. 
It states that in a pure vector configuration of dual degree~$1$ all small circuits are positive (or negative).

\begin{proposition}\label{prop:nosmallcircuits}
 Let ${V}$ be a pure vector configuration of rank~$r$ with $\degG({V})=1$. If $C$ is a circuit of $\cM({V})$ with $|C^+|>0$ and $|C^-|>0$, then $|C|=r+1$. 
\end{proposition}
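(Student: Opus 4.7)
The plan is to argue by contradiction: assume $s := |C| \le r$ and construct a hyperplane of $\R^r$ with at least $r+2$ vectors of $V$ on its positive side, contradicting $\degG(V)=1$. Set $U := \lin(C)$ and $W := V \cap U$; since $C$ is a circuit of size $s$, $\dim U = s-1$ and $W$ spans $U$. The key geometric observation is that the mixed circuit $C$ lies entirely in an open halfspace of $U$: every linear dependence on $C$ is a scalar multiple of the unique mixed-sign dependence, so no nonnegative combination of the vectors of $C$ equals $\veczero$. Equivalently $\veczero \notin \conv(C)$, and strict separation then produces a linear hyperplane $H$ of $U$ with $C \subset H^+$, yielding $|H^+ \cap W| \ge s$.

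On the quotient side, iterating Lemma~\ref{lem:quotientsofpurearepure} (discarding zero images along the way, which does not affect purity) shows that $V/W$ is pure, and the assumption $s \le r$ gives $\rank(V/W) = r-s+1 \ge 1$. Lemma~\ref{lem:puredeggeq1} then supplies a hyperplane $\bar h$ of $V/W$ with at least $r-s+2$ vectors on its positive side. I lift $\bar h$ via Lemma~\ref{lem:hyperplanescontraction} to a hyperplane $h'$ of $\R^r$ containing $U$, and choose $\tilde H$ to be any hyperplane of $\R^r$ with $\tilde H \cap U = H$, oriented so that $\tilde H^+ \cap U = H^+$. By the definition of the composition of hyperplanes, the positive side of $h' \circ \tilde H$ contains every vector of $V\setminus W$ lying in $h'^+$ together with every vector of $W$ lying in $H^+$, hence at least $(r-s+2)+s = r+2$ vectors of $V$. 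This contradicts $\degG(V)=1$, and together with the trivial bound $|C| \le r+1$ forces $|C| = r+1$.

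The main obstacle is the opening geometric step: recognizing that, although a mixed circuit is linearly dependent, its sign pattern forbids a convex combination summing to the origin, so the entire circuit can be pushed into a single open halfspace of its span. Once this is available, the composition-of-hyperplanes construction mechanically converts the $s$ vectors on one side of $H$ and the $r-s+2$ vectors supplied by purity of the quotient into the required $r+2$ vectors on one side of a single hyperplane of $\R^r$.
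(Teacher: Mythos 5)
Your proof is correct and follows essentially the same route as the paper: restrict to $W = V\cap\lin(C)$, show $\degG(W)\geq 1$ because the mixed circuit can be pushed into an open halfspace of its span, show $\degG(V/W)\geq 1$ by purity, and combine. The only cosmetic difference is that the paper simply invokes Proposition~\ref{prop:subspacequotient} to get $\degG(V)\geq\degG(W)+\degG(V/W)\geq 2$, whereas you inline the proof of that proposition by explicitly composing the lifted hyperplanes; both yield the same contradiction.
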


\begin{proof}
Consider ${W}={V}\cap \lin (C)$. By construction, $\rank(W)=|C|-1$. If $|C^+|>0$ and $|C^-|>0$, there is a hyperplane ${h}$ in $\lin(C)$ with $C\subset {h}^+$. Indeed, by the Farkas Lemma (see \cite[Section 1.4]{ZieglerLecturesOnPolytopes}), if there is no such  hyperplane, then $C$ must be a positive circuit. Therefore $\degG({W})\geq 1$ because $|{h}^+\cap W|\geq |{h}^+\cap C|=\rank(W)+1$. Since ${V}$ is pure, ${V}/{W}$ is also pure by Lemma~\ref{lem:quotientsofpurearepure}. If moreover $|C|\leq r$, then $\rank({V}/{W})\geq 1$, and by Lemma~\ref{lem:puredeggeq1}, $\degG({V}/{W})\geq 1$. Now, Proposition~\ref{prop:subspacequotient} implies that $\degG({V})\geq \degG({W})+\degG({V}/{W})\ge2$, which contradicts the hypothesis that $\degG({V})=1$.
\end{proof}

We deduce some useful corollaries:

\begin{corollary}\label{cor:norepeatedpuredeg1}
If $V$ is a pure vector configuration of rank~$r\ge2$ and $\degG({V})=1$, then it has no repeated vectors except for, perhaps, the zero vector. 
\end{corollary}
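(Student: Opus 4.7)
The proof should be a quick application of Proposition~\ref{prop:nosmallcircuits}. My plan is to argue by contradiction: suppose there exist indices $i \neq j$ such that $v_i = v_j$ and $v_i \neq \veczero$. Then $e_i - e_j \in \Dep(V)$, so there is a vector $C \in \cV(V)$ with $C^+ = \{i\}$ and $C^- = \{j\}$. Since $|C| = 2$, this vector is automatically inclusion-minimal in $\cV(V)$, hence a circuit of $\cM(V)$.

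Now both $C^+$ and $C^-$ are non-empty, so Proposition~\ref{prop:nosmallcircuits} applies and forces $|C| = r + 1$. But $|C| = 2$ and we assumed $r \geq 2$, which gives $r + 1 \geq 3 > 2$, a contradiction. Therefore no two distinct elements of $V$ can coincide unless they are both $\veczero$.

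I do not expect any obstacle here: the hypothesis $r \geq 2$ is precisely what forbids the two-element positive/negative circuit allowed by Proposition~\ref{prop:nosmallcircuits}, and purity is built into the hypothesis so that the proposition applies verbatim. The only thing to double-check is that the pair $\{i,j\}$ genuinely supports a circuit (and not merely a vector), but with $|C| = 2$ this is automatic since any proper non-empty subset of a two-element set has cardinality $1$ and thus cannot support a non-trivial dependence in an irreducible pair of non-zero vectors.
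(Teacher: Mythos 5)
Your proof is correct and follows essentially the same route as the paper, which dispatches the claim in a single sentence by observing that a repeated non-zero vector would give a circuit of size $2$ with both signs present, contradicting the size bound $|C| = r+1 \geq 3$ from Proposition~\ref{prop:nosmallcircuits}. Your extra check that the two-element dependent set is genuinely a circuit (since a singleton can only be dependent if the vector is zero) is correct and is implicit in the paper's version.
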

\begin{proof}
  By Proposition~\ref{prop:nosmallcircuits}, any circuit with non-empty positive and negative part has size $r+1\geq 3$.
\end{proof}

\begin{corollary}\label{cor:disjointcircuits}
 Let ${V}$ be a pure $r$-dimensional vector configuration with $\degG({V})=1$. If $C\ne D$ are circuits of $\cM({V})$ with $|C\cup D|\leq r+1$, then $C \cap D=\emptyset$.
\end{corollary}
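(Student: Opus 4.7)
The plan is to argue by contradiction. Suppose $C\ne D$ are circuits of $\cM(V)$ with $|C\cup D|\le r+1$ but $C\cap D\ne\emptyset$, and fix some $v\in C\cap D$. By Proposition~\ref{prop:nosmallcircuits}, each of $C$ and $D$ is either one-sided (one of $C^+,C^-$ is empty, and similarly for $D$) or has size exactly $r+1$; I would split into cases along these lines.

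First I would dispose of the easy case in which at least one of the two, say $D$, satisfies $D^+,D^-\ne\emptyset$. Then Proposition~\ref{prop:nosmallcircuits} gives $|D|=r+1$, and combined with $|C\cup D|\le r+1$ this forces $C\subseteq D$. Since circuits are inclusion-minimal dependent sets and $C$ is itself dependent, the only possibility is $C=D$, contradicting the hypothesis $C\ne D$.

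It remains to handle the case when both $C$ and $D$ are one-sided; by replacing each with its negative if necessary, I may assume that both are positive circuits. The main step is to invoke the circuit elimination axiom of oriented matroids, applied to the pair $(-C,D)$ at the element $v\in(-C)^-\cap D^+$. This produces a circuit $E$ of $\cM(V)$ with $v\notin E$, $E^+\subseteq D\setminus\{v\}$, and $E^-\subseteq C\setminus\{v\}$. In particular $E\subseteq (C\cup D)\setminus\{v\}$, so $|E|\le|C\cup D|-1\le r$.

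To finish, I would argue that both $E^+$ and $E^-$ are non-empty, so that Proposition~\ref{prop:nosmallcircuits} forces $|E|=r+1$, contradicting the bound $|E|\le r$ just obtained. Indeed, if $E^+=\emptyset$ then $E\subseteq C\setminus\{v\}\subsetneq C$ would be a circuit properly contained in $C$, violating minimality; symmetrically $E^-\ne\emptyset$. The only subtle point in the whole argument is setting up the circuit elimination with the correct signs (negating $C$ so that $v$ appears with opposite signs in the two input circuits); once that is done, the rest is essentially routine.
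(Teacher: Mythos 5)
Your proof is correct and follows essentially the same route as the paper: reduce via Proposition~\ref{prop:nosmallcircuits} to the case where $C$ and $D$ are both positive, then eliminate the common element to obtain a circuit $E$ of size at most $r$ with $E^+,E^-\neq\emptyset$, contradicting Proposition~\ref{prop:nosmallcircuits}. The only cosmetic differences are that the paper gets $|C|,|D|\le r$ directly from the existence of $c\in C\setminus D$ and $d\in D\setminus C$ (rather than your case split) and invokes strong elimination to place $c\in E^+$, $d\in E^-$, whereas you use weak elimination together with circuit minimality to see that neither sign part of $E$ is empty.
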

\begin{proof}
Since $C$~and~$D$ are minimal by definition, there must exist $c\in C\setminus D$ and $d\in D\setminus C$. Therefore, $|C|\leq r$ and $|D|\leq r$ and, by Proposition~\ref{prop:nosmallcircuits}, both $C$ and $D$ may be assumed to be positive circuits. 
Suppose there also exists some $p\in C \cap D$. 
Eliminating $p$ on $C$ and $-D$ by oriented matroid circuit elimination (see \cite{OrientedMatroids1993}), we find a circuit $E$ with $c\in E^+$, $d\in E^-$ of size $|E|\leq |C \cup D|-1 \leq r$. This contradicts Proposition~\ref{prop:nosmallcircuits}.
\end{proof}

Another useful  consequence is that the factors of a weak \CayleyG decomposition of a configuration of dual degree~$1$ are its only small circuits.
\begin{lemma}\label{lem:onlysmallcircuits}
Let ${V}$ be a pure vector configuration of rank~$r$ with $r+d+1$ elements, $d\ge2$ and $\degG({V})=1$. If ${V}$ has a weak \CayleyG decomposition of length $d$ with factors $C_1,\dots,C_d$, and $D$ is a circuit of $\cM({V})$ with $|D|\leq r$, then $D=C_i$ for some $1\leq i\leq d$.
\end{lemma}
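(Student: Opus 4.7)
My approach is by contradiction: suppose $D$ is a circuit of $\cM(V)$ with $|D| \leq r$ and $D \neq C_i$ for every $i$. The aim is to produce two contradictory bounds on $\degG(V/D)$.

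First I would reduce immediately: Proposition~\ref{prop:nosmallcircuits} combined with $|D| \leq r$ forces $D$ to be a positive circuit, otherwise $D$ would be mixed and of size $r+1$. If $D$ happens to be disjoint from every $C_i$, then $\{C_1, \ldots, C_d, D\}$ is a weak \CayleyG decomposition of length $d+1$, and Proposition~\ref{prop:degree-cayley} yields $\degG(V) \leq (r+d+1) - r - (d+1) = 0$, contradicting $\degG(V) = 1$.

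The main case is when $D$ intersects some $C_j$. Here I would contract $V$ by the entire circuit $D$ and collide two estimates. On one hand, $V$ is pure, so $V/D$ is pure by iterating Lemma~\ref{lem:quotientsofpurearepure}, and $\rank(V/D) = r - |D| + 1 \geq 1$ because $|D| \leq r$; hence Lemma~\ref{lem:puredeggeq1} forces $\degG(V/D) \geq 1$. On the other hand, each $C_i$ descends to a non-empty positive vector in $\cM(V/D)$: for $i$ with $D \cap C_i = \emptyset$ one keeps $C_i$ unchanged, and for $j$ with $D \cap C_j \neq \emptyset$ the projected set $C_j \setminus D$ is non-empty (since $C_j \not\subseteq D$, as a circuit cannot strictly contain another circuit and $C_j \neq D$) and is the support of the projection of the positive dependence of $C_j$. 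Pairwise disjointness descends from $V$ to the projections, so $V/D$ admits a weak \CayleyG decomposition of length $d$, and Proposition~\ref{prop:degree-cayley} gives
\[
\degG(V/D) \leq (r+d+1-|D|) - (r-|D|+1) - d = 0,
\]
contradicting the purity lower bound.

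The crucial step, where most of the thought goes, is to identify that contracting by the \emph{entire} circuit $D$ is the right reduction: the rank drop of $|D|-1$ is precisely calibrated so that the arithmetic of Proposition~\ref{prop:degree-cayley} collapses to $0$, while the rank of $V/D$ remains strictly positive so that Lemma~\ref{lem:puredeggeq1} bites. Once this reduction is chosen, the remaining work---verifying that the sets $C_j \setminus D$ stay non-empty, disjoint, and positive in $\cM(V/D)$, and that $V/D$ inherits purity---is routine bookkeeping via the lemmas already established in Sections~\ref{sec:vectorconfigurations} and~\ref{sec:degk}.
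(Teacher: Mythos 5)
Your proof is correct, and it takes a genuinely different route from the paper's. The paper argues by a case analysis on the intersection sizes $|C_i \cap D|$: when some $|C_i \cap D| = |C_i|-1$ or when $D$ meets some $C_j$ with all intersections small, it derives $|C_i \cup D| \le r+1$ and invokes Corollary~\ref{cor:disjointcircuits} (which rests on circuit elimination); the remaining case is $D$ disjoint from all $C_i$, which you handle identically. Your main case instead contracts by the full circuit $D$ and plays two degree bounds against each other: purity of $V/D$ (via iterating Lemma~\ref{lem:quotientsofpurearepure}) together with $\rank(V/D)=r-|D|+1\ge 1$ gives $\degG(V/D)\ge 1$ by Lemma~\ref{lem:puredeggeq1}, while the descended disjoint positive vectors $C_i\setminus D$ (nonempty since $C_i$ and $D$ are distinct circuits, hence neither contains the other) give a weak \CayleyG decomposition of $V/D$ of length $d$, so Proposition~\ref{prop:degree-cayley} forces $\degG(V/D)\le 0$. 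This is cleaner: it bypasses Corollary~\ref{cor:disjointcircuits} entirely, it absorbs the ``disjoint'' case as a degenerate instance of the same contraction argument, and the arithmetic never actually uses the hypothesis $d\ge 2$, so your argument in fact proves slightly more. The paper's counting argument, on the other hand, is more elementary in that it stays entirely inside $V$ and avoids the need to track purity and rank through an iterated contraction.
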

\begin{proof}
Assume that $D\neq C_i$ for all $1\leq i\leq d$. If there is some $C_i$ with $|C_i\cap D|= |C_i|-1$, then $|C_i\cup D|\leq r+1$ and we get a contradiction to Corollary~\ref{cor:disjointcircuits}.
Otherwise, if $|C_i\cap D|\leq |C_i|-2$ for all $i$ and $|C_j\cap D|\neq \emptyset$ for some $j$, then 
\begin{align*}
  |C_j\cup D|&\leq n-\left|\bigcup_{i\neq j} \left(C_i\setminus D\right)\right|= n-\sum_{i\neq j}  \left| C_i\right|-\left| C_i\setminus D\right|\leq
 n-(d-1)2\\&=r+d+1-2d+2=r-d+3\leq r+1,
\end{align*}
and we again get a contradiction to Corollary~\ref{cor:disjointcircuits}. Hence, $D$ does not intersect any $C_1, \ldots, C_d$. 
By Proposition~\ref{prop:nosmallcircuits}, $D$ can be assumed to be a positive circuit. Therefore, ${V}$ has a weak \CayleyG decomposition of length $d+1$, so Proposition~\ref{prop:degree-cayley} implies that ${V}$ has dual degree $0$, a contradiction.
\end{proof}

In particular, in the situation of the previous lemma any subset ${W}\subset {V}$ with $|{W}|\leq r$ that does not contain any $C_i$ must be linearly independent.\\

Finally, we state another easy consequence of the Farkas Lemma (see \cite[Section 1.4]{ZieglerLecturesOnPolytopes}).
\begin{lemma}\label{lem:poscircuithyperplane}
 Let $C$ be a positive circuit of a vector configuration $V$, and let $h$ be a hyperplane. If $C\not\subset h$, then $|h^+\cap C|\geq 1$ and $|h^-\cap C|\geq 1$.
\end{lemma}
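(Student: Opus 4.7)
The plan is to prove this directly from the definition of a positive circuit together with the sign analysis of a single linear functional, without invoking the full Farkas Lemma machinery. (The lemma is attributed to Farkas in the paper because it is essentially a basic LP-duality observation.)

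First, I would unpack the hypothesis that $C$ is a positive circuit: there exist strictly positive coefficients $\lambda_i > 0$, one for each $v_i \in C$, such that
\[\sum_{v_i \in C} \lambda_i \, v_i = \veczero.\]
Next, let $w$ be a normal vector to the hyperplane $h$, so that $v_i \in h$ iff $\sprod{w}{v_i} = 0$, $v_i \in h^+$ iff $\sprod{w}{v_i} > 0$, and $v_i \in h^-$ iff $\sprod{w}{v_i} < 0$. Pairing the dependence with $w$ yields
\[0 = \sprod{w}{\sum_{v_i \in C} \lambda_i v_i} = \sum_{v_i \in C} \lambda_i \sprod{w}{v_i}.\]

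Now I would use the hypothesis $C \not\subset h$: this says that at least one summand on the right has $\sprod{w}{v_i} \neq 0$. Without loss of generality (by flipping $w$ if necessary) assume $\sprod{w}{v_{i_0}} > 0$ for some $v_{i_0} \in C$. Then $|h^+ \cap C| \geq 1$. Because every $\lambda_i$ is strictly positive and the weighted sum of the $\sprod{w}{v_i}$ equals zero, the contribution of the positive term $\lambda_{i_0}\sprod{w}{v_{i_0}} > 0$ must be cancelled by at least one strictly negative term $\lambda_{j_0}\sprod{w}{v_{j_0}} < 0$; this forces $\sprod{w}{v_{j_0}} < 0$, i.e., $v_{j_0} \in h^- \cap C$, so $|h^- \cap C| \geq 1$ as well.

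I do not anticipate any real obstacle here: the proof is a one-line sign argument once the positive dependence is paired with the normal vector of $h$. The only subtlety worth flagging is the correct use of the hypothesis $C \not\subset h$, which is exactly what rules out the trivial case where every $\sprod{w}{v_i}$ vanishes and the conclusion would be vacuous.
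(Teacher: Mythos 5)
Your proof is correct and is essentially the one-line sign argument that the paper has in mind when it labels the lemma "another easy consequence of the Farkas Lemma" (the paper itself omits the proof). Unpacking a positive circuit as a strictly positive dependence $\sum_{v_i\in C}\lambda_i v_i=\veczero$ and pairing with the normal $w$ of $h$ is exactly right, and the hypothesis $C\not\subset h$ is used correctly to guarantee a nonzero term, after which the zero-sum forces both a positive and a negative term.
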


\subsection{The proof of Theorem~\ref{thm:deg1_dual}}

\begin{ctheorem}{\ref{thm:deg1_dual}}
 Let ${V}$ be a vector configuration in $\RR^r$ with $n=r+d+1$ elements and $d\ge3$. If $\degG({V})=1$, then ${V}$ has a weak \CayleyG decomposition of length $d$.
\end{ctheorem}

\begin{proof}
By Lemmas~\ref{lem:totallycyclicsubconf}, \ref{lem:remove0} and~\ref{lem:purecontraction}, we can assume that $V$ is totally cyclic, irreducible, and pure.

We fix $d\ge3$ and use induction on $r$. By Proposition~\ref{prop:easybound}, $r\geq d-1$, and our base case is $r=d-1$. Proposition~\ref{prop:Lawrence} tells us that $r=d-1$ if and only if ${V}$ is centrally symmetric (up to rescaling). Observe that each of the pairs of antipodal vectors forms a circuit, and hence ${V}$ has a \CayleyG decomposition of length~$d$.

If $r>d-1$, $V$ cannot be centrally symmetric by Proposition~\ref{prop:Lawrence}. Hence, there is some $v\in {V}$ such that ${V}\cap \lin (v)$ is not centrally symmetric. Since ${V}$ does not have multiple vectors by Corollary~\ref{cor:norepeatedpuredeg1}, then $\lin(v)\cap V=\{v\}$, a configuration consisting of a single vector. Note that $\degG(\{v\})=0$. 
By Proposition~\ref{prop:subspacequotient} we know that $\degG(V/v)\leq \degG(V)-\degG(\{v\})=1$, and by Lemma~\ref{lem:puredeggeq1} that $\degG(V/v)\ge1$. Combining these inequalities we see that $\degG({V}/v)=1$.
Therefore, ${V}/v$ is a vector configuration of dual degree~$1$ that is pure (Lemma~\ref{lem:quotientsofpurearepure}), and 
has rank~$r-1$ and $(r-1)+d+1$ elements. By the induction hypothesis, ${V}/v$ has therefore a weak \CayleyG decomposition with factors $\tilde C_1, \dots, \tilde C_d$, say.
For convenience, we define $\tilde C_0:=\left({V}/v\right)\setminus\bigcup\nolimits_{i=1}^d \tilde C_i$. 

By counting the number of elements in $|{V}/v|$, we see that 
\begin{equation}\label{eq:countingcircuitelements} \sum_{i=0}^d |\tilde C_i|=|V /v|=|V|-1=r+d.
\end{equation} 
After subtracting $2d$ from both sides, $|\tilde C_0|\ge0$ implies that 
\[\sum_{i=1}^d{\big( |\tilde C_i|-2\big)}\leq r-d;
\]
in particular, $|\tilde C_i|\leq r-1$ for all $1\leq i\leq d$ because $d\ge3$ and $|\tilde C_j|\geq 2$ for all $j$. 

For each $1 \le i \le d$, $\tilde C_i$ is a positive circuit of $\cM({V}/v)$ that expands to a circuit $C_i$ of $\cM({V})$ 
(see Section~\ref{sec:delcontr}). From now on, we consider subsets of~${V}/v$ as subsets of~${V} \setminus v$ by identifying corresponding elements, so that $\tilde C_i = C_i \setminus \{v\}$. 

Since $|C_i|\leq |\tilde C_i|+1\leq r$, Proposition~\ref{prop:nosmallcircuits} shows that either $v\notin C_i$ or $v\in C_i^+$. Hence, $C_i$ is again a positive circuit with either $C_i^+=\tilde C_i^+$ or $C_i^+=\tilde C_i^+\cup \{v\}$. We will show that if some $C_i$ contains $v$, no other $C_j$ can. This will prove our claim because then $C_1,\dots,C_i,\dots,C_d$ are disjoint positive circuits that form a weak \CayleyG decomposition of~${V}$.

For this, we assume that $v\in C_1\cap C_2$ and reach a contradiction. We start with some definitions.
For $1\leq i\leq d$, let ${D}_i$ be a subset of $|\tilde C_i|-2$ elements of~$\tilde C_i$, and set ${D}:=\tilde C_0 \cup \bigcup\nolimits_{i=1}^d {D}_i$. Next, choose $v_1\in \tilde C_1 \setminus {D}_1$ and $v_2\in \tilde C_2 \setminus {D}_2$ (so that, in particular, $v\notin\{v_1,v_2\}$) and define ${D}':={D}\cup\{v_1,v_2\}$. 

A first observation is that the elements in ${D}'$ must be linearly independent. Indeed, since 
\begin{eqnarray*}
|{D}'|
&=&
2+|\tilde C_0|+\sum_{i=1}^d \big(|\tilde C_i|-2\big)
\\ &\stackrel{\eqref{eq:countingcircuitelements}}{=}&
2 + (r+d) - 2d 
\ = \ 
r+2-d
\ \leq \ 
r-1,
\end{eqnarray*}
already their projections to ${V}/v$ are linearly independent. The reason for this is that if the elements in $D'/v$ were not linearly independent, then they would contain a circuit. But this contradicts Lemma~\ref{lem:onlysmallcircuits} because $D'\not\supseteq \tilde C_i$ for all $i$, since by construction $|\tilde C_i \setminus D'|\ge1$ for all~$i$. Now, let ${h}'$ be a hyperplane through $\lin({D}')$ that is otherwise in general position with respect to~${V}$. This is possible because the rank of~$V$ is~$r$, and $D'$~has at most $r-1$ elements. Observe that $v\notin\lin({D}')$, because otherwise the vectors in ${D}'$ would form a circuit in ${V}/v$. Therefore, $v\notin {h}'$, and we can orient~${h}'$ so that $v\in {{h}'}^-$. Then $|{{h}'}^+\cap \tilde C_i|=|{{h}'}^+\cap C_i|=1$ for $i=1,2$ because of Lemma~\ref{lem:poscircuithyperplane} and our assumption that $v\in C_1\cap C_2$.
 Moreover, since the elements in ${D}'$ are linearly independent, we can perturb~${h}'$ to a hyperplane~${h}$ through $\lin({D})$ such that $v_1,v_2\in {h}^+$. This yields 
\[
\big|{{h}}^+\cap \tilde C_i\big|
=
\big|({{h}'}^+\cap \tilde C_i)\cup v_i\big|
= 2
\qquad\text{for } i=1,2.
\]

Furthermore, we claim that $|{h}^+\cap \tilde C_j|\geq 1$ for all $j\ge3$. 
If, on the contrary, there existed some $j\ge3$ with $|{h}^+\cap \tilde C_j|=0$,  Lemma~\ref{lem:poscircuithyperplane} would yield~$v\notin C_j$ (\ie $C_j=\tilde C_j$), and moreover $C_j$ would be completely contained in~${h}$. Hence, by construction, $C_j$ would be completely contained in $\lin({D})$. In particular, some $v_j\in \tilde C_j \setminus {D}_j$ would satisfy $v_j\notin {D}$ but $v_j\in \lin ({D})$. Therefore, this element would be part of a circuit in~$\{v_j\}\cup {D}$, distinct from~$C_j$ since $|C_j\cap {D}|=C_j-2$. However, $|C_j\cup {D}|\leq |{D}|+3\leq r$, which would contradict Corollary~\ref{cor:disjointcircuits}.

Finally, let ${h}''$ be any hyperplane such that ${D}\subset {{h}''}^+$. Now
\begin{itemize}
\item $\big|({h}\circ {h}'')^+ \cap \tilde C_0\big|=|\tilde C_0|$;
\item $\big|({h}\circ {h}'')^+ \cap \tilde C_i\big|=|\tilde C_i|$ for $i=1,2$; and
\item $\big|({h}\circ {h}'')^+ \cap \tilde C_j\big|\geq |\tilde C_j|-1$ for $3\le j\leq d$. 
\end{itemize}

Therefore, using \eqref{eq:countingcircuitelements} we see that
\[
 \big|({h}\circ {h}'')^+ \cap {V}\big|\ \geq \  \sum_{i=0}^d |\tilde C_i|-(d-2)=r+2,
\]
which contradicts $\degG({V})=1$.

\end{proof} 

\bibliographystyle{plain}
\bibliography{degree}

\begin{thebibliography}{10}

\bibitem{Avis1993}
David Avis.
\newblock {The $m$-core properly contains the $m$-divisible points in space.}
\newblock {\em Pattern Recognit. Lett.}, 14(9):703--705, 1993.

\bibitem{Bat06}
Victor Batyrev.
\newblock {Lattice polytopes with a given {$h^*$}-polynomial}.
\newblock In {\em {Algebraic and geometric combinatorics}}, volume 423 of {\em
  {Contemp. Math.}}, pages 1--10. AMS, 2006.

\bibitem{BN07}
Victor Batyrev and Benjamin Nill.
\newblock {Multiples of lattice polytopes without interior lattice points}.
\newblock {\em Mosc. Math. J.}, 7(2):195--207, 349, 2007.

\bibitem{BN08}
Victor Batyrev and Benjamin Nill.
\newblock {Combinatorial aspects of mirror symmetry}.
\newblock In {\em {Integer points in polyhedra}}, volume 452 of {\em {Contemp.
  Math.}}, pages 35--66. AMS, 2008.

\bibitem{BayerSturmfels1990}
Margaret Bayer and Bernd Sturmfels.
\newblock {Lawrence polytopes.}
\newblock {\em Can. J. Math.}, 42(1):62--79, 1990.

\bibitem{Bayer1993}
Margaret~M. Bayer.
\newblock {Equidecomposable and weakly neighborly polytopes.}
\newblock {\em Isr. J. Math.}, 81(3):301--320, 1993.

\bibitem{BilleraLee81}
Louis~J. Billera and Carl~W. Lee.
\newblock {A proof of the sufficiency of {M}c{M}ullen's conditions for
  {$f$}-vectors of simplicial convex polytopes}.
\newblock {\em J. Combin. Theory Ser. A}, 31(3):237--255, 1981.

\bibitem{OrientedMatroids1993}
Anders Bj{\"o}rner, Michel {Las Vergnas}, Bernd Sturmfels, Neil White, and
  G{\"u}nter~M. Ziegler.
\newblock {\em {Oriented matroids.}}
\newblock {Encyclopedia of Mathematics and Its Applications. 46. Cambridge:
  Cambridge University Press. 516 p. }, 1993.

\bibitem{BoroczkySantosSerra2013}
K{\'a}roly~J. B{\"o}r{\"o}czky, Francisco Santos, and Oriol Serra.
\newblock {On sumsets and convex hull}.
\newblock Preprint, \texttt{arXiv:1307.6316}, 2013.

\bibitem{Breen1972}
Marilyn Breen.
\newblock {A {H}elly-number for {$k$}-almost-neighborly sets}.
\newblock {\em Israel J. Math.}, 11:347--348, 1972.

\bibitem{DeLoeraRambauSantosBOOK}
Jes{\'u}s~A. {De Loera}, J{\"o}rg Rambau, and Francisco Santos.
\newblock {\em {Triangulations}}, volume~25 of {\em {Algorithms and Computation
  in Mathematics}}.
\newblock Springer-Verlag, Berlin, 2010.
\newblock Structures for algorithms and applications.

\bibitem{DiRHNP11}
Sandra {Di Rocco}, Christian Haase, Benjamin Nill, and Andreas Paffenholz.
\newblock {Polyhedral adjunction theory}.
\newblock Preprint, \texttt{arXiv:1105.2415}, 2011.

\bibitem{DN10}
Alicia Dickenstein and Benjamin Nill.
\newblock {A simple combinatorial criterion for projective toric manifolds with
  dual defect}.
\newblock {\em Math. Res. Lett.}, 17(3):435--448, 2010.

\bibitem{Gruenbaum}
Branko Gr{\"u}nbaum.
\newblock {\em {Convex polytopes. Prepared by V. Kaibel, V. Klee and G. M.
  Ziegler. 2nd ed.}}
\newblock {Graduate Texts in Mathematics 221. Springer. xvi, 466~p.}, 2003.

\bibitem{HNP09}
Christian Haase, Benjamin Nill, and Sam Payne.
\newblock {Cayley decompositions of lattice polytopes and upper bounds for
  {$h^*$}-polynomials}.
\newblock {\em J. Reine Angew. Math.}, 637:207--216, 2009.

\bibitem{HerrmannJoswig2010}
Sven Herrmann and Michael Joswig.
\newblock {Totally splittable polytopes.}
\newblock {\em Discrete Comput. Geom.}, 44(1):149--166, 2010.

\bibitem{HuberRambauSantos2000}
Birkett Huber, J{\"o}rg Rambau, and Francisco Santos.
\newblock {The Cayley trick, lifting subdivisions and the Bohne-Dress theorem
  on zonotopal tilings.}
\newblock {\em J. Eur. Math. Soc. (JEMS)}, 2(2):179--198, 2000.

\bibitem{KalaiAspectsPaper}
Gil Kalai.
\newblock {Some aspects of the combinatorial theory of convex polytopes}.
\newblock In {\em {Polytopes: abstract, convex and computational
  ({S}carborough, {ON}, 1993)}}, volume 440 of {\em {NATO Adv. Sci. Inst. Ser.
  C Math. Phys. Sci.}}, pages 205--229. Kluwer Acad. Publ., Dordrecht, 1994.

\bibitem{Kalai2000}
Gil Kalai.
\newblock {Combinatorics with a geometric flavor}.
\newblock {\em Geom. Funct. Anal.}, (Special Volume, Part II):742--791, 2000.
\newblock GAFA 2000 (Tel Aviv, 1999).

\bibitem{MatousekLecturesDiscreteGeometry}
Ji\v{r}{\'i} Matou\v{s}ek.
\newblock {\em {Lectures on discrete geometry.}}
\newblock {Graduate Texts in Mathematics. 212. New York, NY: Springer. xvi, 481
  p.}, 2002.

\bibitem{McM04}
Peter McMullen.
\newblock {Triangulations of simplicial polytopes}.
\newblock {\em Beitr{\"a}ge Algebra Geom.}, 45(1):37--46, 2004.

\bibitem{McMullenWalkup1971}
Peter McMullen and David~W. Walkup.
\newblock {A generalized lower-bound conjecture for simplicial polytopes.}
\newblock {\em Mathematika, Lond.}, 18:264--273, 1971.

\bibitem{Motzkin1965}
Theodore~S. Motzkin.
\newblock {A combinatorial result on maximally convex sets}.
\newblock {\em Notices of the American Mathematical Society}, 12:603, 1965.
\newblock Abstract 65T-303.

\bibitem{MuraiNevo2012}
Satoshi Murai and Eran Nevo.
\newblock {On the generalized lower bound conjecture for polytopes and
  spheres}.
\newblock Preprint, \texttt{arXiv:1203.1720}, 2012.

\bibitem{Nil08}
Benjamin Nill.
\newblock {Lattice polytopes having $h^*$-polynomials with given degree and
  linear coefficient.}
\newblock {\em Eur. J. Comb.}, 29(7):1596--1602, 2008.

\bibitem{Onn2001}
Shmuel Onn.
\newblock {The {R}adon-split and the {H}elly-core of a point configuration}.
\newblock {\em J. Geom.}, 72(1-2):157--162, 2001.

\bibitem{Padrol2012}
Arnau Padrol.
\newblock {Many neighborly polytopes and oriented matroids}.
\newblock Preprint. \texttt{arXiv:1202.2810}, 2012.

\bibitem{Reay1982}
John~R. Reay.
\newblock {Open problems around {R}adon's theorem}.
\newblock In {\em {Convexity and related combinatorial geometry ({N}orman,
  {O}kla., 1980)}}, volume~76 of {\em {Lecture Notes in Pure and Appl. Math.}},
  pages 151--172. Dekker, New York, 1982.

\bibitem{Ree57}
John~E. Reeve.
\newblock {On the volume of lattice polyhedra}.
\newblock {\em Proc. london Math. Soc. (3)}, 7:378--395, 1957.

\bibitem{Santos2002}
Francisco Santos.
\newblock {Triangulations of oriented matroids.}
\newblock {\em Mem. Am. Math. Soc.}, 741:80 p., 2002.

\bibitem{Sierksma1982}
Gerard Sierksma.
\newblock {Generalizations of {H}elly's theorem;\ open problems}.
\newblock In {\em {Convexity and related combinatorial geometry ({N}orman,
  {O}kla., 1980)}}, volume~76 of {\em {Lecture Notes in Pure and Appl. Math.}},
  pages 173--192. Dekker, New York, 1982.

\bibitem{Sta80}
Richard~P. Stanley.
\newblock {Decompositions of rational convex polytopes.}
\newblock {\em Ann. Discrete Math.}, 6:333--342, 1980.

\bibitem{Stanley1980}
Richard~P. Stanley.
\newblock {The number of faces of a simplicial convex polytope.}
\newblock {\em Adv. Math.}, 35:236--238, 1980.

\bibitem{Sta86}
Richard~P. Stanley.
\newblock {\em {Enumerative combinatorics. {V}ol. {I}}}.
\newblock {The Wadsworth \& Brooks/Cole Mathematics Series}. Wadsworth \&
  Brooks/Cole Advanced Books \& Software, Monterey, CA, 1986.
\newblock With a foreword by Gian-Carlo Rota.

\bibitem{Sta-h-vector}
Richard~P. Stanley.
\newblock {Generalized {$H$}-vectors, intersection cohomology of toric
  varieties, and related results}.
\newblock In {\em {Commutative algebra and combinatorics ({K}yoto, 1985)}},
  volume~11 of {\em {Adv. Stud. Pure Math.}}, pages 187--213. North-Holland,
  Amsterdam, 1987.

\bibitem{Sta-local}
Richard~P. Stanley.
\newblock {Subdivisions and local {$h$}-vectors}.
\newblock {\em J. Amer. Math. Soc.}, 5(4):805--851, 1992.

\bibitem{Sta93}
Richard~P. Stanley.
\newblock {A monotonicity property of {$h$}-vectors and {$h^*$}-vectors}.
\newblock {\em European J. Combin.}, 14(3):251--258, 1993.

\bibitem{ZieglerLecturesOnPolytopes}
G{\"u}nter~M. Ziegler.
\newblock {\em {Lectures on polytopes}}, volume 152 of {\em {Graduate Texts in
  Mathematics}}.
\newblock Springer-Verlag, New York, 1995.

\end{thebibliography}

\end{document}